\documentclass[onefignum,onetabnum]{siamart190516}



\usepackage{lipsum}
\usepackage{amsfonts}
\usepackage{graphicx}
\usepackage{enumerate}
\usepackage{epstopdf}
\usepackage{microtype}
\usepackage{algorithmic}
\ifpdf
  \DeclareGraphicsExtensions{.eps,.pdf,.png,.jpg}
\else
  \DeclareGraphicsExtensions{.eps}
\fi


\newsiamremark{remark}{Remark}
\newsiamremark{hypothesis}{Hypothesis}
\crefname{hypothesis}{Hypothesis}{Hypotheses}
\newsiamthm{claim}{Claim}

\headers{Polygonal Digraphs}{T.~R.~Cameron, H.~T.~Hall, B.~Small, and A.~Wiedemann}

\title{On Digraphs with Polygonal Restricted Numerical Range\thanks{Submitted to the editors DATE.
\funding{This work was partially funded by the AMS-Simons Travel Grant Program.}}}

\author{Thomas R. Cameron\thanks{Department of Mathematics, Penn State Behrend, Erie, PA 
  (\email{trc5475@psu.edu}).}
\and H.~Tracy Hall\thanks{Hall Labs LLC, Provo, UT 
  (\email{H.Tracy@gmail.com}).}
\and Ben Small\thanks{(\email{bentsm@gmail.com}).}
\and Alexander Wiedemann\thanks{Department of Mathematics and Computer Science, Davidson College, Davidson, NC. Present address: Department of Mathematics, Randolph-Macon College, Ashland, VA (\email{alexanderwiedemann@rmc.edu}).}}

\usepackage{amsopn}
\newlength\myindent
\setlength\myindent{1em}

\makeatletter
\newcommand{\oset}[3][0ex]{%
                \mathrel{\mathop{#3}\limits^{
                                                \vbox to#1{\kern-0.7\ex@
                                                                \hbox{$\scriptstyle#2$}\vss}}}}
\makeatother
\newcommand{\djoin}{\oset{\rightarrow}{\vee}}
\newcommand{\bdjoin}{\oset{\leftrightarrow}{\vee}}

\newcommand*\conj[1]{\overline{#1}}
\newcommand{\iu}{{i\mkern1mu}}
\newcommand\abs[1]{\left|#1\right|}
\newcommand\norm[1]{\left\Vert#1\right\Vert}
\newcommand\re[1]{\operatorname{Re}\left(#1\right)}

\newcommand\conv[1]{\operatorname{conv}\left(#1\right)}
\newcommand\spn[1]{\operatorname{span}\left(#1\right)}
\DeclareMathOperator{\diag}{diag}


\input{drawing}

\pgfkeys{/stored graphs/case1/.cd,
  x min/.initial=-0.999999791383765,
  x max/.initial=1.0,
  y min/.initial=-0.866025372082436,
  y max/.initial=0.8660255217554294,
  graph code/.code={
    \node (v0) at (-0.999999791383765, -1.0199322869699905e-07) {};
    \node (v1) at (-0.49999975413086595, -0.866025372082436) {};
    \node (v2) at (0.4999999627471009, -0.866025372082436) {};
    \node (v3) at (1.0, 1.6027507366671279e-07) {};
    \node (v4) at (0.4999996647239081, 0.8660255217554294) {};
    \node (v5) at (-0.500000081956378, 0.866025164127598) {};

    \graph {
      (v0); (v1); (v2); (v3); (v4); (v5);
      v0 -> v1;
      v1 -> v2;
      v2 -> v3;
      v3 -> v4;
      v4 -> v5;
      v5 -> v0;
    };
  }
}

\pgfkeys{/pgf/data/new group=case1-rnr}
\pgfdata[store in group=case1-rnr]{
  x, y
  0.49999999999999944, -0.8660254037844382
  1.4999999999999998, -0.8660254037844384
  2.0000000000000004, 0.0
  1.4999999999999998, 0.8660254037844384
  0.49999999999999944, 0.8660254037844382
}

\pgfkeys{/pgf/data/new group=case1-eig}
\pgfdata[store in group=case1-eig]{
  x, y
  0.49999999999999944, 0.8660254037844382
  0.49999999999999944, -0.8660254037844382
  2.0000000000000004, 0.0
  1.4999999999999998, 0.8660254037844384
  1.4999999999999998, -0.8660254037844384
}

\pgfkeys{/stored graphs/case2/.cd,
  x min/.initial=-1.0,
  x max/.initial=0.6854102419673151,
  y min/.initial=-0.5706339351295915,
  y max/.initial=0.5706338910600649,
  graph code/.code={
    \node (v0) at (0.014589783050400188, 0.5706338910600649) {};
    \node (v1) at (-0.40000000106687733, -2.7521080106309955e-08) {};
    \node (v2) at (0.014589844937823521, -0.5706339351295915) {};
    \node (v3) at (0.6854102419673151, -0.3526711083671352) {};
    \node (v4) at (0.6854101311113385, 0.35267121440446464) {};
    \node (v5) at (-1.0, -3.4446722641255056e-08) {};

    \graph {
      (v0); (v1); (v2); (v3); (v4); (v5);
      v0 -> { v2, v3, v4, v5 };
      v1 -> { v0, v3, v4, v5 };
      v2 -> { v1, v3, v4, v5 };
      v3 <-> v4;
    };
  }
}

\pgfkeys{/pgf/data/new group=case2-rnr}
\pgfdata[store in group=case2-rnr]{
  x, y
  0.0, 0.0
  4.5, -0.8660254037844388
  4.5, 0.8660254037844388
}

\pgfkeys{/pgf/data/new group=case2-eig}
\pgfdata[store in group=case2-eig]{
  x, y
  4.5, 0.8660254037844388
  4.5, -0.8660254037844388
  0.0, 0.0
  3.0, 0.0
  2.0, 0.0
}

\pgfkeys{/stored graphs/case3/.cd,
  x min/.initial=-1.0,
  x max/.initial=1.0,
  y min/.initial=-0.5,
  y max/.initial=0.5,
  graph code/.code={
    \node (v0) at (-1.0, 0.5) {};
    \node (v1) at (-1.0, -0.5) {};
    \node (v2) at (1.0, -0.5) {};
    \node (v3) at (1.0, 0.5) {};
    \node (v4) at (0.0, -0.5) {};
    \node (v5) at (0.0, 0.5) {};

    \graph {
      (v0); (v1); (v2); (v3); (v4); (v5);
      v0 <-> v1;
      v0 -> { v4, v5 };
      v1 -> { v4, v5 };
      v2 -> v4;
      v3 -> v5;
      v4 -> v3;
      v5 -> v2;
    };
  }
}

\pgfkeys{/pgf/data/new group=case3-rnr}
\pgfdata[store in group=case3-rnr]{
  x, y
  3.999999999999999, 0.0
  1.0, -1.0
  1.0, -0.2620444585260786
  0.9999999999999999, 0.9999999999999999
}

\pgfkeys{/pgf/data/new group=case3-eig}
\pgfdata[store in group=case3-eig]{
  x, y
  3.999999999999999, 0.0
  1.0000000000000007, 1.0000000000000002
  1.0000000000000007, -1.0000000000000002
  2.0000000000000018, 2.9074083778446188e-08
  2.0000000000000018, -2.9074083778446188e-08
}

\pgfkeys{/stored graphs/circ-normal/.cd,
  x min/.initial=-0.999999791383765,
  x max/.initial=1.0,
  y min/.initial=-0.866025372082436,
  y max/.initial=0.8660255217554294,
  graph code/.code={
    \node (v0) at (-0.999999791383765, -1.0199322869699905e-07) {};
    \node (v1) at (-0.49999975413086595, -0.866025372082436) {};
    \node (v2) at (0.4999999627471009, -0.866025372082436) {};
    \node (v3) at (1.0, 1.6027507366671279e-07) {};
    \node (v4) at (0.4999996647239081, 0.8660255217554294) {};
    \node (v5) at (-0.500000081956378, 0.866025164127598) {};

    \graph {
      (v0); (v1); (v2); (v3); (v4); (v5);
      v0 -> { v1, v2 };
      v1 -> { v2, v3 };
      v2 -> { v3, v4 };
      v3 -> { v4, v5 };
      v4 -> { v0, v5 };
      v5 -> { v0, v1 };
    };
  }
}

\pgfkeys{/pgf/data/new group=circ-normal-rnr}
\pgfdata[store in group=circ-normal-rnr]{
  x, y
  2.0000000000000004, 0.0
  2.0000000000000027, -1.7320508075688783
  3.0000000000000013, 0.0
  2.0000000000000027, 1.7320508075688783
}

\pgfkeys{/pgf/data/new group=circ-normal-eig}
\pgfdata[store in group=circ-normal-eig]{
  x, y
  2.0000000000000027, 1.7320508075688783
  2.0000000000000027, -1.7320508075688783
  3.0000000000000013, 0.0
  2.0000000000000004, 0.0
  3.000000000000001, 0.0
}

\pgfkeys{/stored graphs/bal-non-normal/.cd,
  x min/.initial=-0.999999791383765,
  x max/.initial=1.0,
  y min/.initial=-0.866025372082436,
  y max/.initial=0.8660255217554294,
  graph code/.code={
    \node (v0) at (-0.999999791383765, -1.0199322869699905e-07) {};
    \node (v1) at (-0.49999975413086595, -0.866025372082436) {};
    \node (v2) at (0.4999999627471009, -0.866025372082436) {};
    \node (v3) at (1.0, 1.6027507366671279e-07) {};
    \node (v4) at (0.4999996647239081, 0.8660255217554294) {};
    \node (v5) at (-0.500000081956378, 0.866025164127598) {};

    \graph {
      (v0); (v1); (v2); (v3); (v4); (v5);
      v0 -> v2;
      v1 -> v2;
      v2 -> { v3, v4 };
      v3 -> v5;
      v4 -> v5;
      v5 -> { v0, v1 };
    };
  }
}

\pgfkeys{/pgf/data/new group=bal-non-normal-rnr}
\pgfdata[store in group=bal-non-normal-rnr]{
  x, y
  3.0000000000000013, 0.0
  2.5304621357072365, -0.4695378642927636
  1.6657594954002393, -1.3342377289963236
  1.6648560135589714, -1.335132936582553
  1.6630600027954376, -1.3368962128403274
  1.6595109955466754, -1.3403171036727366
  1.6525788103951968, -1.3467582013661914
  1.6393303347055812, -1.3581960324808229
  1.6268192298621935, -1.3679674935820958
  1.6149542830414536, -1.3763299395433504
  1.603654056449479, -1.3834907634865332
  1.5928457659077373, -1.3896176848414263
  1.5824641689765526, -1.394846689310353
  1.5724505171694478, -1.3992881692114154
  1.5627515953368412, -1.403031681132485
  1.553318853591903, -1.4061496359965266
  1.5441076275761172, -1.4087001588403598
  1.5350764380120476, -1.4107292964885563
  1.526186358321247, -1.412272706350643
  1.5174004383756947, -1.4133569251801434
  1.5, -1.4142135623730954
  1.4825995616243048, -1.413356925180143
  1.4738136416787533, -1.4122727063506437
  1.464923561987952, -1.4107292964885554
  1.4558923724238835, -1.4087001588403592
  1.4466811464080978, -1.406149635996526
  1.437248404663158, -1.4030316811324863
  1.4275494828305517, -1.399288169211416
  1.4175358310234474, -1.3948466893103528
  1.4071542340922611, -1.3896176848414248
  1.3963459435505226, -1.3834907634865323
  1.3850457169585473, -1.3763299395433513
  1.373180770137807, -1.3679674935820962
  1.3606696652944192, -1.358196032480822
  1.3474211896048065, -1.346758201366192
  1.3333333333333326, -1.3333333333333326
  1.318292569214078, -1.3175201238605179
  1.3021737898626478, -1.2988141994379094
  1.2936681382950797, -1.2881837224414274
  1.284841515284735, -1.2765792617931129
  1.275675933969064, -1.2638946582665764
  1.266153394864861, -1.2500104488099943
  1.2562562469770926, -1.2347922347093307
  1.2459676758865592, -1.218088923865655
  1.2352723526351967, -1.1997308788914167
  1.2241572840060964, -1.1795280277572262
  1.2126129117612874, -1.1572680295028435
  1.200634514692617, -1.1327146373440222
  1.1882239714501044, -1.1056064689450928
  1.1753919415842735, -1.0756564820111374
  1.1688242428093663, -1.0595192035983514
  1.1621605133295807, -1.0425525646077243
  1.1554058152340048, -1.0247140003940964
  1.1485663440025458, -1.0059597822269557
  1.1416495517198526, -0.9862452279931899
  1.13466427536512, -0.9655249693529713
  1.1276208684128064, -0.9437532828377024
  1.1205313333747156, -0.9208844924294417
  1.1134094522187121, -0.896873450900019
  1.1062709108203563, -0.8716761065155709
  1.0991334127570231, -0.8452501605189762
  1.0920167768670699, -0.8175558189729492
  1.0849430121112618, -0.7885566399681251
  1.077936362448229, -0.7582204737756278
  1.0710233137464824, -0.7265204891863732
  1.0642325542964317, -0.6934362740150679
  1.0575948803664128, -0.6589549916162223
  1.0511430385858742, -0.6230725684225068
  1.0449114978562786, -0.585794880254741
  1.038936145092373, -0.5471388978876823
  1.0332539014590736, -0.5071337456476298
  1.0279022589177944, -0.46582162135374455
  1.0229187407844158, -0.4232585224684546
  1.0183402944941513, -0.37951472268695596
  1.014202629631158, -0.33467494609209464
  1.010539519178847, -0.2888381929638178
  1.0073820864638532, -0.24211718259212597
  1.0047581039371782, -0.19463739381939507
  1.002691332296601, -0.14653570287325715
  1.0012009291048933, -0.09795863919749687
  1.000300954726241, -0.04906030187381947
  0.9999999999999998, 0.0
  1.0003009547262416, 0.04906030187381849
  1.0012009291048938, 0.09795863919749598
  1.0026913322965998, 0.14653570287325604
  1.0047581039371782, 0.19463739381939424
  1.0073820864638536, 0.2421171825921251
  1.0105395191788473, 0.2888381929638172
  1.0142026296311584, 0.3346749460920942
  1.0183402944941515, 0.3795147226869552
  1.0229187407844162, 0.42325852246845375
  1.0279022589177937, 0.4658216213537431
  1.0332539014590738, 0.5071337456476289
  1.0389361450923733, 0.5471388978876818
  1.044911497856278, 0.5857948802547399
  1.0511430385858738, 0.6230725684225062
  1.0575948803664124, 0.6589549916162215
  1.0642325542964317, 0.6934362740150675
  1.0710233137464815, 0.726520489186372
  1.077936362448228, 0.7582204737756266
  1.084943012111261, 0.7885566399681238
  1.0920167768670694, 0.8175558189729485
  1.0991334127570234, 0.845250160518976
  1.1062709108203568, 0.8716761065155711
  1.1134094522187117, 0.896873450900018
  1.1205313333747158, 0.9208844924294416
  1.127620868412807, 0.9437532828377022
  1.1346642753651206, 0.9655249693529719
  1.1416495517198535, 0.9862452279931906
  1.1485663440025442, 1.0059597822269548
  1.1554058152340034, 1.0247140003940949
  1.1621605133295811, 1.0425525646077243
  1.1688242428093665, 1.0595192035983518
  1.1753919415842726, 1.0756564820111365
  1.188223971450104, 1.1056064689450926
  1.2006345146926165, 1.1327146373440216
  1.212612911761287, 1.1572680295028432
  1.2241572840060964, 1.1795280277572262
  1.235272352635196, 1.1997308788914158
  1.245967675886558, 1.2180889238656545
  1.2562562469770926, 1.234792234709331
  1.2661533948648618, 1.2500104488099948
  1.2756759339690638, 1.2638946582665762
  1.2848415152847363, 1.2765792617931144
  1.2936681382950788, 1.2881837224414268
  1.3021737898626484, 1.2988141994379099
  1.318292569214076, 1.3175201238605159
  1.3333333333333337, 1.333333333333334
  1.3474211896048045, 1.3467582013661903
  1.3606696652944188, 1.3581960324808218
  1.3731807701378078, 1.3679674935820967
  1.3850457169585462, 1.3763299395433504
  1.3963459435505214, 1.3834907634865314
  1.4071542340922618, 1.3896176848414257
  1.4175358310234483, 1.3948466893103542
  1.4275494828305517, 1.3992881692114159
  1.437248404663158, 1.4030316811324863
  1.4466811464080984, 1.4061496359965266
  1.4558923724238833, 1.408700158840359
  1.4649235619879528, 1.4107292964885563
  1.4738136416787524, 1.4122727063506426
  1.4825995616243057, 1.4133569251801439
  1.5000000000000013, 1.4142135623730967
  1.5174004383756943, 1.4133569251801428
  1.526186358321248, 1.4122727063506437
  1.5350764380120483, 1.4107292964885567
  1.5441076275761165, 1.4087001588403598
  1.5533188535919022, 1.4061496359965262
  1.562751595336841, 1.4030316811324846
  1.572450517169448, 1.3992881692114159
  1.5824641689765526, 1.394846689310353
  1.5928457659077364, 1.3896176848414252
  1.6036540564494766, 1.3834907634865314
  1.6149542830414545, 1.3763299395433521
  1.626819229862194, 1.3679674935820962
  1.6393303347055812, 1.3581960324808229
  1.6525788103951944, 1.346758201366189
  1.6595109955466745, 1.340317103672736
  1.6630600027954385, 1.3368962128403283
  1.6648560135589716, 1.335132936582554
  2.132147586144324, 0.867852413855676
}

\pgfkeys{/pgf/data/new group=bal-non-normal-eig}
\pgfdata[store in group=bal-non-normal-eig]{
  x, y
  0.9999999999999998, 0.0
  3.0000000000000027, 0.0
  1.5000000000000009, 1.3228756555322971
  1.5000000000000009, -1.3228756555322971
  0.9999999999999998, 0.0
}

\pgfkeys{/stored graphs/4-cycle/.cd,
  x min/.initial=-1.0,
  x max/.initial=1.0,
  y min/.initial=-1.0,
  y max/.initial=1.0,
  graph code/.code={
    \node (v0) at (-1.0, 1.0) {};
    \node (v1) at (-1.0, -1.0) {};
    \node (v2) at (1.0, -1.0) {};
    \node (v3) at (1.0, 1.0) {};

    \graph {
      (v0); (v1); (v2); (v3);
      v0 -> v1;
      v1 -> v2;
      v2 -> v3;
      v3 -> v0;
    };
  }
}

\pgfkeys{/pgf/data/new group=4-cycle-rnr}
\pgfdata[store in group=4-cycle-rnr]{
  x, y
  1.0, -0.9999999999999999
  2.0000000000000018, 0.0
  1.0, 0.9999999999999999
}

\pgfkeys{/pgf/data/new group=4-cycle-eig}
\pgfdata[store in group=4-cycle-eig]{
  x, y
  2.0000000000000018, 0.0
  1.0, 0.9999999999999999
  1.0, -0.9999999999999999
}

\pgfkeys{/stored graphs/twin-splitting/.cd,
  x min/.initial=-1.0,
  x max/.initial=1.0,
  y min/.initial=-1.0,
  y max/.initial=1.0,
  graph code/.code={
    \node (v0) at (-1.0, -0.5) {};
    \node (v1) at (-0.5, -1.0) {};
    \node (v2) at (1.0, -1.0) {};
    \node (v3) at (1.0, 0.5) {};
    \node (v4) at (0.5, 1.0) {};
    \node (v5) at (-1.0, 1.0) {};

    \graph {
      (v0); (v1); (v2); (v3); (v4); (v5);
      v0 -> v2;
      v1 -> v2;
      v2 -> { v3, v4 };
      v3 -> v5;
      v4 -> v5;
      v5 -> { v0, v1 };
    };
  }
}

\pgfkeys{/pgf/data/new group=twin-splitting-rnr}
\pgfdata[store in group=twin-splitting-rnr]{
  x, y
  3.0000000000000013, 0.0
  2.5304621357072365, -0.4695378642927636
  1.6657594954002393, -1.3342377289963236
  1.6648560135589714, -1.335132936582553
  1.6630600027954376, -1.3368962128403274
  1.6595109955466754, -1.3403171036727366
  1.6525788103951968, -1.3467582013661914
  1.6393303347055812, -1.3581960324808229
  1.6268192298621935, -1.3679674935820958
  1.6149542830414536, -1.3763299395433504
  1.603654056449479, -1.3834907634865332
  1.5928457659077373, -1.3896176848414263
  1.5824641689765526, -1.394846689310353
  1.5724505171694478, -1.3992881692114154
  1.5627515953368412, -1.403031681132485
  1.553318853591903, -1.4061496359965266
  1.5441076275761172, -1.4087001588403598
  1.5350764380120476, -1.4107292964885563
  1.526186358321247, -1.412272706350643
  1.5174004383756947, -1.4133569251801434
  1.5, -1.4142135623730954
  1.4825995616243048, -1.413356925180143
  1.4738136416787533, -1.4122727063506437
  1.464923561987952, -1.4107292964885554
  1.4558923724238835, -1.4087001588403592
  1.4466811464080978, -1.406149635996526
  1.437248404663158, -1.4030316811324863
  1.4275494828305517, -1.399288169211416
  1.4175358310234474, -1.3948466893103528
  1.4071542340922611, -1.3896176848414248
  1.3963459435505226, -1.3834907634865323
  1.3850457169585473, -1.3763299395433513
  1.373180770137807, -1.3679674935820962
  1.3606696652944192, -1.358196032480822
  1.3474211896048065, -1.346758201366192
  1.3333333333333326, -1.3333333333333326
  1.318292569214078, -1.3175201238605179
  1.3021737898626478, -1.2988141994379094
  1.2936681382950797, -1.2881837224414274
  1.284841515284735, -1.2765792617931129
  1.275675933969064, -1.2638946582665764
  1.266153394864861, -1.2500104488099943
  1.2562562469770926, -1.2347922347093307
  1.2459676758865592, -1.218088923865655
  1.2352723526351967, -1.1997308788914167
  1.2241572840060964, -1.1795280277572262
  1.2126129117612874, -1.1572680295028435
  1.200634514692617, -1.1327146373440222
  1.1882239714501044, -1.1056064689450928
  1.1753919415842735, -1.0756564820111374
  1.1688242428093663, -1.0595192035983514
  1.1621605133295807, -1.0425525646077243
  1.1554058152340048, -1.0247140003940964
  1.1485663440025458, -1.0059597822269557
  1.1416495517198526, -0.9862452279931899
  1.13466427536512, -0.9655249693529713
  1.1276208684128064, -0.9437532828377024
  1.1205313333747156, -0.9208844924294417
  1.1134094522187121, -0.896873450900019
  1.1062709108203563, -0.8716761065155709
  1.0991334127570231, -0.8452501605189762
  1.0920167768670699, -0.8175558189729492
  1.0849430121112618, -0.7885566399681251
  1.077936362448229, -0.7582204737756278
  1.0710233137464824, -0.7265204891863732
  1.0642325542964317, -0.6934362740150679
  1.0575948803664128, -0.6589549916162223
  1.0511430385858742, -0.6230725684225068
  1.0449114978562786, -0.585794880254741
  1.038936145092373, -0.5471388978876823
  1.0332539014590736, -0.5071337456476298
  1.0279022589177944, -0.46582162135374455
  1.0229187407844158, -0.4232585224684546
  1.0183402944941513, -0.37951472268695596
  1.014202629631158, -0.33467494609209464
  1.010539519178847, -0.2888381929638178
  1.0073820864638532, -0.24211718259212597
  1.0047581039371782, -0.19463739381939507
  1.002691332296601, -0.14653570287325715
  1.0012009291048933, -0.09795863919749687
  1.000300954726241, -0.04906030187381947
  0.9999999999999998, 0.0
  1.0003009547262416, 0.04906030187381849
  1.0012009291048938, 0.09795863919749598
  1.0026913322965998, 0.14653570287325604
  1.0047581039371782, 0.19463739381939424
  1.0073820864638536, 0.2421171825921251
  1.0105395191788473, 0.2888381929638172
  1.0142026296311584, 0.3346749460920942
  1.0183402944941515, 0.3795147226869552
  1.0229187407844162, 0.42325852246845375
  1.0279022589177937, 0.4658216213537431
  1.0332539014590738, 0.5071337456476289
  1.0389361450923733, 0.5471388978876818
  1.044911497856278, 0.5857948802547399
  1.0511430385858738, 0.6230725684225062
  1.0575948803664124, 0.6589549916162215
  1.0642325542964317, 0.6934362740150675
  1.0710233137464815, 0.726520489186372
  1.077936362448228, 0.7582204737756266
  1.084943012111261, 0.7885566399681238
  1.0920167768670694, 0.8175558189729485
  1.0991334127570234, 0.845250160518976
  1.1062709108203568, 0.8716761065155711
  1.1134094522187117, 0.896873450900018
  1.1205313333747158, 0.9208844924294416
  1.127620868412807, 0.9437532828377022
  1.1346642753651206, 0.9655249693529719
  1.1416495517198535, 0.9862452279931906
  1.1485663440025442, 1.0059597822269548
  1.1554058152340034, 1.0247140003940949
  1.1621605133295811, 1.0425525646077243
  1.1688242428093665, 1.0595192035983518
  1.1753919415842726, 1.0756564820111365
  1.188223971450104, 1.1056064689450926
  1.2006345146926165, 1.1327146373440216
  1.212612911761287, 1.1572680295028432
  1.2241572840060964, 1.1795280277572262
  1.235272352635196, 1.1997308788914158
  1.245967675886558, 1.2180889238656545
  1.2562562469770926, 1.234792234709331
  1.2661533948648618, 1.2500104488099948
  1.2756759339690638, 1.2638946582665762
  1.2848415152847363, 1.2765792617931144
  1.2936681382950788, 1.2881837224414268
  1.3021737898626484, 1.2988141994379099
  1.318292569214076, 1.3175201238605159
  1.3333333333333337, 1.333333333333334
  1.3474211896048045, 1.3467582013661903
  1.3606696652944188, 1.3581960324808218
  1.3731807701378078, 1.3679674935820967
  1.3850457169585462, 1.3763299395433504
  1.3963459435505214, 1.3834907634865314
  1.4071542340922618, 1.3896176848414257
  1.4175358310234483, 1.3948466893103542
  1.4275494828305517, 1.3992881692114159
  1.437248404663158, 1.4030316811324863
  1.4466811464080984, 1.4061496359965266
  1.4558923724238833, 1.408700158840359
  1.4649235619879528, 1.4107292964885563
  1.4738136416787524, 1.4122727063506426
  1.4825995616243057, 1.4133569251801439
  1.5000000000000013, 1.4142135623730967
  1.5174004383756943, 1.4133569251801428
  1.526186358321248, 1.4122727063506437
  1.5350764380120483, 1.4107292964885567
  1.5441076275761165, 1.4087001588403598
  1.5533188535919022, 1.4061496359965262
  1.562751595336841, 1.4030316811324846
  1.572450517169448, 1.3992881692114159
  1.5824641689765526, 1.394846689310353
  1.5928457659077364, 1.3896176848414252
  1.6036540564494766, 1.3834907634865314
  1.6149542830414545, 1.3763299395433521
  1.626819229862194, 1.3679674935820962
  1.6393303347055812, 1.3581960324808229
  1.6525788103951944, 1.346758201366189
  1.6595109955466745, 1.340317103672736
  1.6630600027954385, 1.3368962128403283
  1.6648560135589716, 1.335132936582554
  2.132147586144324, 0.867852413855676
}

\pgfkeys{/pgf/data/new group=twin-splitting-eig}
\pgfdata[store in group=twin-splitting-eig]{
  x, y
  0.9999999999999998, 0.0
  3.0000000000000027, 0.0
  1.5000000000000009, 1.3228756555322971
  1.5000000000000009, -1.3228756555322971
  0.9999999999999998, 0.0
}

\pgfkeys{/stored graphs/nrml-rest/.cd,
  x min/.initial=-1.0,
  x max/.initial=1.0,
  y min/.initial=-1.0,
  y max/.initial=1.0,
  graph code/.code={
    \node (v0) at (-1.0, -0.5) {};
    \node (v1) at (-0.5, -1.0) {};
    \node (v2) at (1.0, -1.0) {};
    \node (v3) at (1.0, 0.5) {};
    \node (v4) at (0.5, 1.0) {};
    \node (v5) at (-1.0, 1.0) {};
    \node (v6) at (0.0, 0.0) {};

    \graph {
      (v0); (v1); (v2); (v3); (v4); (v5); (v6);
      v0 <-> v6;
      v0 -> v2;
      v1 <-> v6;
      v1 -> v2;
      v2 -> { v3, v4 };
      v3 <-> v6;
      v3 -> v5;
      v4 <-> v6;
      v4 -> v5;
      v5 -> { v0, v1 };
    };
  }
}

\pgfkeys{/pgf/data/new group=nrml-rest-rnr}
\pgfdata[store in group=nrml-rest-rnr]{
  x, y
  1.9999999999999996, 0.0
  2.0000000000000027, -1.4142135623730956
  5.414213562373094, 0.0
  2.0000000000000027, 1.4142135623730956
}

\pgfkeys{/pgf/data/new group=nrml-rest-eig}
\pgfdata[store in group=nrml-rest-eig]{
  x, y
  1.9999999999999996, 0.0
  5.414213562373094, 0.0
  2.0000000000000027, 1.4142135623730956
  2.0000000000000027, -1.4142135623730956
  2.585786437626906, 0.0
  2.0000000000000004, 0.0
}

\pgfkeys{/stored graphs/rnrml-non-djoin/.cd,
  x min/.initial=-1.0,
  x max/.initial=1.0,
  y min/.initial=-1.0,
  y max/.initial=1.0,
  graph code/.code={
    \node (v0) at (0.6, 0.0) {};
    \node (v1) at (0.0, 0.6) {};
    \node (v2) at (-0.6, 0.0) {};
    \node (v3) at (0.0, -0.6) {};
    \node (v4) at (1.0, 1.0) {};
    \node (v5) at (-1.0, 1.0) {};
    \node (v6) at (-1.0, -1.0) {};
    \node (v7) at (1.0, -1.0) {};

    \graph {
      (v0); (v1); (v2); (v3); (v4); (v5); (v6); (v7);
      v0 <-> v4;
      v0 -> { v1, v3 };
      v1 <-> { v4, v5 };
      v2 <-> v6;
      v2 -> { v1, v3 };
      v3 <-> { v6, v7 };
      v5 -> { v4, v6 };
      v7 -> { v4, v6 };
    };
  }
}

\pgfkeys{/pgf/data/new group=rnrml-non-djoin-rnr}
\pgfdata[store in group=rnrml-non-djoin-rnr]{
  x, y
  0.5857864376269055, 0.0
  3.000000000000002, -1.0000000000000009
  4.000000000000001, 0.0
  3.000000000000002, 1.0000000000000009
}

\pgfkeys{/pgf/data/new group=rnrml-non-djoin-eig}
\pgfdata[store in group=rnrml-non-djoin-eig]{
  x, y
  0.5857864376269055, 0.0
  2.0000000000000018, 0.0
  3.000000000000002, 1.0000000000000009
  3.000000000000002, -1.0000000000000009
  4.000000000000001, 0.0
  3.414213562373094, 0.0
  4.0, 0.0
}

\pgfkeys{/stored graphs/non-poly-zero-alpha/.cd,
  x min/.initial=-0.694362780359285,
  x max/.initial=1.0,
  y min/.initial=-0.35806098650698454,
  y max/.initial=0.35806098650698454,
  graph code/.code={
    \node (v0) at (-0.07418295952095352, 0.0) {};
    \node (v1) at (-0.694362780359285, 0.35806098650698454) {};
    \node (v2) at (-0.694362780359285, -0.35806098650698454) {};
    \node (v3) at (0.4629085202395232, 0.0) {};
    \node (v4) at (1.0, 0.0) {};

    \graph {
      (v0); (v1); (v2); (v3); (v4);
      v0 -> { v1, v3 };
      v1 -> v2;
      v2 -> v0;
      v3 -> v4;
    };
  }
}

\pgfkeys{/pgf/data/new group=non-poly-zero-alpha-rnr}
\pgfdata[store in group=non-poly-zero-alpha-rnr]{
  x, y
  2.430073525436771, 0.0
  2.4299516751879, -0.01985972979445733
  2.4295864684279067, -0.03969877833574273
  2.4289789341048116, -0.05949657354679038
  2.4281307771443976, -0.07923276035026675
  2.4270443636489722, -0.09888730581736076
  2.4257227005307658, -0.11844060037857812
  2.424169409922862, -0.1378735539195948
  2.422388698792072, -0.1571676856955242
  2.4203853242478495, -0.17630520712761294
  2.4181645550980453, -0.19526909669315973
  2.415732130244858, -0.21404316627766784
  2.41309421454219, -0.2326121185230972
  2.4102573527487183, -0.2509615948727807
  2.4072284222101086, -0.2690782141775516
  2.404014584889258, -0.2869496018847289
  2.400623239337464, -0.3045644099781841
  2.393338516503404, -0.338984085367139
  2.385436396468724, -0.37226719564939387
  2.3769799081355893, -0.4043599859049579
  2.36803169900735, -0.43522427499676747
  2.358652938301142, -0.46483640008991245
  2.348902408107851, -0.49318591511058285
  2.338835786901492, -0.5202741326014908
  2.3285051190173376, -0.5461125893481356
  2.317958455761469, -0.570721503884714
  2.307239648551195, -0.594128280278828
  2.296388271636591, -0.6163660988464661
  2.2854396510966972, -0.6374726216798743
  2.274424977463972, -0.6574888297193563
  2.2633714810431207, -0.676457998879401
  2.2523026513450195, -0.6944248155118805
  2.241238484718601, -0.7114346261417154
  2.230195746978703, -0.727532812723933
  2.219188240418329, -0.7427642823749795
  2.2082270669466335, -0.757173059344096
  2.1973208811482343, -0.7708019666469138
  2.1864761287924868, -0.7836923850485982
  2.175697267738201, -0.7958840777617009
  2.164986969300003, -0.8074150701565375
  2.154346298998662, -0.8183215748486546
  2.143774876242238, -0.8286379536391268
  2.133271012912351, -0.8383967088762015
  2.1228318310908354, -0.8476284978391312
  2.112453360283537, -0.8563621646910503
  2.102130614501288, -0.8646247853932595
  2.0918576494599517, -0.8724417217127923
  2.081627599971581, -0.8798366810880742
  2.071432697322278, -0.8868317796470095
  2.051112699212756, -0.8997032845832389
  2.030816759223388, -0.9112037273752982
  2.010446899916843, -0.9214589728611737
  1.9898841790458652, -0.9305747171894923
  1.968982239432347, -0.9386373573210276
  1.9475571284636475, -0.9457139754747145
  1.9253713991246908, -0.9518509522502479
  1.9021090695243394, -0.9570703392467546
  1.8899445352055546, -0.9593344532221296
  1.8773354464954748, -0.9613623627823138
  1.8641981531115315, -0.9631455139313781
  1.8504310956493146, -0.9646709064768207
  1.8359098236979263, -0.9659197668352602
  1.8204804119746467, -0.9668656959821542
  1.8039507149308907, -0.9674720713065189
  1.7860787094962078, -0.9676883834014045
  1.7665569426170475, -0.967445050618816
  1.7449918515701646, -0.9666460656934188
  1.7208765508439017, -0.965158589769243
  1.6935558180402477, -0.9627983538894445
  1.6621830228547503, -0.9593095851144635
  1.644648285298056, -0.9570361282830789
  1.6256718316557977, -0.9543385015476404
  1.6050732346950205, -0.9511518562398047
  1.582653019872541, -0.9474010672145528
  1.5581936652656332, -0.9429998703994028
  1.5314624501635257, -0.9378504018741867
  1.502216969326804, -0.9318434469882844
  1.4702142782527847, -0.9248598162330313
  1.4352246774343498, -0.9167733656016228
  1.3970509599166938, -0.907456231883395
  1.355553377215366, -0.8967867907466263
  1.3106794667108916, -0.88466057949522
  1.2869917874522296, -0.8780265356829442
  1.2624961698861448, -0.8710038745429904
  1.2372245349927895, -0.8635906630140562
  1.2112195453587509, -0.8557887571530303
  1.1845349199789172, -0.8476041888994339
  1.1572354310524173, -0.8390474666021877
  1.1293965371549342, -0.8301337620925575
  1.1011036242201109, -0.8208829603341488
  1.0724508482165835, -0.8113195536127304
  1.0435395996813535, -0.8014723706522535
  1.0144766379579093, -0.7913741413789182
  0.9853719691191423, -0.7810609093645464
  0.9563365630184533, -0.7705713150731887
  0.9274800189631003, -0.7599457826396026
  0.8989082942569199, -0.7492256498809697
  0.8707216046465556, -0.7384522847637338
  0.8430125912059365, -0.7276662312612145
  0.8158648263090549, -0.7169064236077052
  0.7893517048388157, -0.7062095010142426
  0.7635357388004497, -0.6956092459312864
  0.7384682470503571, -0.6851361590521037
  0.7141894093436186, -0.6748171745449134
  0.6907286369377337, -0.6646755103935738
  0.6681052012429614, -0.6547306418457505
  0.6254017996768044, -0.6354910438356783
  0.586064722947441, -0.6171843602155108
  0.5499789378909969, -0.5998490342853695
  0.5169608070162465, -0.5834857693913862
  0.4867858129393361, -0.568067436985926
  0.4592105150861321, -0.5535479668828397
  0.43398862121280496, -0.5398696837123684
  0.4108818681772194, -0.5269689752732518
  0.3896667386296879, -0.514780426947377
  0.3701380683215735, -0.5032396707436175
  0.3521104700086797, -0.4922852249879152
  0.3354183141300601, -0.4818595804197725
  0.3199148202030469, -0.47190974751353215
  0.3054706524226368, -0.46238743430510115
  0.2793203187941722, -0.44445515137446423
  0.25621883995355244, -0.42776465837020994
  0.2355940078996365, -0.41207827071638303
  0.21700771068464486, -0.39720780797064814
  0.20012276237774976, -0.38300460203164427
  0.18467769688352315, -0.3693510896946296
  0.17046793774795677, -0.35615403716971294
  0.1573318340212989, -0.3433392013300647
  0.14514032126453846, -0.33084717078528314
  0.13378925213045045, -0.3186301422150402
  0.12319368424875947, -0.3066494245124466
  0.11328360313222785, -0.29487350410273844
  0.10400069996148298, -0.28327654133459845
  0.09529592814036085, -0.2718371978731027
  0.08712763783717022, -0.2605377186845247
  0.07946014202804959, -0.24936321042736778
  0.07226260668986607, -0.238301071932808
  0.06550818605016748, -0.22734054295160872
  0.053237319993102376, -0.20568839628813282
  0.04249008233184528, -0.18434556126656376
  0.03314168637103181, -0.1632636316524057
  0.025091892661556602, -0.14240261186213654
  0.018259643840396154, -0.12172804773904916
  0.012579447491066806, -0.10120913844681492
  0.007998894575537825, -0.08081748672268553
  0.0044769372561046985, -0.06052626717562327
  0.001982691936097942, -0.040309668715717006
  0.0004946207418417591, -0.02014251535558226
  -5.551115123125783e-17, -5.024191996501964e-17
  0.0004946207418417814, 0.02014251535558216
  0.0019826919360979318, 0.04030966871571688
  0.004476937256104714, 0.060526267175623094
  0.007998894575537846, 0.0808174867226855
  0.012579447491066776, 0.10120913844681481
  0.018259643840396106, 0.12172804773904906
  0.025091892661556467, 0.14240261186213615
  0.03314168637103185, 0.16326363165240582
  0.04249008233184518, 0.18434556126656365
  0.053237319993102244, 0.20568839628813262
  0.06550818605016741, 0.22734054295160852
  0.07226260668986596, 0.23830107193280797
  0.07946014202804941, 0.24936321042736742
  0.08712763783717001, 0.2605377186845242
  0.09529592814036081, 0.2718371978731026
  0.10400069996148283, 0.283276541334598
  0.11328360313222767, 0.2948735041027384
  0.12319368424875951, 0.30664942451244653
  0.1337892521304503, 0.31863014221504
  0.14514032126453824, 0.330847170785283
  0.15733183402129877, 0.34333920133006457
  0.17046793774795635, 0.3561540371697124
  0.18467769688352298, 0.3693510896946295
  0.20012276237774942, 0.38300460203164405
  0.2170077106846442, 0.39720780797064753
  0.2355940078996356, 0.4120782707163823
  0.2562188399535522, 0.4277646583702095
  0.2793203187941723, 0.4444551513744643
  0.3054706524226367, 0.4623874343051012
  0.3199148202030457, 0.4719097475135313
  0.3354183141300601, 0.4818595804197726
  0.35211047000867846, 0.49228522498791444
  0.37013806832157264, 0.5032396707436169
  0.3896667386296864, 0.5147804269473759
  0.4108818681772187, 0.5269689752732515
  0.4339886212128049, 0.5398696837123683
  0.45921051508613087, 0.5535479668828389
  0.48678581293933604, 0.5680674369859259
  0.5169608070162461, 0.5834857693913861
  0.5499789378909969, 0.5998490342853696
  0.5860647229474392, 0.6171843602155098
  0.625401799676804, 0.6354910438356781
  0.668105201242959, 0.6547306418457495
  0.6907286369377337, 0.6646755103935738
  0.7141894093436182, 0.6748171745449132
  0.7384682470503557, 0.6851361590521032
  0.7635357388004461, 0.6956092459312846
  0.7893517048388157, 0.7062095010142426
  0.8158648263090528, 0.7169064236077041
  0.8430125912059355, 0.7276662312612141
  0.8707216046465582, 0.7384522847637351
  0.8989082942569199, 0.7492256498809697
  0.9274800189630998, 0.7599457826396023
  0.9563365630184512, 0.7705713150731879
  0.9853719691191423, 0.7810609093645464
  1.0144766379579078, 0.7913741413789173
  1.0435395996813526, 0.8014723706522537
  1.072450848216584, 0.8113195536127306
  1.1011036242201109, 0.8208829603341488
  1.129396537154935, 0.8301337620925581
  1.1572354310524162, 0.8390474666021872
  1.1845349199789161, 0.8476041888994332
  1.2112195453587484, 0.8557887571530294
  1.237224534992786, 0.8635906630140546
  1.2624961698861423, 0.87100387454299
  1.2869917874522296, 0.8780265356829442
  1.3106794667108925, 0.8846605794952205
  1.3555533772153678, 0.8967867907466269
  1.3970509599166936, 0.9074562318833952
  1.4352246774343502, 0.9167733656016229
  1.470214278252784, 0.9248598162330317
  1.502216969326804, 0.9318434469882841
  1.5314624501635254, 0.9378504018741869
  1.5581936652656332, 0.9429998703994029
  1.5826530198725395, 0.9474010672145521
  1.6050732346950194, 0.9511518562398042
  1.6256718316557972, 0.9543385015476408
  1.644648285298056, 0.957036128283079
  1.6621830228547505, 0.9593095851144633
  1.693555818040249, 0.9627983538894453
  1.7208765508439026, 0.9651585897692434
  1.7449918515701635, 0.9666460656934182
  1.7665569426170453, 0.967445050618815
  1.7860787094962083, 0.9676883834014047
  1.8039507149308907, 0.967472071306519
  1.8204804119746465, 0.9668656959821542
  1.8359098236979263, 0.9659197668352604
  1.8504310956493137, 0.9646709064768201
  1.8641981531115326, 0.9631455139313788
  1.877335446495475, 0.961362362782314
  1.889944535205555, 0.9593344532221296
  1.9021090695243394, 0.9570703392467547
  1.9253713991246897, 0.9518509522502475
  1.9475571284636464, 0.9457139754747139
  1.9689822394323468, 0.9386373573210278
  1.9898841790458652, 0.9305747171894923
  2.010446899916843, 0.9214589728611737
  2.030816759223387, 0.911203727375298
  2.0511126992127564, 0.8997032845832391
  2.071432697322278, 0.8868317796470098
  2.081627599971581, 0.8798366810880744
  2.0918576494599526, 0.872441721712793
  2.102130614501289, 0.8646247853932598
  2.112453360283537, 0.8563621646910503
  2.122831831090834, 0.8476284978391307
  2.133271012912349, 0.838396708876201
  2.1437748762422397, 0.8286379536391277
  2.1543462989986635, 0.8183215748486552
  2.1649869693000032, 0.8074150701565377
  2.175697267738201, 0.7958840777617012
  2.186476128792487, 0.7836923850485986
  2.197320881148235, 0.7708019666469146
  2.2082270669466326, 0.7571730593440962
  2.2191882404183283, 0.742764282374979
  2.230195746978704, 0.7275328127239333
  2.241238484718601, 0.7114346261417154
  2.2523026513450217, 0.6944248155118808
  2.263371481043124, 0.6764579988794023
  2.2744249774639735, 0.657488829719357
  2.2854396510966963, 0.6374726216798742
  2.2963882716365918, 0.6163660988464671
  2.307239648551196, 0.594128280278829
  2.3179584557614685, 0.5707215038847149
  2.3285051190173367, 0.5461125893481363
  2.338835786901493, 0.520274132601491
  2.3489024081078496, 0.49318591511058285
  2.358652938301142, 0.46483640008991245
  2.36803169900735, 0.43522427499676797
  2.3769799081355893, 0.4043599859049584
  2.385436396468726, 0.3722671956493947
  2.393338516503406, 0.3389840853671399
  2.400623239337464, 0.3045644099781848
  2.404014584889258, 0.28694960188472896
  2.4072284222101077, 0.2690782141775526
  2.4102573527487183, 0.2509615948727809
  2.4130942145421868, 0.23261211852309796
  2.4157321302448582, 0.21404316627766817
  2.418164555098045, 0.1952690966931596
  2.4203853242478486, 0.17630520712761336
  2.422388698792072, 0.15716768569552403
  2.4241694099228632, 0.13787355391959558
  2.4257227005307658, 0.11844060037857812
  2.4270443636489736, 0.09888730581736153
  2.4281307771443976, 0.07923276035026691
  2.428978934104812, 0.05949657354679121
  2.4295864684279067, 0.03969877833574299
  2.429951675187899, 0.019859729794458363
}

\pgfkeys{/pgf/data/new group=non-poly-zero-alpha-eig}
\pgfdata[store in group=non-poly-zero-alpha-eig]{
  x, y
  1.8774388331233465, 0.7448617666197445
  1.8774388331233465, -0.7448617666197445
  0.24512233375330733, 0.0
  1.0, 0.0
}

\tikzset{/my/graphic options,defaults/.style={wide,node outline size=0.8,edge size=0.9,sep=0.7cm,graph width=5cm,rnr width=4cm,graph height=3.5cm,rnr height=4cm},tall preset/.style={/my/graphic options/defaults/.append style={tall,graph width=3.3cm,rnr width=3.6cm,graph height=2.6cm,rnr height=3.2cm}}}

\ifpdf
\hypersetup{
  pdftitle={On Digraphs with Polygonal Restricted Numerical Range},
  pdfauthor={Thomas R. Cameron, H. Tracy Hall, Ben Small, and Alexander Wiedemann}
}
\fi




\begin{document}

\maketitle

\begin{abstract}
In 2020, Cameron et al. introduced the restricted numerical range of a digraph (directed graph) as a tool for characterizing digraphs and studying their algebraic connectivity. 
In particular, digraphs with a restricted numerical range of a single point, a horizontal line segment, and a vertical line segment were characterized as $k$-imploding stars, directed joins of bidirectional digraphs, and regular tournaments, respectively. 
In this article, we extend these results by investigating digraphs whose restricted numerical range is a convex polygon in the complex plane. 
We provide computational methods for identifying these polygonal digraphs and show that these digraphs can be broken into three disjoint classes: normal, restricted-normal, and pseudo-normal digraphs, all of which are closed under the digraph complement. 
We prove sufficient conditions for normal digraphs and show that the directed join of two normal digraphs results in a restricted-normal digraph.
Also, we prove that directed joins are the only restricted-normal digraphs when the order is square-free or twice a square-free number.
Finally, we provide methods to construct restricted-normal digraphs that are not directed joins for all orders that are neither square-free nor twice a square-free number.
\end{abstract}
\begin{keywords}
numerical range; directed graph; Laplacian; algebraic connectivity
\end{keywords}
\begin{AMS}
05C20, 05C50, 15A18, 15A60, 51-08
\end{AMS}

\section{Introduction}	\label{sec:intro}
Let $\mathbb{G}$ denote the set of finite simple unweighted digraphs.
For each $\Gamma\in\mathbb{G}$, we have $\Gamma=(V,E)$, where $V$ is the vertex set, $E\subseteq V\times V$ is the edge set, and $(i,j)\in E$ if and only if $i\neq j$ and there is an edge from vertex $i$ to vertex $j$.
The \emph{order} of $\Gamma$ is equal to $\abs{V}$; when $\abs{V}=0$, we refer to $\Gamma$ as the \emph{null digraph}. 

Given $\Gamma\in\mathbb{G}$ of order $n$, we denote the \emph{out-degree} of the vertex $i\in V$ by $d^{+}(i)$, which is equal to the number of edges of the form $(i,j)\in E$. 
Similarly, we denote the \emph{in-degree} of the vertex $i\in V$ by $d^{-}(i)$, which is equal to the number of edges of the form $(j,i)\in E$.
After indexing the vertex set as $V=\{1,2,\ldots,n\}$, we define the \emph{Laplacian} matrix of $\Gamma$ by $L=[l_{ij}]_{i,j=1}^{n}$, where
\[
l_{ij} = \begin{cases} d^{+}(i) & \text{if $i=j$} \\ -a_{ij} & \text{if $i\neq j$} \end{cases}
\]
and $A = [a_{ij}]_{i,j=1}^{n}$ is the \emph{adjacency matrix} of $\Gamma$, that is, $a_{ij} = 1$ if $(i,j)\in E$ and $a_{ij} = 0$ otherwise. We use subscript notation to indicate the particular digraph when it is unclear from the surrounding context, for example, $d_{\Gamma}^{+}(i)$ denotes the out-degree of vertex $i$ in digraph $\Gamma$.

In general, the \emph{numerical range} (or \emph{field of values}) of a complex matrix $A\in\mathbb{C}^{n\times n}$ is defined as follows~\cite{Kippenhahn1951,Zachlin2008}:
\[
W(A) = \left\{\mathbf{x}^{*}A\mathbf{x}\colon~\mathbf{x}\in\mathbb{C}^{n},~\norm{\mathbf{x}}=1\right\},
\]
where $\norm{\cdot}$ denotes the Euclidean norm on complex vectors.
For our purposes, we are interested in the \emph{restricted numerical range} of the Laplacian matrix, which we define by
\[
W_{r}(L) = \left\{ \mathbf{x}^{*}L\mathbf{x}\colon~\mathbf{x}\in\mathbb{C}^{n},~\mathbf{x}\perp\mathbf{e},~\norm{\mathbf{x}}=1\right\},
\]
where $\mathbf{e}$ is the all ones vector of dimension $n$.
Clearly $W_{r}(L)=\emptyset$ when $n=1$.
If the dimension is unclear from context, we use the notation $\mathbf{e}^{n}$. 
When convenient, we may refer to $W_{r}(L)$ as the restricted numerical range of a digraph, and mix the notation $W_{r}(L)$ with $W_{r}(\Gamma)$.

The definition of the restricted numerical range is motivated by its close connection to the algebraic connectivity for digraphs as defined in~\cite{Wu2005}.
Indeed, let $\Gamma\in\mathbb{G}$ have order $n$ and let $L$ be the Laplacian matrix of $\Gamma$. 
Then, the \emph{algebraic connectivity} of $\Gamma$ is defined by
\[
\alpha(\Gamma) = \min_{\mathbf{x}\in\mathcal{S}}\mathbf{x}^{T}L\mathbf{x},
\]
where
\[
\mathcal{S} = \left\{\mathbf{x}\in\mathbb{R}^{n}\colon~\mathbf{x}\perp\mathbf{e},~\norm{\mathbf{x}}=1\right\}.
\]
Another related and useful quantity is
\[
\beta(\Gamma) = \max_{\mathbf{x}\in\mathcal{S}}\mathbf{x}^{T}L\mathbf{x}.
\]
We summarize this connection and other basic properties below.
Note thate we define a \emph{restrictor matrix} of order $n$ as an $n\times(n-1)$ orthonormal matrix whose columns are orthogonal to $\mathbf{e}$.
If $n=1$, then the restrictor matrix is an \emph{empty matrix}, that is, a matrix where the number of columns is zero.
Also, we use $\conj{\Gamma}$ to denote the \emph{complement digraph} of $\Gamma$, that is, the digraph whose edge set consists exactly of those directed edges not in $\Gamma$.
\begin{proposition}\label{prop:basic-rnr}
Let $\Gamma\in\mathbb{G}$ have order $n$ and let $L$ be the Laplacian matrix of $\Gamma$.
Also, let $Q$ be a restrictor matrix of order $n$.
Then, the following properties hold.
\begin{enumerate}[(i)]
\item The restricted numerical range satisfies $W_{r}(L)=W(Q^{*}LQ)$.
\item	The set $W_{r}(L)$ is invariant under re-ordering of the vertices of $\Gamma$.
\item	The eigenvalues of $L$ are contained in $W_{r}(L)$, except (possibly) for the zero eigenvalue associated with the eigenvector $\mathbf{e}$.
\item	The minimum real part of $W_{r}(L)$ is equal to $\alpha(\Gamma)$ and the maximum real part of $W_{r}(L)$ is equal to $\beta(\Gamma)$.
\item	Let $\conj{L}$ denote the Laplacian matrix of the complement digraph. Then, $W_{r}(\conj{L}) = n - W_{r}(L)$.
\end{enumerate}
\end{proposition}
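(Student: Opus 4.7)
The plan is to handle the five items in order, each drawing on the standard identification of $\mathbf{e}^\perp$ with $\mathbb{C}^{n-1}$ via the restrictor matrix, together with the fact that $L\mathbf{e}=\mathbf{0}$.

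For (i), I would observe that the columns of $Q$ form an orthonormal basis for $\mathbf{e}^\perp$, so every unit vector $\mathbf{x}\in\mathbb{C}^n$ with $\mathbf{x}\perp\mathbf{e}$ is of the form $\mathbf{x}=Q\mathbf{y}$ for a unique unit $\mathbf{y}\in\mathbb{C}^{n-1}$, and conversely. Substituting gives $\mathbf{x}^{*}L\mathbf{x}=\mathbf{y}^{*}(Q^{*}LQ)\mathbf{y}$, which matches $W(Q^{*}LQ)$ as a set. Item (ii) follows because relabeling vertices replaces $L$ by $PLP^{T}$ for a permutation matrix $P$ that fixes $\mathbf{e}$, hence preserves $\mathbf{e}^\perp$ and the quadratic form via $\mathbf{x}\mapsto P\mathbf{x}$.

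The key step for (iii) is a decomposition argument. Given an eigenpair $L\mathbf{v}=\lambda\mathbf{v}$, write $\mathbf{v}=c\mathbf{e}+\mathbf{w}$ with $\mathbf{w}\perp\mathbf{e}$. Using $L\mathbf{e}=\mathbf{0}$, one gets $L\mathbf{w}=\lambda c\mathbf{e}+\lambda\mathbf{w}$; taking the inner product with $\mathbf{w}$ and dividing by $\|\mathbf{w}\|^{2}$ yields $(\mathbf{w}/\|\mathbf{w}\|)^{*}L(\mathbf{w}/\|\mathbf{w}\|)=\lambda$, so $\lambda\in W_{r}(L)$, provided $\mathbf{w}\neq\mathbf{0}$. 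The only way $\mathbf{w}=\mathbf{0}$ is for $\mathbf{v}$ to be a nonzero scalar multiple of $\mathbf{e}$, which forces $\lambda=0$; this is precisely the excluded eigenvalue.

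For (iv), I would pass to the Hermitian part $H=(L+L^{T})/2$. Since $\operatorname{Re}(\mathbf{x}^{*}L\mathbf{x})=\mathbf{x}^{*}H\mathbf{x}$, the real parts of points in $W_{r}(L)$ coincide with $\{\mathbf{x}^{*}H\mathbf{x}:\mathbf{x}\in\mathbb{C}^{n},\;\mathbf{x}\perp\mathbf{e},\;\|\mathbf{x}\|=1\}$. Because $H$ is real symmetric and $\mathbf{e}^\perp$ is a real subspace, the extrema of this Rayleigh quotient over complex unit vectors in $\mathbf{e}^\perp$ are attained on real unit vectors in $\mathbf{e}^\perp$; and for real $\mathbf{x}$, $\mathbf{x}^{T}H\mathbf{x}=\mathbf{x}^{T}L\mathbf{x}$. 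This matches the definitions of $\alpha(\Gamma)$ and $\beta(\Gamma)$.

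Item (v) reduces to the identity $\overline{L}=nI-J-L$, where $J=\mathbf{e}\mathbf{e}^{T}$; this follows from $d_{\overline{\Gamma}}^{+}(i)=(n-1)-d_{\Gamma}^{+}(i)$ and $\overline{A}=J-I-A$. For any unit $\mathbf{x}\perp\mathbf{e}$ we have $J\mathbf{x}=\mathbf{e}(\mathbf{e}^{T}\mathbf{x})=\mathbf{0}$, so $\mathbf{x}^{*}\overline{L}\mathbf{x}=n-\mathbf{x}^{*}L\mathbf{x}$, giving $W_{r}(\overline{L})=n-W_{r}(L)$. The only subtle point in the whole proposition is the decomposition in (iii), since the Laplacian is not normal in general and eigenvectors need not automatically lie in $\mathbf{e}^\perp$; everything else is a direct computation.
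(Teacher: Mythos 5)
Your proofs are all correct. Note, though, that the paper itself does not prove items (i)--(iv): it simply cites \cite[Proposition 2.1]{Cameron2020_RNR}, so the only part where a direct comparison is possible is (v). There both arguments hinge on the same identity $\conj{L} = \left(nI - \mathbf{e}\mathbf{e}^{T}\right) - L$; the paper then conjugates by $Q$ and invokes the known affine-covariance property $W(nI - B) = n - W(B)$ of the numerical range, whereas you evaluate the quadratic form directly on unit vectors $\mathbf{x}\perp\mathbf{e}$, using $\mathbf{e}\mathbf{e}^{T}\mathbf{x}=\mathbf{0}$ --- an essentially equivalent but slightly more self-contained route. Your arguments for (i)--(iv) are the standard ones and are sound: the bijection $\mathbf{x}=Q\mathbf{y}$ for (i), permutation similarity fixing $\mathbf{e}$ for (ii), the decomposition $\mathbf{v}=c\mathbf{e}+\mathbf{w}$ together with $L\mathbf{e}=\mathbf{0}$ for (iii) (correctly identifying that $\mathbf{w}=\mathbf{0}$ forces $\lambda=0$, which is exactly the permitted exception), and the passage to the Hermitian part for (iv), where your claim that the extrema of $\mathbf{x}^{*}H\mathbf{x}$ over complex unit vectors in $\mathbf{e}^{\perp}$ are attained at real vectors is justified because $Q$ may be taken real and $Q^{*}HQ$ is then real symmetric (or, equivalently, by splitting $\mathbf{x}$ into real and imaginary parts and noting the value is a convex combination of values at real unit vectors). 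No gaps.
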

\begin{proof}
For (i)--(iv), see~\cite[Proposition 2.1]{Cameron2020_RNR}.
For (v), note that $\conj{L} = \left(nI - \mathbf{e}\mathbf{e}^{T}\right) - L$.
Therefore, 
\begin{align*}
W_{r}(\conj{L}) &= W(Q^{*}\conj{L}Q) \\
&= W(nI - Q^{*}LQ) \\
&= n - W(Q^{*}LQ),
\end{align*}
by~\cite[Theorem 4]{Kippenhahn1951,Zachlin2008}.
\end{proof}

Next, we summarize the known characterizations of digraphs using the restricted numerical range.
Note that the \emph{directed join} of digraphs $\Gamma = (V,E)$ and $\Gamma'=(V',E')$, where $V\cap V'=\emptyset$, is defined by
\[
\Gamma\djoin\Gamma' = \left(V\sqcup V',E\sqcup E'\sqcup\left\{(i,j)\colon~i\in V,j\in V'\right\}\right),
\]
and $E_{n}$ and $K_{n}$ denote the empty and complete digraph of order $n$, respectively.
\begin{theorem}\label{thm:rnr_char}
Let $\Gamma\in\mathbb{G}$ have order $n$  and let $L$ be the Laplacian matrix of $\Gamma$.
Then, the following characterizations hold:
\begin{enumerate}[(i)]
\item $\Gamma$ is a dicycle (directed cycle) of order $n$ if and only if $W_{r}(L)$ is a complex polygon with vertices
\[
\left\{1-e^{\iu 2\pi j/n}\colon j=1,\ldots,n-1\right\},
\]
where $\iu$ denotes the imaginary unit.
\item $\Gamma$ is a $k$-imploding star, that is, $\Gamma = E_{n-k}\djoin K_{k}$ for some $k\in\{0,1,\ldots,n\}$, if and only if $W_{r}(L)$ is a single point.
Moreover, the numerical value of this point is $k$. 
\item $\Gamma$ is a regular tournament if and only if $W_{r}(L)$ is a vertical line segment. Moreover, $n$ is odd and this vertical line segment has real part $n/2$.
\item $\Gamma$ is a directed join of two bidirectional digraphs, where one may be the null digraph, if and only if $W_{r}(L)$ is a horizontal line segment. Moreover, this line segment lies on the non-negative portion of the real axis. 
\end{enumerate}
\end{theorem}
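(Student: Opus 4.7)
The plan is to handle each of the four biconditionals in parallel. For each, I would establish the forward direction by direct computation on the Laplacian and the reverse direction by exploiting the rigidity that the shape of the numerical range imposes on $Q^{*}LQ$ via Proposition~\ref{prop:basic-rnr}(i). These characterizations are established in~\cite{Cameron2020_RNR}, so I would ultimately cite that source while providing the following uniform outline.

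The key tool for the forward directions is the identity $\mathbf{x}^{*}L\mathbf{x} = \mathbf{x}^{*}(L + \mathbf{e}\mathbf{c}^{T} + \mathbf{v}\mathbf{e}^{T})\mathbf{x}$ for $\mathbf{x}\perp\mathbf{e}$, which allows modifying $L$ by rank-one corrections in the $\mathbf{e}$ direction. For (ii), a $k$-imploding star admits a $\mathbf{c}$ making $L + \mathbf{e}\mathbf{c}^{T}$ equal $kI$ on $\mathbf{e}^{\perp}$, giving $W_{r}(L) = \{k\}$ at once. For (i), the dicycle Laplacian $I_{n} - P$ (with $P$ the cyclic shift) is circulant, hence normal; its spectrum is $\{1 - e^{\iu 2\pi j/n}\}_{j=0}^{n-1}$ with the $j=0$ eigenvalue paired to $\mathbf{e}$, so $Q^{*}LQ$ is normal with the remaining eigenvalues as spectrum, and its numerical range is the claimed polygon. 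For (iii), a regular tournament satisfies $A + A^{T} = J - I$, so the Hermitian part of $L$ equals $(n/2)I - J/2$; restricting to $\mathbf{e}^{\perp}$ annihilates $J$ and forces $\re{z} = n/2$ for every $z \in W_{r}(L)$. For (iv), the directed join of two bidirectional digraphs has block-triangular Laplacian with symmetric diagonal blocks and a $-J$ off-diagonal block; splitting $\mathbf{x} = (\mathbf{x}_{1},\mathbf{x}_{2})$, the condition $\mathbf{e}^{T}\mathbf{x}_{1} + \mathbf{e}^{T}\mathbf{x}_{2} = 0$ kills the antisymmetric contribution to $\mathbf{x}^{*}L\mathbf{x}$, leaving a real symmetric compression.

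For the reverse directions, the shape of $W_{r}(L)$ forces a specific form on $Q^{*}LQ$: a single point gives $Q^{*}LQ = kI$ (vanishing numerical radius); a vertical (resp.\ horizontal) line segment forces the Hermitian part to be scalar (resp.\ the antisymmetric part to vanish); and an $(n-1)$-vertex polygon matching the prescribed set forces normality with that spectrum, via the principle that corners of the numerical range are eigenvalues together with the characterization $A$ is normal iff $W(A) = \conv{\sigma(A)}$. Lifting this back to $L$ uses the decomposition $L = kI + \mathbf{e}\mathbf{c}^{T} + \mathbf{v}\mathbf{e}^{T}/n$ together with the row-sum constraint $L\mathbf{e} = 0$ (which forces $\mathbf{v} = -k\mathbf{e}$), reducing the question to identifying $\mathbf{c}$ subject to the Laplacian integrality constraints $L_{ii} \in \mathbb{Z}_{\geq 0}$ and $L_{ij} \in \{0,-1\}$ for $i \neq j$.

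The main obstacle lies in the reverse directions of (iii) and (iv): there the numerical-range constraint leaves a nontrivial family of candidate matrices, and recovering the exact combinatorial class requires combining it with Laplacian integrality. For (iv) this amounts to showing that the relation $(L - L^{T})_{ij} = a_{ji} - a_{ij} \in \{-1,0,1\}$ forced by antisymmetry modulo $\mathbf{e}$ coarsens the vertex set into two classes realizing the bidirectional factors of the directed join; for (iii), regularity must be extracted from the scalar Hermitian part combined with integrality. Parts (i) and (ii) are comparatively direct because the numerical range pins down the spectrum completely and the decomposition above then determines $L$ essentially uniquely.
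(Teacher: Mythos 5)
Your proposal ultimately does what the paper does: Theorem~\ref{thm:rnr_char} is proved in the paper by citation to the four corresponding results of~\cite{Cameron2020_RNR}, and you cite the same source, so on that level the two agree. The additional uniform outline you supply is a genuinely different contribution (the paper offers none), and most of it is sound: the rank-one modification $\mathbf{x}^{*}L\mathbf{x}=\mathbf{x}^{*}(L+\mathbf{e}\mathbf{c}^{T}+\mathbf{v}\mathbf{e}^{T})\mathbf{x}$ for $\mathbf{x}\perp\mathbf{e}$ is exactly the right mechanism for the forward directions, and the reverse directions of (ii)--(iv) via ``shape of $W(Q^{*}LQ)$ forces a structured Hermitian or skew-Hermitian part, then Laplacian integrality pins down the digraph'' is the standard and correct route.

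One assertion in your sketch is, however, false and worth flagging because this very paper is built on its failure: you invoke ``the characterization $A$ is normal iff $W(A)=\conv{\sigma(A)}$.'' The forward implication holds, but the converse is true only for matrices of size at most $4$ (Johnson~\cite{Johnson1976}, as the paper notes in Section~\ref{sec:comp}); the pseudo-normal digraphs of Table~\ref{tab:classes} are precisely counterexamples, with $W(Q^{*}LQ)=\conv{\sigma(Q^{*}LQ)}$ and $Q^{*}LQ$ not normal. For the reverse direction of (i) your conclusion can still be rescued, but by a different argument: the prescribed polygon has $n-1$ distinct corners, each corner of a numerical range is a \emph{reducing} eigenvalue, and an $(n-1)\times(n-1)$ matrix with $n-1$ orthogonal reducing eigenvectors is normal. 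You should replace the false general principle with that count. Separately, your claim that part (i)'s reverse direction is ``comparatively direct'' understates it: knowing $Q^{*}LQ$ is normal with spectrum $\{1-e^{\iu 2\pi j/n}\}_{j=1}^{n-1}$ gives $\tr{L}=n$, hence average out-degree $1$, but concluding that $\Gamma$ is a single dicycle (rather than some other digraph realizing that restricted spectrum) still needs the integrality and connectivity bookkeeping you describe only for (iii) and (iv). Since you defer to~\cite{Cameron2020_RNR} for the complete arguments, these are flaws in the sketch rather than in the proof of record, but the normality claim should not stand as written.
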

\begin{proof}
For (i), see~\cite[Theorem 2.6]{Cameron2020_RNR}; for (ii), see~\cite[Theorem 3.2]{Cameron2020_RNR}; for (iii), see~\cite[Theorem 3.3 and Corollary 3.4]{Cameron2020_RNR}; for (iv), see~\cite[Theorem 4.2 and Theorem 4.3]{Cameron2020_RNR}.
\end{proof}

Note that parts (ii)--(iv) of Theorem~\ref{thm:rnr_char} completely describe all digraphs with a restricted numerical range as a degenerate polygon, that is, a point or a line segment in the complex plane. 
In this article, we extend these results to include digraphs whose restricted numerical range is a non-degenerate convex polygon in the complex plane.
Throughout, we refer to digraphs whose restricted numerical range is a degenerate or non-degenerate convex polygon in the complex plane as \emph{polygonal}.

Partial motivation for studying polygonal digraphs comes from their application to synchronization theory. 
Indeed, consider the parameter $\mu(L)$, which is defined as the supremum of the set of real numbers $\mu$ such that $U(L-\mu I) + (L^{T}-\mu I)U$ is positive semi-definite for some irreducible symmetric zero sum matrix $U$ with non-positive off-diagonal entries. 
In~\cite{Wu2006}, it is shown that $\mu(L)$ is a measure of how well the topology of the associated coupled network is amenable to synchronization.
In particular, the larger $\mu(L)$ is, the easier it is to synchronize the network. 
The following proposition extends the result in~\cite[Theorem 4]{Wu2006}.
Note that $\sigma(\cdot)$ denotes the multi-set of eigenvalues for a given square complex matrix.
\begin{proposition}\label{prop:poly_mu}
Let $\Gamma\in\mathbb{G}$ have order $n$ and let $L$ be the Laplacian matrix of $\Gamma$.
If $\Gamma$ is polygonal, then $\alpha(\Gamma) = \mu(L)$.
\end{proposition}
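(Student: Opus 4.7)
The plan is to prove the two inequalities $\mu(L) \ge \alpha(\Gamma)$ and $\mu(L) \le \alpha(\Gamma)$ separately; only the second requires polygonality. For the lower bound I would exhibit the explicit admissible witness $U_{0} = I - \frac{1}{n}\mathbf{e}\mathbf{e}^{T}$, which is symmetric, irreducible, has zero row sums, and has off-diagonals equal to $-1/n < 0$. Since $U_{0}\mathbf{e}=0$ and $U_{0}$ acts as the identity on $\mathbf{e}^{\perp}$, and since the cross-terms $\mathbf{e}\mathbf{e}^{T}L$ and $L^{T}\mathbf{e}\mathbf{e}^{T}$ annihilate any $\mathbf{x}\perp\mathbf{e}$, the positive semi-definiteness of $U_{0}L + L^{T}U_{0} - 2\mu U_{0}$ reduces to $\mathbf{x}^{T}(L+L^{T})\mathbf{x} \ge 2\mu\norm{\mathbf{x}}^{2}$ for every $\mathbf{x}\in\mathbf{e}^{\perp}$, which holds precisely when $\mu \le \alpha(\Gamma)$ by the variational definition of $\alpha(\Gamma)$.

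For the harder direction $\mu(L)\le\alpha(\Gamma)$, I would leverage polygonality to produce a single test vector that forces the bound against every admissible witness. By Proposition~\ref{prop:basic-rnr}(iv), $\alpha(\Gamma) = \min\{\re{z}:z\in W_{r}(L)\}$, and since $W_{r}(L)$ is a (possibly degenerate) convex polygon, this minimum is attained at some vertex $\alpha$ of the polygon with $\re{\alpha}=\alpha(\Gamma)$. By Proposition~\ref{prop:basic-rnr}(i), $\alpha$ is therefore a sharp point of the classical numerical range $W(Q^{*}LQ)$ for any restrictor matrix $Q$. The classical Donoghue sharp-point theorem (see, e.g., Horn--Johnson, \emph{Topics in Matrix Analysis}, Theorem~1.6.6) guarantees that $\alpha$ is a \emph{normal eigenvalue} of $Q^{*}LQ$, yielding a unit $\mathbf{y}\in\mathbb{C}^{n-1}$ with $(Q^{*}LQ)\mathbf{y} = \alpha\mathbf{y}$. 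Setting $\mathbf{x} = Q\mathbf{y}$ and using $QQ^{*} = I - \tfrac{1}{n}\mathbf{e}\mathbf{e}^{T}$, a short computation yields the key identity $L\mathbf{x} = \alpha\mathbf{x} + c\mathbf{e}$ with $c = \tfrac{1}{n}\mathbf{e}^{T}L\mathbf{x}$; in particular $L\mathbf{x}\in\spn{\mathbf{x},\mathbf{e}}$.

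Now let $U$ be any admissible witness certifying the value $\mu$. I would rewrite its defining inequality (extended to complex test vectors, which is legitimate because $UL+L^{T}U-2\mu U$ is real symmetric) as $2\,\re{(U\mathbf{x})^{*}(L\mathbf{x})} \ge 2\mu\,\mathbf{x}^{*}U\mathbf{x}$ for all $\mathbf{x}\in\mathbb{C}^{n}$. Substituting the test vector $\mathbf{x} = Q\mathbf{y}$ and using $U\mathbf{e}=0$, the left-hand side collapses to $(U\mathbf{x})^{*}(\alpha\mathbf{x} + c\mathbf{e}) + \overline{(\cdot)} = 2\alpha(\Gamma)\,\mathbf{x}^{*}U\mathbf{x}$. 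Because $U$ is an irreducible symmetric zero-sum matrix with non-positive off-diagonals, it is positive semi-definite with $\ker U = \spn{\mathbf{e}}$, so $\mathbf{x}^{*}U\mathbf{x}>0$ for our nonzero $\mathbf{x}\perp\mathbf{e}$; cancelling yields $\mu\le\alpha(\Gamma)$. Taking the supremum over all admissible $(U,\mu)$ gives $\mu(L)\le\alpha(\Gamma)$.

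The main obstacle is the sharp-point--to--normal-eigenvalue step together with the need to transfer the resulting eigenvector of the compressed Laplacian $Q^{*}LQ$ back to a vector $\mathbf{x}$ satisfying $L\mathbf{x}\in\spn{\mathbf{x},\mathbf{e}}$; this invariance is exactly what lets the $\mathbf{x}^{*}U\mathbf{x}$ factor cancel uniformly across all admissible witnesses $U$, and it is only the polygonal hypothesis which guarantees that the minimizing boundary point of $W_{r}(L)$ comes from an actual eigenvector of $Q^{*}LQ$ rather than only a sequence of unit vectors approaching the infimum.
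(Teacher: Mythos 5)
Your proof is correct, but it follows a genuinely more self-contained route than the paper's. The paper disposes of the proposition in three lines by citing Wu's Theorems~2 and~3 for the sandwich $\alpha(\Gamma)\leq\mu(L)\leq\min_{\lambda\in\sigma(Q^{*}LQ)}\re{\lambda}$ and then invoking Proposition~\ref{prop:basic-rnr}(i) together with Kippenhahn's theorem (that the vertices of a polygonal numerical range are eigenvalues of the matrix) to conclude that polygonality collapses the sandwich. You instead re-derive both of Wu's inequalities from first principles: the lower bound by exhibiting the projector $I-\tfrac{1}{n}\mathbf{e}\mathbf{e}^{T}$ as an explicit admissible witness (your reduction to $\mathbf{x}^{T}(L+L^{T})\mathbf{x}\geq 2\mu\norm{\mathbf{x}}^{2}$ on $\mathbf{e}^{\perp}$ is correct, using $L\mathbf{e}=0$ to kill the component along $\mathbf{e}$, and as a byproduct it shows the supremum defining $\mu(L)$ is actually attained); and the upper bound by extracting an eigenvector of $Q^{*}LQ$ at a leftmost vertex via the sharp-point theorem, transporting it to $\mathbf{x}=Q\mathbf{y}$ with $L\mathbf{x}\in\spn{\mathbf{x},\mathbf{e}}$, and testing every admissible $U$ against it --- here $U\mathbf{e}=0$ and the fact that an irreducible symmetric zero-sum matrix with non-positive off-diagonals is PSD with kernel exactly $\spn{\mathbf{e}}$ (so $\mathbf{x}^{*}U\mathbf{x}>0$) do exactly the work you claim. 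Note that polygonality enters both arguments at the same spot (the minimizer of the real part over $W_{r}(L)$ is an eigenvalue of the compression), and your sharp-point step is essentially the Kippenhahn/Donoghue result the paper cites; you only need that a sharp point is an eigenvalue, not the stronger reducing-eigenvalue conclusion. What your version buys is independence from the statements imported from Wu (2006) and an explicit optimal witness; what it costs is a sentence of bookkeeping for the degenerate polygons (a single point or a line segment), where one should still confirm that the chosen extreme point is a sharp point of $W(Q^{*}LQ)$ --- immediate in each case, but worth saying.
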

\begin{proof}
By~\cite[Theorem 2 and Theorem 3]{Wu2006}, it follows that
\[
\alpha(\Gamma)\leq \mu(L)\leq\min_{\lambda\in\sigma(Q^{*}LQ)}\re{\lambda},
\]
where $Q$ is a restrictor matrix of order $n$.
Since $\Gamma$ is polygonal, $W_{r}(L)$ is a convex polygon whose vertices, by Proposition~\ref{prop:basic-rnr} (i) and~\cite[Theorem 13]{Kippenhahn1951,Zachlin2008}, are eigenvalues of $Q^{*}LQ$.
Therefore,
\[
\alpha(\Gamma) = \min_{\lambda\in\sigma(Q^{*}LQ)}\re{\lambda}.
\]
\end{proof}

In~\cite{Asadi2016}, the value
\[
\min_{\lambda\in\sigma(Q^{*}LQ)}\re{\lambda}
\]
is called the generalized algebraic connectivity of $\Gamma$ and it is shown that this value reflects the expected asymptotic convergence rate of cooperative consensus-based algorithms in an asymmetric network represented by $\Gamma$.
Hence, our investigation of polygonal digraphs will help identify networks for which the algebraic connectivity $\alpha(\Gamma)$, the generalized algebraic connectivity from~\cite{Asadi2016}, and the synchronization parameter $\mu(L)$ from~\cite{Wu2006} are equal.

In addition, our study of polygonal digraphs will help identify digraphs for which the algebraic connectivity satisfies $\alpha(\Gamma)=0$ if and only if $\Gamma$ has multiple terminal strongly connected components, see Proposition~\ref{prop:poly_zero_alpha}. 
Ideally, this result would hold for all digraphs since $\alpha(\Gamma)$ is a generalization of Fiedler's algebraic connectivity for undirected graphs~\cite{Fiedler1973} and the number of terminal strongly connected components in a digraph is equal to the algebraic (and geometric) multiplicity of the zero eigenvalue of the corresponding Laplacian matrix~\cite{Mirzaev2013}.
However, as illustrated by Figure 1, this result is not in general true for all digraphs. 
Indeed, Figure~\ref{fig:non-poly-zero-alpha} shows a digraph that has only one terminal strongly connected component, yet its algebraic connectivity is zero. 
Note that the digraph is shown on the left with the restricted numerical range on the right, and the eigenvalues of $Q^{*}LQ$ are displayed using the star symbol.
The digraph vertices are not labeled since, by Proposition~\ref{prop:basic-rnr}, the restricted numerical range is invariant under re-ordering of the vertices.

Note that given any $\Gamma=(V,E)\in\mathbb{G}$ we can partition the vertices as $V=V_{1}\sqcup V_{2}\sqcup\cdots\sqcup V_{k}$, where for each $i=1,\ldots,k$, the subdigraph induced by $V_{i}$ is \emph{strongly connected}, that is, there exists a directed path between all pairs of vertices in $V_{i}$.
Also, we say that the subdigraph induced by $V_{i}$ is \emph{terminal} provided that for all $u\in V_{i}$ and $v\in V_{j}$, where $i\neq j$, the edge $(u,v)$ does not exist in $E$.
\begin{proposition}\label{prop:poly_zero_alpha}
Let $\Gamma\in\mathbb{G}$ have order $n$ be polygonal.
Then, $\alpha(\Gamma)=0$ if and only if $\Gamma$ has multiple terminal strongly connected components.
\end{proposition}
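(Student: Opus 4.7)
The plan is to combine two structural facts. First, for polygonal $\Gamma$, the argument used in the proof of Proposition~\ref{prop:poly_mu} yields
\[
\alpha(\Gamma)=\min_{\lambda\in\sigma(Q^{*}LQ)}\re{\lambda},
\]
where $Q$ is a restrictor matrix of order $n$, so the minimum real part of $W_{r}(L)$ is actually attained at an eigenvalue of $Q^{*}LQ$. Second, in the orthonormal basis consisting of $\mathbf{e}/\sqrt{n}$ followed by the columns of $Q$, the identity $L\mathbf{e}=0$ makes $L$ block upper triangular with $(1,1)$ block equal to the scalar $0$ and $(2,2)$ block equal to $Q^{*}LQ$; hence $\sigma(L)=\{0\}\sqcup\sigma(Q^{*}LQ)$ as multisets. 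Together these two facts reduce the biconditional to the statement that $0\in\sigma(Q^{*}LQ)$ if and only if $0$ has algebraic multiplicity at least two in $\sigma(L)$, which by~\cite{Mirzaev2013} is precisely the condition that $\Gamma$ has multiple terminal strongly connected components.

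The sufficient direction is then short. Multiple terminal strongly connected components force $0\in\sigma(Q^{*}LQ)\subseteq W_{r}(L)$, so $\alpha(\Gamma)\le 0$. For the matching lower bound I would invoke Gershgorin on $L$: the identity $L_{ii}=d^{+}(i)=\sum_{j\ne i}|L_{ij}|$ places every eigenvalue of $L$ in a closed disk of radius $d^{+}(i)$ centered at $d^{+}(i)$, which sits entirely in the closed right half-plane. Consequently every element of $\sigma(Q^{*}LQ)$ has non-negative real part, and the displayed identity above gives $\alpha(\Gamma)\ge 0$.

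For the necessary direction, $\alpha(\Gamma)=0$ together with polygonality produces $\lambda\in\sigma(Q^{*}LQ)$ with $\re{\lambda}=0$. Squaring Gershgorin's bound $|\lambda-d^{+}(i)|\le d^{+}(i)$ at the relevant index yields $|\lambda|^{2}\le 2d^{+}(i)\re{\lambda}$, and substituting $\re{\lambda}=0$ forces $\lambda=0$, so $0\in\sigma(Q^{*}LQ)$ and the reduction above finishes the argument. The one delicate point — what I would call the ``main obstacle'' — is to invoke polygonality correctly so that $\alpha(\Gamma)$ is genuinely attained as the real part of an eigenvalue of $Q^{*}LQ$, rather than merely as an infimum over $W_{r}(L)$; once that is in hand, Gershgorin together with the block structure of $L$ does all the remaining work.
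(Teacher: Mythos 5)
Your proposal is correct and takes essentially the same route as the paper's own proof: polygonality is used to force $\alpha(\Gamma)$ to be attained at an eigenvalue of $Q^{*}LQ$, Ger\v{s}gorin pins that eigenvalue to $0$, and the block-triangularization of $L$ with respect to $\mathbf{e}$ reduces the claim to the multiplicity of the zero eigenvalue of $L$, which counts terminal strongly connected components. Your explicit multiset identity $\sigma(L)=\{0\}\sqcup\sigma(Q^{*}LQ)$ is a welcome clarification of a step the paper only asserts, but it is not a different argument.
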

\begin{proof}
As noted in the proof of Proposition~\ref{prop:poly_mu}, since $\Gamma$ is polygonal, $W_{r}(L)$ is a convex polygon whose vertices are eigenvalues of $Q^{*}LQ$, which are also eigenvalues of $L$.
Moreover, by Ger{\v s}gorin's circle theorem (see for example~\cite{Horn2013}), the eigenvalues of $L$ are contained in the disk centered at $\max_{i\in V}d^{+}(i)$ with radius $\max_{i\in V}d^{+}(i)$.
Hence, $L$ cannot have purely imaginary eigenvalues, and it follows that $0$ is the minimum real part of $W_{r}(L)$ if and only if the origin of the complex plane is a corner of $W_{r}(L)$. 
Therefore, Proposition~\ref{prop:basic-rnr} (iv) and~\cite[Theorem 13]{Kippenhahn1951,Zachlin2008} imply that $\alpha(\Gamma)=0$ if and only if $0$ is an eigenvalue of $Q^{*}LQ$, that is, $0$ is an eigenvalue of $L$ with multiplicity greater than $1$.
\end{proof}

\begin{figure}[ht]
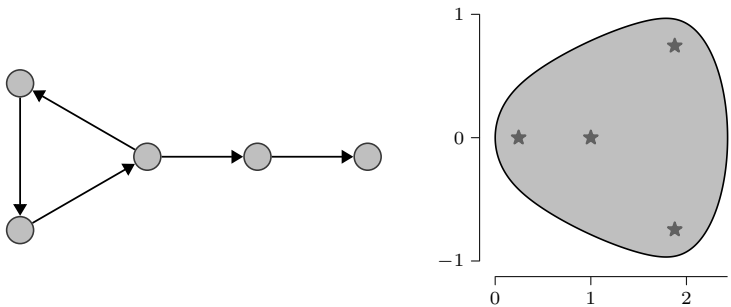

\centering
\drawFigure{non-poly-zero-alpha}{y min=-1,y max=1,x min=-0.000001}
\caption{A digraph with zero algebraic connectivity and one isolated strongly connected component.}
\label{fig:non-poly-zero-alpha}
\end{figure}

Before proceeding, note that our definitions for algebraic connectivity, restricted numerical range, and polygonal digraphs, along with our results from Proposition~\ref{prop:basic-rnr} (i)--(iv), Proposition~\ref{prop:poly_mu}, and Proposition~\ref{prop:poly_zero_alpha} hold for finite simple digraphs with positive weights. 
Although, in this article, we focus on unweighted digraphs. 
\section{Computing Polygonal Digraphs}\label{sec:comp}
Let $\Gamma\in\mathbb{G}$ have order $n$ and let $L$ be the Laplacian matrix of $\Gamma$.
By Proposition~\ref{prop:basic-rnr}, the restricted numerical range of $\Gamma$ can be computed as the numerical range of $Q^{*}LQ$, where $Q$ is a restrictor matrix of order $n$.
The computation of the numerical range of a general complex matrix $B$ is well-established in the literature and relies on the fact that since $W(B)$ is convex, as proved by Toeplitz and Hausdorff~\cite{Hausdorff1919,Toeplitz1918}, the boundary of $W(B)$ can be approximated by convex polygons determined by supporting lines of $W(B)$~\cite{Johnson1978,Kippenhahn1951,Zachlin2008}.
Note that a \emph{supporting line} of a curve is a line that contains a point on the curve but does not separate any two points on the curve.

\subsection{Supporting Lines Check}
Using Johnson's algorithm~\cite{Johnson1978}, we compute the restricted numerical range of a digraph $\Gamma\in\mathbb{G}$ by setting $B = Q^{*}LQ$, where $L$ is the Laplacian matrix of $\Gamma$ and $Q$ is a restrictor matrix of order $n$.
Once $W_{r}(L)$ is computed we can determine if $\Gamma$ is polygonal by checking whether $W_{r}(L)$ is equal to the convex hull of the eigenvalues of $B$, which we denote by $\conv{\sigma(B)}$.
In general, given a finite set $S$ contained in a real or complex vector space, the \emph{convex hull} $\conv{S}$ is the set of all convex combinations of elements from $S$. 
Alternatively, $\conv{S}$ can be characterized as the intersection of all convex sets containing $S$, so it is the smallest closed convex set containing $S$~\cite{Horn1991}.

If we are only interested in whether $W(B)=\conv{\sigma(B)}$, for a given $B\in\mathbb{C}^{n\times n}$, then we need not compute the numerical range $W(B)$.
Indeed, since $\conv{\sigma(B)}\subseteq W(B)$, it suffices to check that every edge of $\conv{\sigma(B)}$ is a supporting line of $W(B)$.
To this end, let $v_{1},\ldots,v_{k}$ denote the vertices of $\conv{\sigma(A)}$ ordered in the counter-clockwise direction.
Then, for each $j\in\{1,\ldots,k\}$, define the direction
\[
d_{j} = \frac{v_{j+1}-v_{j}}{\abs{v_{j+1}-v_{j}}}
\]
and note that the $j$th edge of $\conv{\sigma(B)}$ can be written as
\[
v_{j}v_{j+1} = \left\{v_{j} + td_{j}\colon 0\leq t\leq \abs{v_{j+1}-v_{j}} \right\},
\]
where it is assumed that $v_{k+1}=v_{1}$.

For each $j\in\{1,\ldots,k\}$, let $\theta_{j}\in[0,2\pi)$ be such that 
\begin{equation}\label{eq:theta}
e^{\iu\theta_{j}} = \iu\cdot\conj{d_{j}},
\end{equation}
and define
\[
\hat{v}_{j} = e^{\iu\theta_{j}}v_{j}
\]
as the vertices of the rotated convex hull $\conv{\sigma\left(e^{\iu\theta_{j}}B\right)}$.
Based on how $\theta_{j}$ is defined, it is clear that the $j$th edge of this rotated convex hull,
\[
\hat{v}_{j}\hat{v}_{j+1} = \left\{e^{\iu\theta_{j}}v_{j} + te^{\iu\theta_{j}}d_{j}\colon 0\leq t\leq\abs{v_{j+1}-v_{j}}\right\},
\]
can be formed from the edge $v_{j}v_{j+1}$ by first rotating it to the real axis (pointing in the positive direction) and then rotating $90$ degrees in the counter-clockwise direction.
Hence, the edge $\hat{v}_{j}\hat{v}_{j+1}$ is a vertical line segment whose real part is equal to the maximum real part of $\conv{\sigma\left(e^{\iu\theta_{j}}B\right)}$.
If the edge $\hat{v}_{j}\hat{v}_{j+1}$ is also a supporting line of $W(e^{\iu\theta_{j}}B)$, then it follows that $\re{e^{\iu\theta_{j}}v_{j}}$ must equal the maximum real part of $W(e^{\iu\theta_{j}}B)$, which is well-known
(see for example~\cite{Johnson1978}) to equal the maximum real eigenvalue of the Hermitian part of the matrix $e^{\iu\theta_{j}}B$.

Since $W(B)=\conv{\sigma(B)}$ if and only if $W(e^{\iu\theta}B)=\conv{\sigma\left(e^{\iu\theta}B\right)}$, for all $\theta\in[0,2\pi)$, the above discussion leads to an efficient algorithm for determining if a digraph $\Gamma\in\mathbb{G}$ is polygonal.
Indeed, see Algorithm~\ref{alg:poly}, noting that $H(B) = \left(B+B^{*}\right)/2$ is the \emph{Hermitian part} of the matrix $B$.
\begin{algorithm}
\caption{Algorithm to determine if $\Gamma$ is a polygonal digraph.}
\label{alg:poly}
\begin{algorithmic}
\STATE{I. Let $B=Q^{*}LQ$, where $Q$ is a restrictor matrix of order $n$.}
\STATE{II. Let $v_{1},\ldots,v_{k},v_{k+1}$, where $v_{k+1}=v_{1}$, denote the vertices of $\conv{\sigma(B)}$.}
\STATE{III.} Let $\epsilon$ be some pre-determined tolerance. 
\STATE{IV. If $k=1$, then there is only one real eigenvalue in the convex hull. Perform the following check:}
\IF{$\lambda_{\textrm{max}}\left(H(B)\right) > \lambda_{\textrm{min}}\left(H(B)\right) + \epsilon$}
	\STATE{Return False}
\ELSE
	\STATE{Return True}
\ENDIF
\STATE{V. If $k>1$, then perform the following checks:}
\FOR{$j=1,\ldots,k$}
	\STATE{Define $e^{\iu\theta_{j}}$ as in~\eqref{eq:theta}}
	\IF{$\lambda_{\textrm{max}}\left(H(e^{\iu\theta_{j}}B)\right) > \re{e^{\iu\theta_{j}}v_{j}} + \epsilon$}
		\STATE{Return False}
	\ENDIF
\ENDFOR
\STATE{Return True}
\end{algorithmic}
\end{algorithm}
\subsection{Survey of Polygonal Digraphs}
Using the {\tt Nauty} software~\cite{McKay2013}, we are able to generate all non-isomorphic digraphs of order $n$.
For each digraph $\Gamma$, we can use Algorithm~\ref{alg:poly} to determine if $\Gamma$ is polygonal. 
We can further speed up this process by noting that Proposition~\ref{prop:basic-rnr}(v) implies that $W_{r}(L)$ is a complex polygon if and only if $W_{r}(\conj{L})$ is a complex polygon.
Hence, to generate the set of polygonal digraphs, we only need to search through (about) half of the non-isomorphic digraphs of order $n$ and then add complements as necessary.
Code that implements this process in Python and C is available at~\url{https://github.com/trcameron/polygonal-digraphs}. 
After running this code, we identified $3,~9,~29,~97,~395,~2185,$ and $18930$ polygonal digraphs on $2,~3,~4,~5,~6,~7,$ and $8$ vertices, respectively.

Furthermore, the polygonal digraphs can be split into three classes. 
In \emph{class 1}, the Laplacian matrix is normal; we refer to these as \emph{normal digraphs}.
It is worth noting that~\cite[Lemma 2.5]{Cameron2020_RNR} implies that if $L$ is normal then $Q^{*}LQ$ is normal for any restrictor matrix $Q$ of order $n$.
In \emph{class 2}, the Laplacian matrix is not normal but $Q^{*}LQ$ is normal; we refer to these as \emph{restricted-normal digraphs}.
Finally, in \emph{class 3}, neither the Laplacian matrix nor $Q^{*}LQ$ is normal but $W_{r}(L)$ is still a complex polygon; we refer to these as \emph{pseudo-normal digraphs}.
Figure~\ref{fig:classes} provides an example from each class.
\begin{figure}[ht]
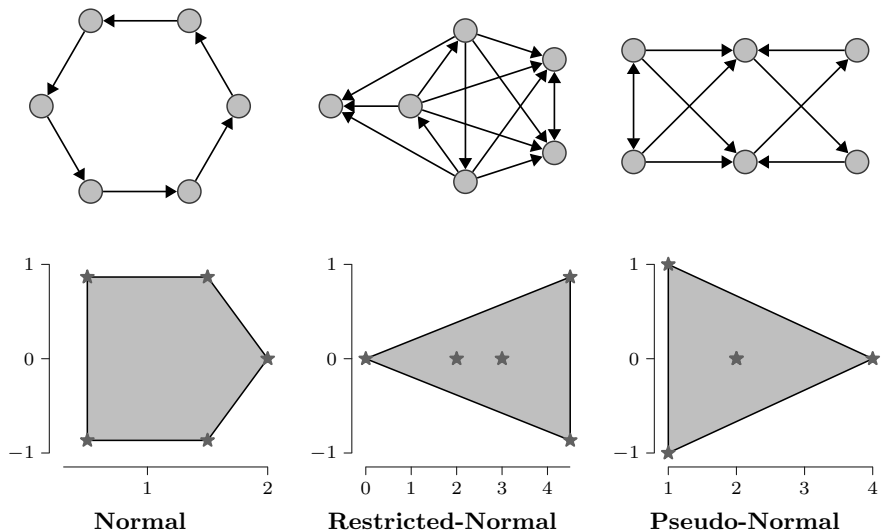

\centering
\tikzset{/my/graphic options/tall preset}
\begin{tabular}{ccc}
\drawFigure{case1}{x min=0.3,y min=-1,y max=1}
&
\drawFigure{case2}{y min=-1,y max=1}
&
\drawFigure{case3}{}
\\
\textbf{\small{Normal}}
&
\textbf{\small{Restricted-Normal}}
&
\textbf{\small{Pseudo-Normal}}
\end{tabular}
\caption{Examples from each class of polygonal digraphs.}
\label{fig:classes}
\end{figure}

In Table~\ref{tab:classes}, the number of polygonal digraphs in each class is shown for orders $n=2,\ldots,8$.
Not surprisingly, there are no pseudo-normal digraphs until $n\geq 6$. 
Indeed, if $W(B)=\conv{\sigma(B)}$ for some $n\times n$ complex matrix $B$, where $n\leq 4$, then it is known that $B$ must be normal~\cite{Johnson1976}.
Since $Q^{*}LQ$ is a square matrix of size one less than the size of $L$, the above observation follows immediately. 
\begin{table}[ht]
\centering
\begin{tabular}{c|c|c|c}
n & Normal Digraphs & Restricted-Normal Digraphs & Pseudo-Normal Digraphs \\
\hline
2 & 2 & 1 & 0 \\
3 & 5 & 4 & 0 \\
4 & 15 & 14 & 0 \\
5 & 47 & 50 & 0 \\
6 & 214 & 179 & 2 \\
7 & 1375 & 766 & 44 \\
8 & 14182 & 4349 & 399 \\
\hline
\end{tabular}
\caption{Number of polygonal digraphs in each class.}
\label{tab:classes}
\end{table}

We conclude this section with Proposition~\ref{prop:class-comp}, which shows that not only are polygonal digraphs, in general, closed under the digraph complement, but the individual classes are also closed under the digraph complement. 
\begin{proposition}\label{prop:class-comp}
Let normal, restricted-normal, and pseudo-normal digraphs be referred to as class $1$, $2$, and $3$, respectively.
Also, let $\Gamma\in\mathbb{G}$ have order $n$.
Then, $\Gamma$ is in class $c\in\{1,2,3\}$ if and only if its complement $\conj{\Gamma}$ is in class $c$.
\end{proposition}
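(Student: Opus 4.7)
The plan is to decouple the three defining properties and show each is individually preserved by the complement. Writing $J := \mathbf{e}\mathbf{e}^{T}$ and using the identity $\conj{L} = nI - J - L$, I isolate three properties: \textbf{(P1)} $L$ is normal, \textbf{(P2)} $Q^{*}LQ$ is normal, and \textbf{(P3)} $W_{r}(L)$ is a convex polygon. Class $1$ is characterized by (P1); class $2$ by the failure of (P1) together with (P2); and class $3$ by the failure of both (P1) and (P2) together with (P3). Since $\conj{\conj{\Gamma}}=\Gamma$, it suffices to show that each of (P1), (P2), (P3) is preserved under $\Gamma \mapsto \conj{\Gamma}$.

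Property (P3) is immediate from Proposition~\ref{prop:basic-rnr}(v): the affine map $z \mapsto n-z$ carries convex polygons to convex polygons. For (P2), the columns of a restrictor matrix $Q$ are orthogonal to $\mathbf{e}$, so $Q^{*}JQ = (Q^{*}\mathbf{e})(\mathbf{e}^{T}Q) = 0$, whence
\[
Q^{*}\conj{L}Q \;=\; nI_{n-1} - Q^{*}LQ.
\]
Translation by a scalar multiple of the identity preserves normality, so (P2) transfers.

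Property (P1) requires the most care because $J$ does not commute with $L$ in general. The key observation is that $L\mathbf{e}=\mathbf{0}$ for any Laplacian, and if $L$ is normal then $\ker L = \ker L^{*}$, so in addition $L^{*}\mathbf{e}=\mathbf{0}$. It follows that each of the four mixed products $LJ$, $JL$, $L^{*}J$, $JL^{*}$ vanishes. Expanding
\[
[\,\conj{L},\,\conj{L}^{*}\,] \;=\; [J+L,\,J+L^{*}] \;=\; [J,L^{*}] + [L,J] + [L,L^{*}],
\]
every term on the right is zero: the first two from the vanishing mixed products, the last from the normality of $L$. Hence $\conj{L}$ is normal, and since $\conj{\conj{L}}=L$ the converse is immediate.

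Combining the three preservations shows that each class is closed under the complement. I expect the main obstacle to be the commutator expansion for (P1); the enabling step is the (standard) observation that normality of a Laplacian automatically lifts the relation $L\mathbf{e}=\mathbf{0}$ to $L^{*}\mathbf{e}=\mathbf{0}$, which is precisely what collapses the $J$-cross terms.
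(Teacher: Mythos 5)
Your proof is correct, and its overall architecture---splitting the claim into the three properties (P1)--(P3) and checking that each is individually preserved under complementation---is exactly the paper's; your treatments of (P2) and (P3) coincide with the paper's almost verbatim ($Q^{*}\conj{L}Q=nI-Q^{*}LQ$, and Proposition~\ref{prop:basic-rnr}(v)). Where you genuinely diverge is the preservation of normality of $L$ itself. The paper observes that $\mathbf{e}$ is a common $0$-eigenvector of $L$ and $\conj{L}$ and that every eigenvector of $L$ orthogonal to $\mathbf{e}$ for eigenvalue $\lambda$ is an eigenvector of $\conj{L}$ for eigenvalue $n-\lambda$, and then invokes the characterization of normality by the existence of a full orthogonal eigenbasis (Grone et al., Condition 14). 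You instead expand the commutator $[\conj{L},\conj{L}^{*}]=[J,L^{*}]+[L,J]+[L,L^{*}]$ and annihilate the cross terms using $L\mathbf{e}=\mathbf{0}$ together with $\ker L=\ker L^{*}$ for normal $L$ (equivalently, normal Laplacians are balanced, a fact the paper itself records later via Grone's Condition 63). Your route is more elementary and self-contained, requiring no spectral decomposition, while the paper's version makes the eigenvalue correspondence $\lambda\mapsto n-\lambda$ explicit, which it reuses elsewhere. Both arguments are sound, and your appeal to $\conj{\conj{\Gamma}}=\Gamma$ correctly disposes of the converse direction in each case.
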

\begin{proof}
Let $\Gamma$ and $\conj{\Gamma}$ be complementary digraphs of order $n$ with Laplacian matrices $L$ and $\conj{L}$, respectively. 
Note that
\[
\conj{L} = \left(n I - \mathbf{e}\mathbf{e}^{T}\right)-L.
\]
Clearly $\mathbf{e}$ is an eigenvector of both $L$ and $\conj{L}$ corresponding to the zero eigenvalue.
Furthermore, for any $\mathbf{v}$ that is orthogonal to $\mathbf{e}$, the eigenvalue-eigenvector equation $L\mathbf{v}=\lambda\mathbf{v}$ holds if and only if $\conj{L}\mathbf{v} = \left(n-\lambda\right)\mathbf{v}$ holds. 
Therefore, by Grone~\cite[Condition 14]{Grone1987}, $L$ is normal if and only if $\conj{L}$ is normal, which implies that $\Gamma$ is normal if and only if $\conj{\Gamma}$ is normal.

Now, let $Q$ be a restrictor matrix of order $n$.
Then, 
\[
Q^{*}\conj{L}Q = n I - Q^{*}LQ.
\]
Hence, $Q^{*}LQ$ is normal if and only if $Q^{*}\conj{L}Q$ is normal, which implies that $\Gamma$ is restricted-normal if and only if $\conj{\Gamma}$ is restricted-normal. 

Finally, by Proposition~\ref{prop:basic-rnr}(v), it follows that $W_{r}(L)$ is a complex polygon if and only if $W_{r}\left(\conj{L}\right)$ is a complex polygon.
Therefore, $\Gamma$ is pseudo-normal if and only if $\conj{\Gamma}$ is pseudo-normal. 
\end{proof}

\section{Analyzing Polygonal Digraphs}\label{sec:analysis-poly}
In this section, we analyze the structure of polygonal digraphs from the normal and restricted-normal classes. 
In particular, we prove sufficient conditions for normal digraphs and show that the directed join of two normal digraphs results in a restricted-normal digraph.
Also, we prove that directed joins are the only restricted-normal digraphs when the order is square-free or twice a square-free number.
Finally, we provide a construction for restricted-normal digraphs that are not directed joins for all orders that are neither square-free nor twice a square-free number. 

\subsection{Normal Digraphs}\label{subsec:analysis-nrml}
It is easy enough to construct a normal digraph.
For instance, any dicycle will suffice as illustrated in Figure~\ref{fig:classes}; more generally, any digraph whose Laplacian is a circulant matrix, possibly after re-ordering the vertices, will also suffice. 
Consider, for example, the digraph in Figure~\ref{fig:circ-normal} whose plane embedding has $60^{\circ}$ rotational symmetry.
Given a certain ordering of the vertices, the Laplacian matrix $L$ can be written as a circulant matrix and is, therefore, normal. 
\begin{figure}[ht]
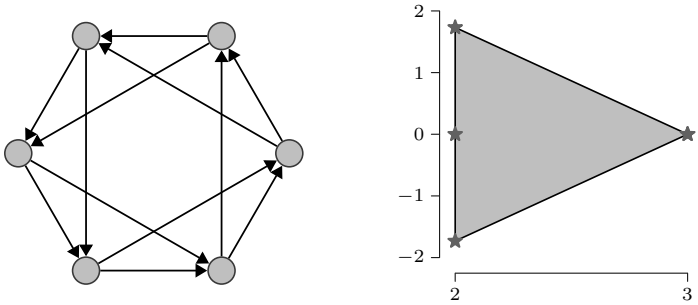

\centering
\drawFigure{circ-normal}{y min=-2,y max=2}
\caption{A normal digraph with circulant Laplacian matrix.}
\label{fig:circ-normal}
\end{figure}

Let $\Gamma=(V,E)\in\mathbb{G}$ be a normal digraph with Laplacian matrix $L$.
By~\cite[Condition 63]{Grone1987}, we have
\[
\norm{\mathbf{e}^{T}L}  = \norm{L\mathbf{e}} = 0,
\]
which implies that the column sums of $L$ are all zero.
Hence $\Gamma$ must be \emph{balanced}, that is, $d^{+}(i)=d^{-}(i)$ for all $i\in V$.
Figure~\ref{fig:bal-non-normal} shows a digraph that is balanced but does not have a normal Laplacian, which is evident from the shape of its restricted numerical range. 
\begin{figure}[ht]
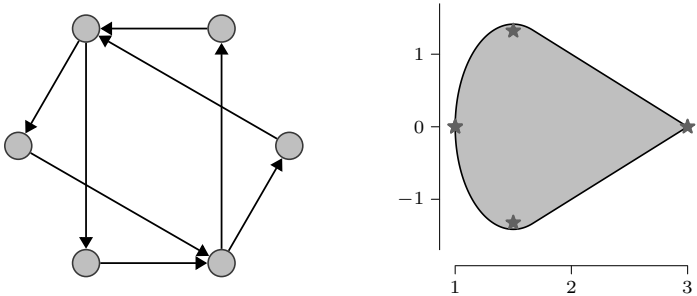

\centering
\drawFigure{bal-non-normal}{y min=-1.7,y max=1.7}
\caption{A balanced non-normal digraph.}
\label{fig:bal-non-normal}
\end{figure}
Therefore, the balanced property is, not surprisingly, weaker than the normal property for digraphs.
Nevertheless, the restricted numerical range of balanced digraphs is extremely well behaved under certain operations as described in Proposition~\ref{prop:bal-nr-ops}.
Note that the \emph{bidirectional join} of digraphs $\Gamma=\left(V,E\right)$ and $\Gamma' = \left(V',E'\right)$, where $V\cap V'=\emptyset$, is defined by
\[
\Gamma\bdjoin\Gamma' = \left(V\sqcup V',E\sqcup E'\sqcup\left\{(i,j),~(j,i)\colon i\in V, j\in V'\right\}\right)
\]
and the \emph{disjoint union} is defined by
\[
\Gamma\sqcup\Gamma' = \left(V\sqcup V',E\sqcup E'\right).
\]
Also, we use $I_{k}$ to denote the $k\times k$ identity matrix, and $J_{k_{1}\times k_{2}}$ to denote the $k_{1}\times k_{2}$ all ones matrix.
\begin{proposition}\label{prop:bal-nr-ops}
Let $\Gamma_{1},\Gamma_{2}\in\mathbb{G}$ be balanced digraphs of order $n_{1}$ and $n_{2}$, respectively.
Then, 
\begin{enumerate}[(i)]
\item $W_{r}\left(\Gamma_{1}\sqcup\Gamma_{2}\right) = \conv{W_{r}(\Gamma_{1})\cup W_{r}(\Gamma_{2})\cup\{0\}}$ 
\item $W_{r}\left(\Gamma_{1}\djoin\Gamma_{2}\right) =  \conv{\left(W_{r}(\Gamma_{1})+n_{2}\right)\cup W_{r}(\Gamma_{2})\cup\{n_{2}\}}$
\item $W_{r}\left(\Gamma_{1}\bdjoin\Gamma_{2}\right) = \conv{\left(W_{r}(\Gamma_{1})+n_{2}\right)\cup\left(W_{r}(\Gamma_{2})+n_{1}\right)\cup\{n_{1}+n_{2}\}}.$
\end{enumerate}
\end{proposition}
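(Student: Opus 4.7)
I would prove all three identities by a unified block-matrix argument. For the disjoint union, directed join, and bidirectional join, the Laplacian $L$ takes the respective block forms
\[
\begin{pmatrix} L_{1} & 0 \\ 0 & L_{2} \end{pmatrix}, \quad
\begin{pmatrix} L_{1} + n_{2}I_{n_{1}} & -J \\ 0 & L_{2} \end{pmatrix}, \quad
\begin{pmatrix} L_{1} + n_{2}I_{n_{1}} & -J \\ -J^{T} & L_{2} + n_{1}I_{n_{2}} \end{pmatrix},
\]
where $J = \mathbf{e}^{n_{1}}(\mathbf{e}^{n_{2}})^{T}$. Given any unit vector $\mathbf{x} \perp \mathbf{e}^{n_{1}+n_{2}}$, write $\mathbf{x} = (\mathbf{x}_{1}, \mathbf{x}_{2})$ and orthogonally decompose each block as $\mathbf{x}_{i} = \alpha_{i}\mathbf{e}^{n_{i}}/\sqrt{n_{i}} + \mathbf{y}_{i}$ with $\mathbf{y}_{i} \perp \mathbf{e}^{n_{i}}$. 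The orthogonality constraint becomes $\sqrt{n_{1}}\alpha_{1} + \sqrt{n_{2}}\alpha_{2} = 0$, and because each $\Gamma_{i}$ is balanced, $L_{i}\mathbf{e}^{n_{i}} = 0 = (\mathbf{e}^{n_{i}})^{T}L_{i}$, so $\mathbf{x}_{i}^{*}L_{i}\mathbf{x}_{i} = \mathbf{y}_{i}^{*}L_{i}\mathbf{y}_{i}$.

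\textbf{Main computation.} The $-J$ cross term simplifies via the orthogonality constraint to $-\mathbf{x}_{1}^{*}J\mathbf{x}_{2} = |s|^{2}$ where $s = (\mathbf{e}^{n_{1}})^{T}\mathbf{x}_{1} = \sqrt{n_{1}}\alpha_{1}$, and in case (iii) the symmetric term $-\mathbf{x}_{2}^{*}J^{T}\mathbf{x}_{1}$ contributes another $|s|^{2}$. Setting $\beta_{i} = \|\mathbf{y}_{i}\|^{2}$ and $\gamma = |\alpha_{1}|^{2} + |\alpha_{2}|^{2}$, the unit-norm condition yields $\beta_{1} + \beta_{2} + \gamma = 1$. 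A short reduction then shows
\[
\mathbf{x}^{*}L\mathbf{x} = \beta_{1}(w_{1} + a) + \beta_{2}(w_{2} + b) + \gamma c,
\]
where $w_{i} = \mathbf{y}_{i}^{*}L_{i}\mathbf{y}_{i}/\beta_{i} \in W_{r}(L_{i})$ whenever $\beta_{i} > 0$ (and the summand vanishes otherwise), and the shifts $(a, b, c)$ equal $(0, 0, 0)$, $(n_{2}, 0, n_{2})$, and $(n_{2}, n_{1}, n_{1}+n_{2})$ in parts (i), (ii), (iii) respectively. This exhibits $\mathbf{x}^{*}L\mathbf{x}$ as a convex combination of points in $W_{r}(L_{1}) + a$, $W_{r}(L_{2}) + b$, and $\{c\}$, proving the forward inclusion.

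\textbf{Reverse inclusion and main obstacle.} For the converse, given any target convex combination $\lambda_{1}(w_{1}+a) + \lambda_{2}(w_{2}+b) + \lambda_{0}c$, I would reverse-engineer $\mathbf{x}$: pick unit $\widetilde{\mathbf{y}}_{i} \perp \mathbf{e}^{n_{i}}$ realizing each $w_{i} \in W_{r}(L_{i})$, set $\mathbf{y}_{i} = \sqrt{\lambda_{i}}\widetilde{\mathbf{y}}_{i}$, and take real $\alpha_{1} = \sqrt{n_{2}\lambda_{0}/(n_{1}+n_{2})}$ and $\alpha_{2} = -\sqrt{n_{1}\lambda_{0}/(n_{1}+n_{2})}$, which simultaneously satisfy $|\alpha_{1}|^{2} + |\alpha_{2}|^{2} = \lambda_{0}$ and $\sqrt{n_{1}}\alpha_{1} + \sqrt{n_{2}}\alpha_{2} = 0$. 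The main obstacle lies in the algebraic bookkeeping for case (iii), where the sum $n_{2}\|\mathbf{x}_{1}\|^{2} + n_{1}\|\mathbf{x}_{2}\|^{2} + 2|s|^{2}$ must collapse cleanly to $n_{2}\beta_{1} + n_{1}\beta_{2} + (n_{1}+n_{2})\gamma$; this works precisely because $|s|^{2}(n_{1}+n_{2})^{2}/(n_{1}n_{2}) = (n_{1}+n_{2})\gamma$, using the twin identities $|s|^{2} = n_{1}|\alpha_{1}|^{2} = n_{2}|\alpha_{2}|^{2}$. The degenerate cases $n_{i} = 1$ pose no trouble: the orthogonality of $\mathbf{y}_{i}$ to $\mathbf{e}^{n_{i}}$ then forces $\mathbf{y}_{i} = \mathbf{0}$, so $\beta_{i} = 0$ and the (empty) set $W_{r}(L_{i})$ harmlessly drops out of the convex hull.
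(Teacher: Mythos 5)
Your proof is correct, and the computations check out: the balancedness of each $\Gamma_{i}$ kills the coupling between the $\mathbf{e}^{n_{i}}$-components and $L_{i}$, the constraint $\sqrt{n_{1}}\alpha_{1}+\sqrt{n_{2}}\alpha_{2}=0$ turns the $-J$ cross terms into $+\abs{s}^{2}$, and the identities $\abs{s}^{2}=n_{1}\abs{\alpha_{1}}^{2}=n_{2}\abs{\alpha_{2}}^{2}$ do collapse the shift terms to $n_{2}\beta_{1}+n_{2}\gamma$ in case (ii) and $n_{2}\beta_{1}+n_{1}\beta_{2}+(n_{1}+n_{2})\gamma$ in case (iii). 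The underlying idea is the same as the paper's, but the execution differs. The paper assembles an explicit restrictor matrix $Q$ of order $n_{1}+n_{2}$ whose columns are the block vectors you are implicitly using as a basis (the columns of $Q_{1}$, the columns of $Q_{2}$, and the vector with blocks proportional to $-n_{2}\mathbf{e}^{n_{1}}$ and $n_{1}\mathbf{e}^{n_{2}}$), verifies that $Q^{*}LQ$ is block diagonal with blocks $Q_{1}^{*}L_{1}Q_{1}+aI$, $Q_{2}^{*}L_{2}Q_{2}+bI$, and the $1\times 1$ block $c$, and then cites the standard facts that $W(A\oplus B)=\conv{W(A)\cup W(B)}$ and $W(A+\mu I)=W(A)+\mu$. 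Your argument reproves those cited facts from scratch in this special case: the forward inclusion is your convex-combination computation, and the reverse inclusion is your explicit reconstruction of $\mathbf{x}$ from a target combination (which the direct-sum property would hand you for free). What the paper's route buys is brevity and a reusable structural fact (the block-diagonal form of $Q^{*}LQ$ is reused later to deduce normality of $Q^{*}LQ$ in Proposition 3.6); what your route buys is a self-contained elementary proof that also makes visible exactly where each hypothesis enters, including the careful handling of the degenerate cases $\beta_{i}=0$ and $n_{i}=1$.
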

\begin{proof}
Let $L_{1}$ and $L_{2}$ denote the Laplacian matrix of $\Gamma_{1}$ and $\Gamma_{2}$, respectively. 
Also, let $Q_{1}$ and $Q_{2}$ be restrictor matrices of order $n_{1}$ and $n_{2}$, respectively.
Finally, define
\[
Q = \begin{bmatrix} Q_{1} & 0 & -\frac{n_{2}}{\sqrt{n_{2}^{2}n_{1}+n_{1}^{2}n_{2}}}\mathbf{e}^{n_{1}} \\ 0 & Q_{2} & \frac{n_{1}}{\sqrt{n_{2}^{2}n_{1}+n_{1}^{2}n_{2}}}\mathbf{e}^{n_{2}} \end{bmatrix}
\]
as a restrictor matrix of order $n=n_{1}+n_{2}$.
\begin{enumerate}[(i)]
\item The Laplacian matrix of $\Gamma_{1}\sqcup\Gamma_{2}$, possibly after re-ordering the vertices, can be written as 
\[
L = \begin{bmatrix} L_{1} & 0 \\ 0 & L_{2}\end{bmatrix}.
\]
Since both $\Gamma_{1}$ and $\Gamma_{2}$ are balanced, one can readily verify that
\[
Q^{*}LQ = \begin{bmatrix} Q_{1}^{*}L_{1}Q_{1} & 0 & 0 \\ 0 & Q_{2}^{*}L_{2}Q_{2} & 0 \\ 0 & 0 & 0\end{bmatrix}.
\]
Now, by~\cite[Property 1.2.10]{Horn1991} and Proposition~\ref{prop:basic-rnr}(i), it follows that
\[
W_{r}(L) = \conv{W_{r}(L_{1})\cup W_{r}(L_{2})\cup\{0\}}.
\]
\item The Laplacian matrix of $\Gamma_{1}\djoin\Gamma_{2}$, possibly after re-ordering the vertices, can be written as 
\[
L = \begin{bmatrix} L_{1}+n_{2}I_{n_{1}}& -J_{n_{1}\times n_{2}} \\ 0 & L_{2}\end{bmatrix}.
\]
Since both $\Gamma_{1}$ and $\Gamma_{2}$ are balanced, one can readily verify that
\[
Q^{*}LQ = \begin{bmatrix} Q_{1}^{*}L_{1}Q_{1}+n_{2}I_{n_{1}-1} & 0 & 0 \\ 0 & Q_{2}^{*}L_{2}Q_{2} & 0 \\ 0 & 0 & n_{2}\end{bmatrix}.
\]
By~\cite[Property 1.2.3 and 1.2.10]{Horn1991} and Proposition~\ref{prop:basic-rnr}, it follows that
\[
W_{r}(L) = \conv{\left(W_{r}(L_{1})+n_{2}\right)\cup W_{r}(L_{2})\cup\{n_{2}\}}.
\]
\item The Laplacian matrix of $\Gamma_{1}\bdjoin\Gamma_{2}$, possibly after re-ordering the vertices, can be written as 
\[
L = \begin{bmatrix} L_{1}+n_{2}I_{n_{1}} & -J_{n_{1}\times n_{2}} \\ -J_{n_{2}\times n_{1}} & L_{2}+n_{1}I_{n_{2}}\end{bmatrix}.
\]
Since both $\Gamma_{1}$ and $\Gamma_{2}$ are balanced, one can readily verify that
\[
Q^{*}LQ = \begin{bmatrix} Q_{1}^{*}L_{1}Q_{1} + n_{2}I_{n_{1}-1} & 0 & 0 \\ 0 & Q_{2}^{*}L_{2}Q_{2} + n_{1}I_{n_{2}-1} & 0 \\ 0 & 0 & n_{1}+n_{2}\end{bmatrix}.
\]
By~\cite[Property 1.2.3 and 1.2.10]{Horn1991} and Proposition~\ref{prop:basic-rnr}, it follows that
\[
W_{r}(L) = \conv{\left(W_{r}(L_{1})+n_{2}\right)\cup\left(W_{r}(L_{2})+n_{1}\right)\cup\{n_{1}+n_{2}\}}.
\]
\end{enumerate}
\end{proof}

Clearly, Proposition~\ref{prop:bal-nr-ops} holds for normal digraphs $\Gamma_{1}$ and $\Gamma_{2}$ since they are also balanced.
In this case, the proof of Proposition~\ref{prop:bal-nr-ops} implies that $Q^{*}LQ$ is normal for each Laplacian matrix $L$ of $\Gamma_{1}\circ\Gamma_{2}$, where $\circ\in\{\sqcup,\djoin,\bdjoin\}$.
Since the disjoint union and bidirectional join of two balanced digraphs are also balanced, Theorem~\ref{thm:rnrml&bal} implies that $\Gamma_{1}\sqcup\Gamma_{2}$ and $\Gamma_{1}\bdjoin\Gamma_{2}$ are both normal.
Therefore, Proposition~\ref{prop:bal-nr-ops} provides an easy way to construct normal digraphs from other normal digraphs in a way that the changes to the restricted numerical range are extremely well behaved. 
\begin{lemma}\label{lem:rnrml&bal}
Let $\Gamma\in\mathbb{G}$ have order $n$ and Laplacian matrix $L$.
Also, let $Q$ be a restrictor matrix of order $n$.
Then, $\Gamma$ is balanced if and only if $\mathbf{e}^{T}LQ=0$.
\end{lemma}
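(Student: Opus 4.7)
The approach is to compute $\mathbf{e}^{T}L$ explicitly from the definition of the Laplacian and then exploit the orthogonality structure of the restrictor matrix $Q$ to pass between the condition $\mathbf{e}^{T}L=0$ and the ostensibly weaker condition $\mathbf{e}^{T}LQ=0$.

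First I would observe that the $j$-th entry of the row vector $\mathbf{e}^{T}L$ is
\[
\sum_{i=1}^{n} l_{ij} \;=\; l_{jj} + \sum_{i\neq j}(-a_{ij}) \;=\; d^{+}(j) - d^{-}(j).
\]
Hence $\mathbf{e}^{T}L = 0$ if and only if $d^{+}(j)=d^{-}(j)$ for every $j\in V$, that is, if and only if $\Gamma$ is balanced. The forward direction of the lemma then follows at once: if $\Gamma$ is balanced then $\mathbf{e}^{T}L=0$, and multiplying on the right by $Q$ gives $\mathbf{e}^{T}LQ=0$.

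For the converse, suppose $\mathbf{e}^{T}LQ=0$. The columns of $Q$ are orthonormal and span the orthogonal complement of $\mathbf{e}$ in $\mathbb{R}^{n}$, so $[\,Q\mid \mathbf{e}/\sqrt{n}\,]$ is an orthogonal matrix. The condition $\mathbf{e}^{T}LQ=0$ says that the row vector $\mathbf{e}^{T}L$, viewed as a vector in $\mathbb{R}^{n}$, is orthogonal to every column of $Q$, and therefore lies in the one-dimensional span of $\mathbf{e}$. Write $\mathbf{e}^{T}L = c\,\mathbf{e}^{T}$ for some scalar $c$. Multiplying on the right by $\mathbf{e}$ gives $\mathbf{e}^{T}L\mathbf{e} = cn$; but $L\mathbf{e}=0$ because $L$ is a Laplacian, so the left-hand side is zero and hence $c=0$. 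Thus $\mathbf{e}^{T}L=0$, and by the opening observation $\Gamma$ is balanced.

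There is really no significant obstacle in this proof; the one point worth emphasizing is that both defining properties of a restrictor matrix get used, namely that the columns of $Q$ are orthogonal to $\mathbf{e}$ (used implicitly in the setup so that $[\,Q\mid \mathbf{e}/\sqrt{n}\,]$ is orthogonal) and that they span all of $\mathbf{e}^{\perp}$ (used to conclude $\mathbf{e}^{T}L\in\operatorname{span}(\mathbf{e})^{T}$). The identity $L\mathbf{e}=0$ then supplies the final orthogonality $\mathbf{e}^{T}L\perp\mathbf{e}$ needed to kill the remaining scalar.
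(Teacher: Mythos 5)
Your proof is correct and follows essentially the same route as the paper's: both arguments rest on the orthogonal decomposition $\mathbb{R}^{n}=\spn{\mathbf{e}}\oplus\mathbf{e}^{\perp}$ together with the observation that the entries of $\mathbf{e}^{T}L$ sum to zero. The only cosmetic difference is the order of the two steps---the paper first places $\mathbf{e}^{T}L$ in $\mathbf{e}^{\perp}$ and expands it in the columns of $Q$ before invoking $\mathbf{e}^{T}LQ=0$, whereas you first use $\mathbf{e}^{T}LQ=0$ to place $\mathbf{e}^{T}L$ in $\spn{\mathbf{e}}^{T}$ and then use $L\mathbf{e}=0$ to kill the remaining scalar.
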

\begin{proof}
If $\Gamma$ is balanced, then $\mathbf{e}^{T}L=0$ and the result is trivial. 
Conversely, suppose that $\mathbf{e}^{T}LQ=0$. 
Let $\mathbf{q}_{1},\ldots,\mathbf{q}_{n-1}$ denote the columns of $Q$.
Since the entries of $\mathbf{e}^{T}L$ must sum to zero, it follows that $\mathbf{e}^{T}L$ is orthogonal to $\mathbf{e}$. 
Hence, there exist scalars $c_{1},\ldots,c_{n-1}$ such that
\[
\mathbf{e}^{T}L = c_{1}\mathbf{q}_{1}^{T} + \cdots + c_{n-1}\mathbf{q}_{n-1}^{T}
\]
and it follows that
\[
\mathbf{e}^{T}LQ = \left[c_{1}^{2},\ldots,c_{n-1}^{2}\right]^{T}. 
\]
Therefore, $\mathbf{e}^{T}LQ=0$ forces $c_{1}=\cdots=c_{n-1}=0$, that is, $\mathbf{e}^{T}L=0$, which implies that $\Gamma$ is balanced.
\end{proof}
\begin{theorem}\label{thm:rnrml&bal}
Let $\Gamma\in\mathbb{G}$ have order $n$ and Laplacian matrix $L$.
Also, let $Q$ be a restrictor matrix of order $n$.
Then, $L$ is normal if and only if $Q^{*}LQ$ is normal and $\Gamma$ is balanced. 
\end{theorem}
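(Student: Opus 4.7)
The plan is to prove both implications using the structural observation that when $\Gamma$ is balanced, the Laplacian $L$ decomposes nicely under a unitary extension of any restrictor matrix.

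For the forward direction, suppose $L$ is normal. Since $L\mathbf{e} = 0$ always (the row sums of a Laplacian vanish), normality together with Grone's condition $\|\mathbf{e}^{T}L\| = \|L\mathbf{e}\|$ from~\cite[Condition 63]{Grone1987} forces $\mathbf{e}^{T}L = 0$, so $\Gamma$ is balanced. That $Q^{*}LQ$ is then normal is exactly the content of~\cite[Lemma 2.5]{Cameron2020_RNR}, already cited in the discussion preceding the theorem, so this direction is essentially immediate.

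For the converse, suppose $Q^{*}LQ$ is normal and $\Gamma$ is balanced. The key step is to augment $Q$ into a unitary matrix of order $n$ by setting
\[
U = \begin{bmatrix} Q & \tfrac{1}{\sqrt{n}}\mathbf{e} \end{bmatrix}.
\]
Since $\Gamma$ is balanced we have both $L\mathbf{e} = 0$ and $\mathbf{e}^{T}L = 0$, so the four blocks of $U^{*}LU$ computed against $(Q,\mathbf{e}/\sqrt{n})$ simplify to
\[
U^{*}LU = \begin{bmatrix} Q^{*}LQ & 0 \\ 0 & 0 \end{bmatrix}.
\]
The right-hand side is a direct sum of a normal matrix and the $1\times 1$ zero matrix, hence normal. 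Because normality is preserved under unitary similarity, $L$ itself is normal.

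I don't anticipate a real obstacle here — the only subtlety is confirming that the off-diagonal blocks $Q^{*}L\mathbf{e}/\sqrt{n}$ and $\mathbf{e}^{T}LQ/\sqrt{n}$ both vanish, which is exactly where the balanced hypothesis is used (and Lemma 4.1 reinforces that $\mathbf{e}^{T}LQ = 0$ is equivalent to balanced, so the hypothesis is used in its sharpest form). Everything else is a one-line invocation of an already-stated result.
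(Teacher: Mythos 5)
Your proof is correct and uses the same central device as the paper: extending $Q$ by the column $\mathbf{e}/\sqrt{n}$ to a unitary matrix and reading normality of $L$ off the resulting block structure. The only difference is organizational---the paper computes $\hat{Q}^{*}L\hat{Q}$ once as a block lower-triangular matrix (using only $L\mathbf{e}=0$, so no balancedness assumption is needed up front) and obtains both directions simultaneously from the fact that such a matrix is normal if and only if the off-diagonal block $\tfrac{1}{\sqrt{n}}\mathbf{e}^{T}LQ$ vanishes and the diagonal blocks are normal, whereas you split the equivalence and dispatch the forward direction by citing Grone's Condition 63 and Lemma 2.5 of Cameron et al.\ instead.
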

\begin{proof}
Define $\hat{Q}$ to be the $n\times n$ orthonormal matrix whose first $(n-1)$ columns are equal to the columns of $Q$ and whose last column is equal to $\mathbf{e}/\sqrt{n}$.
Then
\[
\hat{Q}^{*}L\hat{Q} = \begin{bmatrix} Q^{*}LQ & 0 \\ \frac{1}{\sqrt{n}}\mathbf{e}^{T}LQ & 0\end{bmatrix},
\]
which, by~\cite[Lemma 2.5.2]{Horn2013}, is normal if and only if $Q^{*}LQ$ is normal and $\mathbf{e}^{T}LQ = 0$.
By Lemma~\ref{lem:rnrml&bal}, $\mathbf{e}^{T}LQ=0$ if and only if $\Gamma$ is balanced. 
Therefore, $\hat{Q}^{*}L\hat{Q}$ is normal if and only if $Q^{*}LQ$ is normal and $\Gamma$ is balanced. 
\end{proof}

We conclude this section by providing another method for constructing normal digraphs that cannot be described as a disjoint union or bidirectional join of two (non-null) normal digraphs.
To this end, consider the following result.
\begin{proposition}\label{prop:nrml-identity}
Let $\Gamma=(V,E)\in\mathbb{G}$ have order $n$ and adjacency matrix $A=[a_{ij}]_{i,j=1}^{n}$.
Then $\Gamma$ is normal if and only if $\Gamma$ is balanced and
\begin{equation}\label{eq:nrml-identity}
\left(a_{ij} - a_{ji}\right)\left(d^{+}(j)-d^{+}(i)\right) = \sum_{k\neq i,j}\left(a_{ik}a_{jk} - a_{ki}a_{kj}\right),
\end{equation}
for all $i\neq j\in V$.
\end{proposition}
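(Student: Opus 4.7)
The plan is to prove this by directly expanding the normality condition $LL^{T}=L^{T}L$ componentwise. Since $L$ is a real matrix, $L^{*}=L^{T}$, so normality of the Laplacian amounts to equality of the matrix products $LL^{T}$ and $L^{T}L$. Writing $L=D^{+}-A$ where $D^{+}=\diag(d^{+}(1),\ldots,d^{+}(n))$, the entries are $L_{ii}=d^{+}(i)$ and $L_{ik}=-a_{ik}$ for $k\neq i$. I would examine the diagonal and off-diagonal entries separately, as they encode the two conditions in the statement.

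First I would compute the diagonal entries. A direct calculation gives
\[
(LL^{T})_{ii}=d^{+}(i)^{2}+\sum_{k\neq i}a_{ik}^{2}=d^{+}(i)^{2}+d^{+}(i),
\]
while
\[
(L^{T}L)_{ii}=d^{+}(i)^{2}+\sum_{k\neq i}a_{ki}^{2}=d^{+}(i)^{2}+d^{-}(i),
\]
using that the $a_{ij}$ are $0/1$ valued. Thus the diagonal entries agree for all $i$ if and only if $d^{+}(i)=d^{-}(i)$ for every $i$, i.e. $\Gamma$ is balanced. This gives the ``balanced'' half of the characterization.

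Next I would compute the off-diagonal entries. For $i\neq j$, splitting off the $k=i$ and $k=j$ terms,
\[
(LL^{T})_{ij}=-d^{+}(i)\,a_{ji}-d^{+}(j)\,a_{ij}+\sum_{k\neq i,j}a_{ik}a_{jk},
\]
\[
(L^{T}L)_{ij}=-d^{+}(i)\,a_{ij}-d^{+}(j)\,a_{ji}+\sum_{k\neq i,j}a_{ki}a_{kj}.
\]
Subtracting and rearranging yields exactly
\[
(a_{ij}-a_{ji})\bigl(d^{+}(j)-d^{+}(i)\bigr)=\sum_{k\neq i,j}\bigl(a_{ik}a_{jk}-a_{ki}a_{kj}\bigr),
\]
which is identity~\eqref{eq:nrml-identity}. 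Hence, once the diagonal equalities are known to hold (equivalently, $\Gamma$ is balanced), the off-diagonal equalities are equivalent to the stated identity for all $i\neq j$. This gives both directions simultaneously.

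There is no genuine obstacle here: the proof is a bookkeeping exercise, and the main care needed is just the correct sign tracking and the isolation of the $k=i,j$ terms from the summations so that the remaining sums range over $k\neq i,j$. I would present the computations of the four quantities above and then conclude by observing that $L$ normal is equivalent to these diagonal and off-diagonal equalities holding simultaneously, which matches exactly the two conditions in the proposition.
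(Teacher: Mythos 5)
Your proof is correct and is essentially the same argument as the paper's: the paper invokes Grone's Condition 62 (equality of inner products of corresponding pairs of columns of $L$ and of $L^{T}$), which is precisely the entrywise statement of $LL^{T}=L^{T}L$ that you expand directly, and the resulting diagonal and off-diagonal computations match the paper's line for line.
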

\begin{proof}
Let $L$ be the Laplacian matrix of $\Gamma$.
By~\cite[Condition 62]{Grone1987}, it follows that $L$ is normal if and only if
\[
\langle L\mathbf{e}_{i},L\mathbf{e}_{j}\rangle = \langle L^{T}\mathbf{e}_{i},L^{T}\mathbf{e}_{j}\rangle,
\]
for all $i,j\in V$, where $\mathbf{e}_{i}$ denotes the $i$th column of $I_{n}$.
Note that
\[
\langle L\mathbf{e}_{i},L\mathbf{e}_{j}\rangle
=
\begin{cases}
d^{+}(i)^{2} + d^{-}(i) & \text{if $i=j$,} \\
-d^{+}(i)a_{ij} - d^{+}(j)a_{ji} + \sum_{k\neq i,j}a_{ki}a_{kj} & \text{if $i\neq j$,}
\end{cases}
\]
and
\[
\langle L^{T}\mathbf{e}_{i},L^{T}\mathbf{e}_{j}\rangle
=
\begin{cases}
d^{+}(i)^{2} + d^{+}(i) & \text{if $i=j$,} \\
-d^{+}(i)a_{ji} - d^{+}(j)a_{ij} + \sum_{k\neq i,j}a_{ik}a_{jk} & \text{if $i\neq j$.}
\end{cases}
\]
Therefore, $L$ is normal if and only if $d^{+}(i)=d^{-}(i)$ for all $i\in V$ and
\[
\left(a_{ij}-a_{ji}\right)\left(d^{+}(j) - d^{+}(i)\right) = \sum_{k\neq i,j}\left(a_{ik}a_{jk}-a_{ki}a_{kj}\right),
\]
for all $i\neq j\in V$. 
\end{proof}

Given a graph $\Gamma = \left(V,E\right)$, we define the \emph{twin splitting} of a vertex $v\in V$ as the process in which a copy of $v$ is created, denoted $v'$, such that $v$ and $v'$ are \emph{twins}, that is, $(u,v)\in E$ if and only if $(u,v')\in E$ and $(v,u)\in E$ if and only if $(v',u)\in E$.
Note that the digraph in Figure~\ref{fig:bal-non-normal} can be constructed by performing a twin splitting on two non-adjacent vertices from a dicycle of order $4$.
It is worth emphasizing that altering the digraph in this way preserves the balanced property but destroys normality.
Indeed, let $i$ and $j$ be vertices in Figure~\ref{fig:bal-non-normal} such that $i$ points at $j$, $i$ has a twin, and $j$ does not have a twin. 
It follows that the left side of~\eqref{eq:nrml-identity} is equal to $1$ and the right side of~\eqref{eq:nrml-identity} is equal to $0$, which confirms that the digraph in Figure~\ref{fig:bal-non-normal} is not normal.

It turns out that we can restore the normality of the digraph in Figure~\ref{fig:bal-non-normal} by adding a new vertex that is bidirectionally joined onto all the twin vertices, as in Figure~\ref{fig:nrml-twin-splitting}.
This process is generalized in Theorem~\ref{thm:nrml-twin-splitting}.
\begin{figure}[ht]
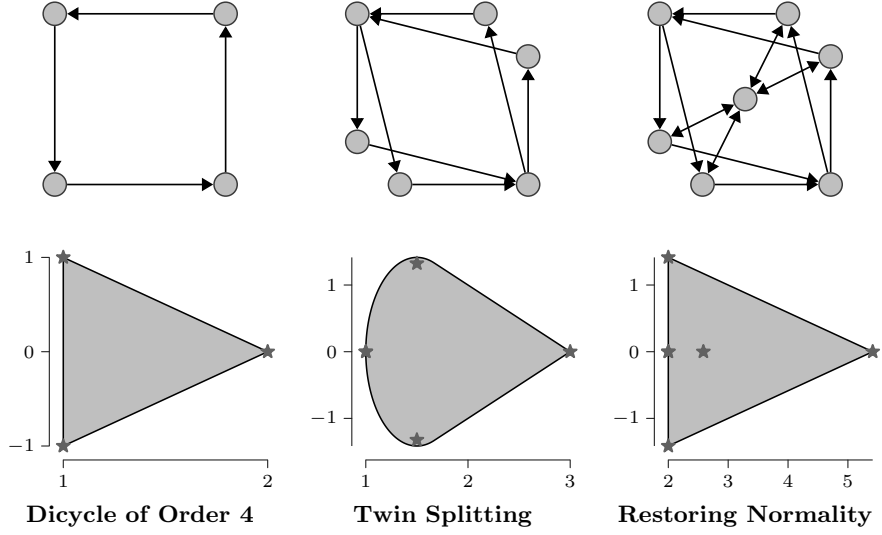

\centering
\tikzset{/my/graphic options/tall preset}
\begin{tabular}{ccc}
\drawFigure{4-cycle}{}
&
\drawFigure{twin-splitting}{}
&
\drawFigure{nrml-rest}{}
\\
\textbf{\small{Dicycle of Order 4}}
&
\textbf{\small{Twin Splitting}}
&
\textbf{\small{Restoring Normality}}
\end{tabular}
\caption{Twin splitting and restoration of normality.}
\label{fig:nrml-twin-splitting}
\end{figure}

\begin{theorem}\label{thm:nrml-twin-splitting}
Let $\Gamma=(V,E)\in\mathbb{G}$ be a dicycle of order $n\geq 4$, where $n$ is even.
Starting at any vertex in $V$, construct $\Gamma'$ by performing a twin splitting on every other vertex in the dicycle. 
Also, let $\Gamma''$ be constructed from $\Gamma'$ by adding a new vertex that is bidirectionally joined to all the twin vertices in $\Gamma'$. 
Then, $\Gamma'$ is a balanced digraph that is not normal and $\Gamma''$ is a normal digraph. 
\end{theorem}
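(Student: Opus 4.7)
The plan is to verify Proposition~\ref{prop:nrml-identity} directly, exploiting the cyclic and twin symmetries of the construction.

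First I would set up coordinates. Writing $m = n/2$, label the twinned vertices of $\Gamma'$ as $a_1, \ldots, a_m$, their twins as $a_1', \ldots, a_m'$, and the un-split vertices as $b_1, \ldots, b_m$. The edges of $\Gamma'$ inherited from the dicycle are
$$
a_j \to b_j, \quad a_j' \to b_j, \quad b_j \to a_{j+1}, \quad b_j \to a_{j+1}' \qquad (j \bmod m).
$$
Each of $a_j, a_j'$ has in-degree and out-degree $1$ and each $b_j$ has in-degree and out-degree $2$, so $\Gamma'$ is balanced. Non-normality of $\Gamma'$ follows from Proposition~\ref{prop:nrml-identity} applied to the pair $(a_j, b_j)$: the left-hand side of~\eqref{eq:nrml-identity} is $(1-0)(2-1) = 1$, while the right-hand side is $0$ because the out-neighborhoods $\{b_j\}$ and $\{a_{j+1}, a_{j+1}'\}$ are disjoint, as are the in-neighborhoods $\{b_{j-1}\}$ and $\{a_j, a_j'\}$.

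For $\Gamma''$ I would adjoin a new vertex $w$ together with bidirectional edges $w \leftrightarrow a_j$ and $w \leftrightarrow a_j'$ for every $j$. Each twin vertex gains one in-edge and one out-edge, $w$ has in-degree and out-degree $2m = n$, and the $b_j$ are unaffected, so $\Gamma''$ is balanced. For normality, the key simplifying observation is that the left-hand side of~\eqref{eq:nrml-identity} vanishes identically on $\Gamma''$: every vertex other than $w$ has out-degree $2$, and every edge incident to $w$ is bidirectional, so for every pair of distinct vertices $(i,j)$ either $d^+(i) = d^+(j)$ or $a_{ij} = a_{ji}$. The normality condition therefore reduces to
$$
\abs{\mathrm{Out}(i) \cap \mathrm{Out}(j)} \;=\; \abs{\mathrm{In}(i) \cap \mathrm{In}(j)}
$$
for every $i \neq j$, where $\mathrm{Out}$ and $\mathrm{In}$ denote out- and in-neighborhoods in $\Gamma''$.

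I would finish by tabulating the neighborhoods, namely $\mathrm{Out}(a_j) = \mathrm{Out}(a_j') = \{b_j, w\}$, $\mathrm{In}(a_j) = \mathrm{In}(a_j') = \{b_{j-1}, w\}$, $\mathrm{Out}(b_j) = \{a_{j+1}, a_{j+1}'\}$, $\mathrm{In}(b_j) = \{a_j, a_j'\}$, and $\mathrm{Out}(w) = \mathrm{In}(w) = \bigcup_j \{a_j, a_j'\}$, and then checking each symmetry type of pair: $\{w, a_l\}$ and $\{w, a_l'\}$ give $0 = 0$; $\{w, b_l\}$ gives $2 = 2$; each twin pair $\{a_j, a_j'\}$ gives $2 = 2$; distinct $a$-type pairs across different twin classes give $1 = 1$ (intersection $\{w\}$); $\{b_j, b_k\}$ with $j \neq k$ gives $0 = 0$; and mixed pairs of the form $\{a_j, b_k\}$ or $\{a_j', b_k\}$ give $0 = 0$ for every $k$ (including the adjacent cases $k = j$ and $k = j-1$). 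This exhausts all pairs, and Proposition~\ref{prop:nrml-identity} yields the normality of $\Gamma''$. The main obstacle is bookkeeping this case analysis cleanly; the reduction of the left-hand side of~\eqref{eq:nrml-identity} to zero on $\Gamma''$ is what keeps the argument short.
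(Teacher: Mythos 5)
Your proposal is correct and takes essentially the same route as the paper: both verify balance directly from the degree counts and then establish (non-)normality by checking the identity of Proposition~\ref{prop:nrml-identity} over each symmetry type of vertex pair. Your explicit observation that the left side of~\eqref{eq:nrml-identity} vanishes identically on $\Gamma''$ (and hence that normality reduces to matching common out- and in-neighborhood counts) is a slightly cleaner packaging of the same case analysis the paper carries out.
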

\begin{proof}
Let $\Gamma' = (V',E')$ and note that $\abs{V'} = 3n/2$, where $n$ vertices have twins and the other $n/2$ vertices don't have twins. 
Consider the partition of the set of vertices
\[
V' = T\sqcup V'\setminus{T},
\]
where $T$ is the set of vertices with twins and $V'\setminus{T}$ is the set of vertices without twins. 
Note that
\[
d_{\Gamma'}^{+}(i) = d_{\Gamma'}^{-}(i) = 1,
\]
for all $i\in T$ and
\[
d_{\Gamma'}^{+}(i) = d_{\Gamma'}^{-}(i) = 2,
\]
for all $i\in V'\setminus{T}$.
Therefore, the digraph $\Gamma'$ is clearly balanced.
Furthermore, for each vertex $i\in T$, there exists a vertex $j\in V'\setminus{T}$ such that $a_{ij}=1$ and $a_{ji}=0$, that is, vertex $i$ points at vertex $j$ and vertex $j$ does not point at vertex $i$.
Therefore, 
\[
\left(a_{ij}-a_{ji}\right)\left(d^{+}(j)-d^{+}(i)\right) = 1.
\]
Since $i$ and $j$ don't point at any of the same vertices, and no vertex points at both $i$ and $j$, we have
\[
\sum_{k\neq i,j}\left(a_{ik}a_{jk} - a_{ki}a_{kj}\right) = 0,
\]
and it follows that~\eqref{eq:nrml-identity} does not hold; hence, $\Gamma'$ is not normal. 

Now, let $\Gamma'' = (V'',E'')$ and consider the partition of the set of vertices
\[
V'' = T\sqcup V'\setminus{T}\sqcup\{v\},
\]
where $v$ is bidirectionally joined onto all vertices in $T$. 
Therefore,
\[
d_{\Gamma''}^{+}(i) = d_{\Gamma''}^{-}(i) = 2,
\]
for all $i\in T\sqcup V'\setminus{T}$.
Moreover, 
\[
d_{\Gamma''}^{+}(v) = d_{\Gamma''}^{-}(v) = \abs{T},
\]
and it follows that $\Gamma''$ is balanced.
It is also clear that~\eqref{eq:nrml-identity} holds for all $i\in T$ and $j\in V'\setminus{T}$ since those vertices have the same out-degree and $a_{ik}a_{jk}=a_{ki}a_{kj}=0$ for all $k\neq i,j$.

If $i,j\in T$, then the vertices have the same out-degree and there exists $k,k'\neq i,j$ such that $a_{ik}a_{jk}=1$ and $a_{k'i}a_{k'j}=1$. 
Also, the vertex $v$ satisfies $a_{iv}a_{jv}=a_{vi}a_{vj}=1$, and for all other $k\neq i,j$, we have $a_{ik}=a_{jk}=a_{ki}a_{kj}=0$.
Hence, the terms on the right side of~\eqref{eq:nrml-identity} sum to zero.
A similar result holds for vertices $i,j\in V'\setminus{T}$ and for $j=v,i\neq v$.
Therefore, both sides of~\eqref{eq:nrml-identity} are equal to zero for all $i,j\in V$, and it follows that $\Gamma''$ is normal.
\end{proof}

\subsection{Restricted-Normal Digraphs}\label{subsec:analysis-rnrml}
Let $\Gamma_{1}$ and $\Gamma_{2}$ be normal digraphs of order $n_{1}$ and $n_{2}$, respectively. 
Then, from the proof of Proposition~\ref{prop:bal-nr-ops}, it follows that $Q^{*}LQ$ is normal, where $L$ is the Laplacian matrix of $\Gamma_{1}\djoin\Gamma_{2}$ and $Q$ is a restrictor matrix of order $n=n_{1}+n_{2}$.
Moreover, by Theorem~\ref{thm:rnrml&bal}, it follows that $L$ is not normal since $\Gamma_{1}\djoin\Gamma_{2}$ is not balanced. 
Therefore, the directed join of two normal digraphs results in a restricted-normal digraph as illustrated in Figure~\ref{fig:classes}. 
Moreover, by Proposition~\ref{prop:rnrml-djoin}, the directed join of two normal digraphs is the only restricted-normal digraph that can be described by a directed join of two digraphs. 
\begin{proposition}\label{prop:rnrml-djoin}
Let $\Gamma_{1},\Gamma_{2}\in\mathbb{G}$ have order $n_{1}$ and $n_{2}$, respectively.
Then, $\Gamma_{1}\djoin\Gamma_{2}$ is a restricted-normal digraph if and only if $\Gamma_{1}$ and $\Gamma_{2}$ are normal digraphs.
\end{proposition}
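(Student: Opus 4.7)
The forward direction is essentially handled by the discussion immediately preceding the statement: if $\Gamma_1$ and $\Gamma_2$ are normal they are balanced, so the computation carried out in the proof of Proposition~\ref{prop:bal-nr-ops}(ii) shows that $Q^{*}LQ$ is block diagonal with normal blocks, while $L$ itself fails to be normal because $\Gamma_1\djoin\Gamma_2$ is not balanced (it violates the degree condition forced by Theorem~\ref{thm:rnrml&bal}). So my plan focuses on the reverse direction.

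The main idea is to repeat the computation of $Q^{*}LQ$ for $L$ the Laplacian of $\Gamma_1\djoin\Gamma_2$, using the exact restrictor matrix
\[
Q = \begin{bmatrix} Q_1 & 0 & -\frac{n_2}{\sqrt{n_1 n_2(n_1+n_2)}}\mathbf{e}^{n_1} \\ 0 & Q_2 & \frac{n_1}{\sqrt{n_1 n_2(n_1+n_2)}}\mathbf{e}^{n_2}\end{bmatrix}
\]
from the proof of Proposition~\ref{prop:bal-nr-ops}, but without assuming either $\Gamma_i$ is balanced. Using only the relations $L_1\mathbf{e}^{n_1}=0$, $L_2\mathbf{e}^{n_2}=0$, $Q_i^{T}\mathbf{e}^{n_i}=0$, and $J_{n_1\times n_2}Q_2=\mathbf{e}^{n_1}(\mathbf{e}^{n_2})^{T}Q_2=0$, I would show
\[
Q^{*}LQ = \begin{bmatrix} Q_1^{*}L_1Q_1+n_2 I_{n_1-1} & 0 & 0 \\ 0 & Q_2^{*}L_2Q_2 & 0 \\ -\gamma_1(\mathbf{e}^{n_1})^{T}L_1Q_1 & \gamma_2(\mathbf{e}^{n_2})^{T}L_2Q_2 & n_2\end{bmatrix}
\]
where $\gamma_1,\gamma_2$ are the nonzero scalars inherited from the last column of $Q$. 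Note that switching to any other restrictor matrix corresponds to a unitary similarity of $Q^{*}LQ$, so testing normality with this particular $Q$ is no loss of generality.

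Now assume $\Gamma_1\djoin\Gamma_2$ is restricted-normal, i.e.\ the displayed matrix is normal. Viewing it as a block lower triangular matrix with square diagonal blocks $A=\operatorname{diag}(Q_1^{*}L_1Q_1+n_2 I,~Q_2^{*}L_2Q_2)$ and the scalar $n_2$, I would apply the block triangular normality criterion (\cite[Lemma~2.5.2]{Horn2013}, already used in the proof of Theorem~\ref{thm:rnrml&bal}) to conclude simultaneously that (a) the off-diagonal row vanishes, forcing $(\mathbf{e}^{n_1})^{T}L_1Q_1=0$ and $(\mathbf{e}^{n_2})^{T}L_2Q_2=0$, and (b) $A$ itself is normal, which, after removing the scalar shift $n_2 I$, forces $Q_1^{*}L_1Q_1$ and $Q_2^{*}L_2Q_2$ to be normal. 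Lemma~\ref{lem:rnrml&bal} turns (a) into the statement that $\Gamma_1$ and $\Gamma_2$ are both balanced, and then Theorem~\ref{thm:rnrml&bal} combines (a) and (b) to give that $L_1$ and $L_2$ are normal, i.e.\ $\Gamma_1$ and $\Gamma_2$ are normal digraphs.

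The one step I expect to need the most care is the block-normality argument: I need to be sure that the conclusion ``$C=0$ and both diagonal blocks are normal'' really is an iff and really applies to the partition into the $(n_1+n_2-2)\times(n_1+n_2-2)$ top-left block and the $1\times 1$ bottom-right block, rather than a finer partition that could hide cancellations. Citing the standard Horn--Johnson lemma in the same form used in the proof of Theorem~\ref{thm:rnrml&bal} is enough to dispatch this, after which the remainder of the argument is bookkeeping.
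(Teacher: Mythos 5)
Your proposal is correct and follows essentially the same route as the paper: the same restrictor matrix $Q$, the same computation of $Q^{*}LQ$ as a block lower triangular matrix, the same appeal to the Horn--Johnson normality criterion for block triangular matrices, and the same use of Lemma~\ref{lem:rnrml&bal} and Theorem~\ref{thm:rnrml&bal} to convert the vanishing of the off-diagonal block and the normality of the diagonal blocks into normality of $L_{1}$ and $L_{2}$. The only differences are cosmetic: the paper runs the whole argument as a single chain of equivalences rather than splitting into two directions, and your explicit remarks that the choice of restrictor matrix is immaterial (unitary similarity) and that $L$ itself fails to be normal are points the paper leaves to the surrounding discussion.
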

\begin{proof}
Let $L_{1}$ and $L_{2}$ denote the Laplacian matrix of $\Gamma_{1}$ and $\Gamma_{2}$, respectively. 
Also, let $Q_{1}$ and $Q_{2}$ be restrictor matrices of order $n_{1}$ and $n_{2}$, respectively. 
Finally, define
\[
Q = \begin{bmatrix} Q_{1} & 0 & -\frac{n_{2}}{\sqrt{n_{2}^{2}n_{1}+n_{1}^{2}n_{2}}}\mathbf{e}^{n_{1}} \\ 0 & Q_{2} & \frac{n_{1}}{\sqrt{n_{2}^{2}n_{1}+n_{1}^{2}n_{2}}}\mathbf{e}^{n_{2}} \end{bmatrix}
\]
as a restrictor matrix of order $n=n_{1}+n_{2}$.
The Laplacian matrix of $\Gamma_{1}\djoin\Gamma_{2}$, possibly after re-ordering the vertices, can be written as
\[
L = \begin{bmatrix} L_{1} + n_{2}I_{n_{1}} & -J_{n_{1}\times n_{2}} \\ 0 & L_{2} \end{bmatrix}.
\]
Now, one can readily verify that
\[
Q^{*}LQ = \begin{bmatrix}Q_{1}^{*}L_{1}Q_{1} + n_{2}I_{n_{1}-1} & 0 & 0 \\ 0 & Q_{2}^{*}L_{2}Q_{2} & 0 \\ \frac{-n_{2}}{\sqrt{n_{2}^{2}n_{1}+n_{1}^{2}n_{2}}}\left(\mathbf{e}^{n_{1}}\right)^{T}L_{1}Q_{1} & \frac{n_{1}}{\sqrt{n_{2}^{2}n_{1}+n_{1}^{2}n_{2}}}\left(\mathbf{e}^{n_{2}}\right)^{T}L_{2}Q_{2} & n_{2} \end{bmatrix}.
\]
By~\cite[Lemma 2.5.2]{Horn2013}, $Q^{*}LQ$ is normal if and only if $Q_{1}^{*}L_{1}Q_{1}$ and $Q_{2}^{*}L_{2}Q_{2}$ are normal and $\left(\mathbf{e}^{n_{1}}\right)^{T}L_{1}Q_{1}=0$ and $\left(\mathbf{e}^{n_{2}}\right)^{T}L_{2}Q_{2}=0$.
Furthermore, by Lemma~\ref{lem:rnrml&bal}, $\left(\mathbf{e}^{n_{1}}\right)^{T}L_{1}Q_{1}=0$ and $\left(\mathbf{e}^{n_{2}}\right)^{T}L_{2}Q_{2}=0$ if and only if $\Gamma_{1}$ and $\Gamma_{2}$ are balanced.
Thus, by Theorem~\ref{thm:rnrml&bal}, $Q^{*}LQ$ is normal if and only if $L_{1}$ and $L_{2}$ are normal. 
\end{proof}

Let $\Gamma = (V,E)\in\mathbb{G}$ have order $n$.
If $n$ is square-free or $n=2m$, where $m$ is square-free, it turns out that $\Gamma$ is restricted-normal if and only if it is a directed join. 
To help understand this result, we define the \emph{imbalance} of vertex $i\in V$ by
\[
\iota(i) = d^{+}(i) - d^{-}(i).
\]
\begin{lemma}\label{lem:imb-djoin}
Let $\Gamma=(V,E)\in\mathbb{G}$ be a digraph of order $n$ that is not balanced.
Then, $\Gamma$ is a directed join of two balanced digraphs if and only if
\begin{equation}\label{eq:imb-djoin}
n\mid\left(\iota(i)-\iota(j)\right),
\end{equation}
for all $i,j\in V$.
\end{lemma}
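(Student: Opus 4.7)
The plan is to prove both directions via a direct computation of the imbalance function. The forward direction is essentially a bookkeeping exercise, while the reverse direction requires extracting the partition out of the divisibility condition.

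For the forward direction, suppose $\Gamma = \Gamma_1 \djoin \Gamma_2$ where $\Gamma_1,\Gamma_2$ are balanced digraphs of order $n_1,n_2$, respectively, with $n = n_1 + n_2$. For $i \in V_1$, every vertex of $V_2$ contributes to the out-degree of $i$ but nothing to the in-degree, so $d_{\Gamma}^+(i) = d_{\Gamma_1}^+(i) + n_2$ and $d_{\Gamma}^-(i) = d_{\Gamma_1}^-(i)$; since $\Gamma_1$ is balanced, $\iota(i) = n_2$. Symmetrically, $\iota(j) = -n_1$ for $j \in V_2$. Thus $\iota(i) - \iota(j) \in \{0, \pm n\}$ for all $i,j$, which is divisible by $n$.

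For the reverse direction, assume $n \mid (\iota(i) - \iota(j))$ for all $i,j \in V$, so every imbalance shares a common residue modulo $n$. The key observation is that $\iota(i) = d^+(i) - d^-(i)$ is an integer in $[-(n-1), n-1]$, a window of width $2n-1$; hence there can be at most two distinct imbalance values, and if two occur they must differ by exactly $n$. Moreover $\sum_{i\in V} \iota(i) = 0$ because every edge contributes $+1$ to one out-degree and $+1$ to one in-degree. If only one value $a$ appeared we would have $na = 0$, forcing $\Gamma$ balanced, contradicting the hypothesis. Therefore two values $a$ and $a-n$ both occur; let $V_1 = \{i : \iota(i) = a\}$ and $V_2 = \{i : \iota(i) = a-n\}$ with $|V_1|=n_1$, $|V_2|=n_2$. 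The sum identity $n_1 a + n_2(a-n) = 0$ yields $a = n_2$.

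The main obstacle, and the heart of the proof, is now to deduce the join structure from these imbalance values. For $i \in V_1$, write $d_{\Gamma}^+(i) = d_{\Gamma[V_1]}^+(i) + e_{12}^+(i)$ and $d_{\Gamma}^-(i) = d_{\Gamma[V_1]}^-(i) + e_{21}^-(i)$, where $e_{12}^+(i)$ counts edges from $i$ to $V_2$ and $e_{21}^-(i)$ counts edges from $V_2$ into $i$. Summing $\iota(i) = n_2$ over $i \in V_1$, and using that the imbalances within the induced subdigraph $\Gamma[V_1]$ sum to zero, gives
\[
\sum_{i \in V_1} e_{12}^+(i) - \sum_{i\in V_1} e_{21}^-(i) = n_1 n_2.
\]
Since the first sum is at most $n_1 n_2$ and the second is at least $0$, both extremes must be attained: every possible $V_1 \to V_2$ edge is present and no $V_2 \to V_1$ edge exists. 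Consequently $\Gamma = \Gamma[V_1] \djoin \Gamma[V_2]$. Finally, with $e_{12}^+(i) = n_2$ and $e_{21}^-(i) = 0$ plugged back in, the equation $\iota(i)=n_2$ forces $d_{\Gamma[V_1]}^+(i) = d_{\Gamma[V_1]}^-(i)$, so $\Gamma[V_1]$ is balanced; a symmetric argument on $V_2$ completes the proof.
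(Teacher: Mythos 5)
Your proposal is correct and follows essentially the same route as the paper: the forward direction by direct computation of the imbalances ($n_2$ on $V_1$, $-n_1$ on $V_2$), and the reverse direction by using $\abs{\iota(i)}\leq n-1$ to force exactly two imbalance values differing by $n$, the zero-sum identity to pin down $a=n_2$, and an extremal edge-count between the two parts to extract the directed-join structure and the balancedness of each part. The only differences are cosmetic (you count edges from the $V_1$ side where the paper counts from the $V_2$ side, and you spell out the at-most-two-values step slightly more explicitly).
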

\begin{proof}
Suppose that $\Gamma=\Gamma_{1}\djoin\Gamma_{2}$, where $\Gamma_{1} = (V_{1},E_{1})$ and $\Gamma_{2} = (V_{2},E_{2})$ are balanced digraphs.
Then, the imbalance of vertex $i\in V=V_{1}\sqcup V_{2}$ satisfies
\[
\iota(i) = \begin{cases} \abs{V_{2}} & \text{if $i\in V_{1}$} \\ -\abs{V_{1}} & \text{if $i\in V_{2}$} \end{cases}.
\]
Therefore,
\[
\iota(i)-\iota(j) = \begin{cases}-n & \text{if $i\in V_{2}$ and $j\in V_{1}$} \\ 0 & \text{if $i,j\in V_{1}$ or $i,j\in V_{2}$} \\ n & \text{if $i\in V_{1}$ and $j\in V_{2}$} \end{cases},
\]
and it follows that $n\mid\left(\iota(i)-\iota(j)\right)$, for all $i,j\in V$.

Conversely, suppose that $n\mid(\left(\iota(i)-\iota(j)\right)$, for all $i,j\in V$.
Since $\abs{\iota(i)}\leq(n-1)$ for all $i\in V$, it follows that
\begin{equation}\label{eq:imb-djoin1}
\iota(i)-\iota(j)\in\{-n,0,n\},
\end{equation}
for all $i,j\in V$. 
Since $\Gamma$ is not balanced and the sum of imbalances over all vertices is zero, there exists a vertex $u\in V$ such that $\iota(u)>0$ and there exists a vertex $v\in V$ such that $\iota(v)<0$. 
Thus,~\eqref{eq:imb-djoin1} implies that $\iota(u)-\iota(v)=n$, and we can partition the vertex set as $V=V_{1}\sqcup V_{2}$, where $\iota(i)=\iota(u)$ for all $i\in V_{1}$ and $\iota(i)=\iota(v)$ for all $i\in V_{2}$.
Summing imbalances, we have
\[
\iota(u)\abs{V_{1}} + \iota(v)\abs{V_{2}} = \iota(u)\abs{V_{1}} + (\iota(u)-n)\abs{V_{2}} = 0,
\]
which implies that $\iota(u)=\abs{V_{2}}$ and $(\iota(u)-n)=-\abs{V_{1}}$.

Note that the sum of the imbalances in $V_{2}$ is equal to the total out-degree minus the total in-degree of the vertices in $V_{2}$, which is also equal to the number of edges pointing from $V_{2}$ to $V_{1}$ minus the number of edges pointing from $V_{1}$ to $V_{2}$.
Since the sum of the imbalances in $V_{2}$ is equal to $-\abs{V_{1}}\abs{V_{2}}$, it follows that there are no edges pointing from $V_{2}$ to $V_{1}$ and there is an edge pointing from every vertex in $V_{1}$ to all vertices in $V_{2}$.
Hence, $\Gamma$ is a directed join of two digraphs: $\Gamma_{1} = (V_{1},E_{1})$ and $\Gamma_{2} = (V_{2},E_{2})$. 
Finally, note that the imbalance of each vertex in $V$ would be zero if all edges pointing from $V_{1}$ to $V_{2}$ were removed; thus, $\Gamma_{1}$ and $\Gamma_{2}$ are balanced.
\end{proof}

Lemma~\ref{lem:imb-djoin} shows that directed joins of balanced digraphs can be identified by their vertex imbalances. 
The connection between this result and restricted-normal digraphs will be made using the following propositions.
First, note that given a restrictor matrix $Q$ of order $n$, we denote
\begin{equation}\label{eq:projector}
P = QQ^{*} = I - \frac{1}{n}\mathbf{e}\mathbf{e}^{T}
\end{equation}
as the unique projector whose image space is $\mathbf{e}^{\perp}$ and kernel space is $\spn{\mathbf{e}}$. 
\begin{proposition}\label{prop:nrml-proj}
Let $\Gamma=(V,E)\in\mathbb{G}$ have order $n$ and Laplacian matrix $L$.
Then, $Q^{*}LQ$ is normal if and only if $PL$ is normal.
\end{proposition}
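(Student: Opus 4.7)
The plan is to rewrite $PL$ directly in terms of $M := Q^{*}LQ$ and then compare the normality conditions via straightforward matrix algebra. The two key identities to exploit are that $Q$ has orthonormal columns orthogonal to $\mathbf{e}$, so $Q^{*}Q = I_{n-1}$ and $QQ^{*} = P$, and that $L$ annihilates $\mathbf{e}$ because its row sums are zero.

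First I would observe that $L\mathbf{e} = 0$ implies $LP = L\bigl(I - \tfrac{1}{n}\mathbf{e}\mathbf{e}^{T}\bigr) = L$. Combined with $P = P^{2}$, this gives
\[
PL = P(LP) = (QQ^{*})L(QQ^{*}) = Q\bigl(Q^{*}LQ\bigr)Q^{*} = QMQ^{*}.
\]
So the question reduces to: when is $QMQ^{*}$ normal, given that $Q^{*}Q = I_{n-1}$?

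Next I would compute the two products. Using $Q^{*}Q = I_{n-1}$,
\[
(QMQ^{*})(QMQ^{*})^{*} = QMQ^{*}QM^{*}Q^{*} = QMM^{*}Q^{*},
\]
and similarly $(QMQ^{*})^{*}(QMQ^{*}) = QM^{*}MQ^{*}$. Thus $PL$ is normal if and only if $QMM^{*}Q^{*} = QM^{*}MQ^{*}$. To finish, multiply this equation on the left by $Q^{*}$ and on the right by $Q$; using $Q^{*}Q = I_{n-1}$ again on both sides, we recover $MM^{*} = M^{*}M$. Conversely, if $M$ is normal, then pre- and post-multiplying $MM^{*} = M^{*}M$ by $Q$ and $Q^{*}$ yields the equivalent identity for $PL$.

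There is no genuine obstacle here; the only subtle point is remembering that $QQ^{*} = P \neq I$ while $Q^{*}Q = I_{n-1}$, which is exactly why the "compression" $M = Q^{*}LQ$ and the "lift" $QMQ^{*} = PL$ encode the same linear operator on $\mathbf{e}^{\perp}$ and therefore share all spectral and structural properties, including normality.
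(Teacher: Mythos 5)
Your proof is correct. Every step checks out: $L\mathbf{e}=0$ gives $LP=L$, hence $PL=QQ^{*}LQQ^{*}=QMQ^{*}$ with $M=Q^{*}LQ$, and the identity $Q^{*}Q=I_{n-1}$ lets you pass back and forth between $(PL)(PL)^{*}=(PL)^{*}(PL)$ and $MM^{*}=M^{*}M$ by conjugating with $Q$ and $Q^{*}$. This is a genuinely different (and leaner) route than the paper's. The paper invokes the inner-product characterization of normality from Grone, Johnson, Sa, and Wolkowicz --- $\langle A\mathbf{x},A\mathbf{y}\rangle=\langle A^{*}\mathbf{x},A^{*}\mathbf{y}\rangle$ for all test vectors --- and then walks through a chain of equivalences, transporting the condition from $\mathbb{C}^{n-1}$ to $\mathbf{e}^{\perp}$ via the bijection $Q$ and using that the image of $L^{T}$ lies in $\mathbf{e}^{\perp}$ (the transpose of your $LP=L$). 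Your version works directly from the definition of normality and needs no external citation; the single identity $PL=QMQ^{*}$ does all the work. What the paper's bilinear-form formulation buys is that it sets up the machinery reused immediately afterward in Proposition~\ref{prop:rnrml-identity}, where the normality condition is evaluated on the specific basis $\{\mathbf{e}_{i}-\mathbf{e}_{k}\}$ of $\mathbf{e}^{\perp}$ to extract the entrywise identity~\eqref{eq:rnrm-identity}; your matrix-level argument would need to be unpacked into inner products again to serve that purpose.
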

\begin{proof}
Let $Q$ be a restrictor matrix of order $n$.
Then, by~\cite[Condition 62]{Grone1987}, $Q^{*}LQ$ is normal if and only if
\begin{equation}\label{eq:nrml-proj-p1}
\langle Q^{*}LQ\mathbf{x},Q^{*}LQ\mathbf{y}\rangle = \langle Q^{*}L^{T}Q\mathbf{x},Q^{*}L^{T}Q\mathbf{y}\rangle,
\end{equation}
for all $\mathbf{x},\mathbf{y}\in\mathbb{C}^{n-1}$.
Since $Q$ is a bijection between $\mathbb{C}^{n-1}$ and $\mathbf{e}^{\perp}$, it follows that~\eqref{eq:nrml-proj-p1} holds for all $\mathbf{x},\mathbf{y}\in\mathbb{C}^{n-1}$ if and only if 
\begin{equation}\label{eq:nrml-proj-p2}
\langle Q^{*}L\mathbf{x},Q^{*}L\mathbf{y}\rangle = \langle Q^{*}L^{T}\mathbf{x},Q^{*}L^{T}\mathbf{y}\rangle,
\end{equation}
for all $\mathbf{x},\mathbf{y}\in\mathbf{e}^{\perp}$.
Using the projector $P$ from~\eqref{eq:projector}, we can re-write~\eqref{eq:nrml-proj-p2} as
\[
\langle PL\mathbf{x},L\mathbf{y}\rangle = \langle PL^{T}\mathbf{x},L^{T}\mathbf{y}\rangle. 
\]
Furthermore, since the image space of $L^{T}$ is equal to $\mathbf{e}^{\perp}$, it follows that $Q^{*}LQ$ is normal if and only if
\begin{equation}\label{eq:nrml-proj-p3}
\langle PL\mathbf{x},L\mathbf{y}\rangle = \langle L^{T}\mathbf{x},L^{T}\mathbf{y}\rangle,
\end{equation}
for all $\mathbf{x},\mathbf{y}\in\mathbf{e}^{\perp}$.
Note that $\langle PL\mathbf{x},L\mathbf{y}\rangle = \langle PL\mathbf{x},PL\mathbf{y}\rangle$; hence,~\eqref{eq:nrml-proj-p3} holds for all $\mathbf{x},\mathbf{y}\in\mathbf{e}^{\perp}$ if and only if
\[
\langle PL\mathbf{x},PL\mathbf{y}\rangle = \langle L^{T}P\mathbf{x},L^{T}P\mathbf{y}\rangle 
\]
for all $\mathbf{x},\mathbf{y}\in\mathbb{C}^{n}$, that is, if and only if $PL$ is normal. 
\end{proof}
\begin{proposition}\label{prop:rnrml-identity}
Let $\Gamma=(V,E)\in\mathbb{G}$ have order $n$ and Laplacian matrix $L$.
Then, for any $k\in V$, $\Gamma$ is restricted-normal if and only if $\Gamma$ is not balanced and
\begin{equation}\label{eq:rnrm-identity}
\frac{\left(\iota(i)-\iota(k)\right)\left(\iota(j)-\iota(k)\right)}{n} = \langle L(\mathbf{e}_{i}-\mathbf{e}_{k}),L(\mathbf{e}_{j}-\mathbf{e}_{k})\rangle -  \langle L^{T}(\mathbf{e}_{i}-\mathbf{e}_{k}),L^{T}(\mathbf{e}_{j}-\mathbf{e}_{k})\rangle,
\end{equation}
for all $i,j\in V\setminus{\{k\}}$.
\end{proposition}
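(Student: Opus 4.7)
The plan is to combine Theorem~\ref{thm:rnrml&bal} with Proposition~\ref{prop:nrml-proj} and then verify a single matrix equation on a cleverly chosen basis of $\mathbf{e}^{\perp}$. By Theorem~\ref{thm:rnrml&bal}, $\Gamma$ is restricted-normal iff $Q^{*}LQ$ is normal and $\Gamma$ is not balanced; and by Proposition~\ref{prop:nrml-proj}, $Q^{*}LQ$ is normal iff $PL$ is normal, where $P = I - \tfrac{1}{n}\mathbf{e}\mathbf{e}^{T}$ is the real symmetric projector with image $\mathbf{e}^{\perp}$ and kernel $\spn{\{\mathbf{e}\}}$. Since $P^{*}=P$, the normality of $PL$ amounts to the matrix equation $PLL^{T}P = L^{T}PL$. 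So the task reduces to showing that, for any fixed $k$, this matrix equation is equivalent to the stated identity~\eqref{eq:rnrm-identity} holding for all $i,j \in V\setminus\{k\}$.

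Both $PLL^{T}P$ and $L^{T}PL$ annihilate $\mathbf{e}$ (since $P\mathbf{e}=0$ and $L\mathbf{e}=0$), so the matrix equation holds iff the corresponding sesquilinear forms agree on $\mathbf{e}^{\perp}\times\mathbf{e}^{\perp}$. For a fixed $k$, the vectors $\{\mathbf{e}_{i}-\mathbf{e}_{k}\colon i\in V\setminus\{k\}\}$ are $n-1$ linearly independent vectors lying in the $(n-1)$-dimensional subspace $\mathbf{e}^{\perp}$, and hence form a basis. Verifying the matrix equation on this basis is therefore equivalent to verifying it in general; this is what underlies the ``for any $k\in V$'' quantifier in the statement.

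For $\mathbf{u} = \mathbf{e}_{i}-\mathbf{e}_{k}$ and $\mathbf{v} = \mathbf{e}_{j}-\mathbf{e}_{k}$, both in $\mathbf{e}^{\perp}$, we have $P\mathbf{u} = \mathbf{u}$ and $P\mathbf{v} = \mathbf{v}$, so the left side collapses to
\[
\mathbf{u}^{*}PLL^{T}P\mathbf{v} = \mathbf{u}^{*}LL^{T}\mathbf{v} = \langle L^{T}\mathbf{u}, L^{T}\mathbf{v}\rangle,
\]
while expanding $P$ on the right gives
\[
\mathbf{u}^{*}L^{T}PL\mathbf{v} = \langle L\mathbf{u}, L\mathbf{v}\rangle - \tfrac{1}{n}\,(\mathbf{u}^{*}L^{T}\mathbf{e})(\mathbf{e}^{T}L\mathbf{v}).
\]
A direct check shows that the $j$th column sum of $L$ is $d^{+}(j)-d^{-}(j) = \iota(j)$, so $\mathbf{e}^{T}L = (\iota(1),\ldots,\iota(n))$, and the cross term evaluates to $(\iota(i)-\iota(k))(\iota(j)-\iota(k))/n$. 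Equating the two expressions and rearranging yields precisely~\eqref{eq:rnrm-identity}.

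The main obstacle is largely bookkeeping: identifying the column-sum vector of $L$ as the imbalance vector $(\iota(1),\ldots,\iota(n))$, exploiting $P|_{\mathbf{e}^{\perp}} = \mathrm{id}$ to eliminate projectors, and carrying the equivalence between ``matrix equation on $\mathbb{C}^{n}$'' and ``sesquilinear form on a basis of $\mathbf{e}^{\perp}$''. Once these are in hand the equivalence is a direct computation, and appending the non-balanced condition (already peeled off via Theorem~\ref{thm:rnrml&bal}) completes the proof.
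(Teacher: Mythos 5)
Your proposal is correct and follows essentially the same route as the paper: reduce via Theorem~\ref{thm:rnrml&bal} and Proposition~\ref{prop:nrml-proj} to a normality condition for $PL$, test the resulting sesquilinear identity on the basis $\{\mathbf{e}_{i}-\mathbf{e}_{k}\}_{i\in V\setminus\{k\}}$ of $\mathbf{e}^{\perp}$, and expand $P=I-\tfrac{1}{n}\mathbf{e}\mathbf{e}^{T}$, identifying the column sums of $L$ with the imbalances. The only cosmetic difference is that you phrase normality of $PL$ as the matrix equation $PLL^{T}P=L^{T}PL$ while the paper works directly with the form identity~\eqref{eq:nrml-proj-p3} established inside the proof of Proposition~\ref{prop:nrml-proj}; the computations coincide.
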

\begin{proof}
Let $\left\{\mathbf{x}_{i}\right\}_{i\in\alpha}$ be a basis for $\mathbf{e}^{\perp}$, where $\alpha$ is an arbitrary index set of cardinality $(n-1)$. 
Also, let $P$ be the projector from~\eqref{eq:projector}.
Then, one can readily verify that~\eqref{eq:nrml-proj-p3} implies that $Q^{*}LQ$ is normal if and only if
\[
\langle PL\mathbf{x}_{i},L\mathbf{x}_{j}\rangle = \langle L^{T}\mathbf{x}_{i},L^{T}\mathbf{x}_{j}\rangle,
\]
for all $i,j\in\alpha$.
Furthermore, the set $\left\{\mathbf{e}_{i}-\mathbf{e}_{k}\right\}_{i\in V\setminus{\{k\}}}$ clearly forms a basis for $\mathbf{e}^{\perp}$, which implies that $Q^{*}LQ$ is normal if and only if
\[
\langle PL(\mathbf{e}_{i}-\mathbf{e}_{k}),L(\mathbf{e}_{j}-\mathbf{e}_{k})\rangle = \langle L^{T}(\mathbf{e}_{i}-\mathbf{e}_{k}),L^{T}(\mathbf{e}_{j}-\mathbf{e}_{k})\rangle,
\]
for all $i,j\in V\setminus{\{k\}}$.
Since $P=I - \frac{1}{n}\mathbf{e}\mathbf{e}^{T}$, it follows that $Q^{*}LQ$ is normal if and only if~\eqref{eq:rnrm-identity} holds, for all $i,j\in V\setminus{\{k\}}$.
By Theorem~\ref{thm:rnrml&bal}, $\Gamma$ is restricted-normal if and only if $\Gamma$ is not balanced and~\eqref{eq:rnrm-identity} holds, for all $i,j\in V\setminus{\{k\}}$.
\end{proof}

We are now ready to prove that if $n$ is square-free or $n=2m$, where $m$ is square-free, then the only restricted-normal digraphs are directed joins.
\begin{theorem}\label{thm:sqfree-rnrml}
Let $\Gamma\in\mathbb{G}$ have order $n$, where $n$ is square-free or $n=2m$ with $m$ square-free. 
If $\Gamma$ is a restricted-normal digraph, then $\Gamma$ is a directed join of normal digraphs. 
\end{theorem}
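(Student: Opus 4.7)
My approach is to reduce the theorem to a divisibility statement on vertex imbalances and then apply Lemma~\ref{lem:imb-djoin} and Proposition~\ref{prop:rnrml-djoin}. The goal will be to prove that $n\mid(\iota(i)-\iota(k))$ for every pair $i,k\in V$; once this is established, the fact that $\Gamma$ is not balanced (guaranteed by Theorem~\ref{thm:rnrml&bal}) lets Lemma~\ref{lem:imb-djoin} express $\Gamma$ as a directed join $\Gamma_1\djoin\Gamma_2$ of two balanced digraphs, and Proposition~\ref{prop:rnrml-djoin} then forces each $\Gamma_i$ to be normal, which is exactly the conclusion sought.

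To attack the congruence I would fix a base vertex $k$, write $c_i:=\iota(i)-\iota(k)$, and specialize the identity in Proposition~\ref{prop:rnrml-identity} to $i=j$, obtaining
\[
\frac{c_i^2}{n}=\|L(\mathbf{e}_i-\mathbf{e}_k)\|^2-\|L^{T}(\mathbf{e}_i-\mathbf{e}_k)\|^2.
\]
Since $L$ has integer entries, the right side is an integer, and hence $n\mid c_i^2$. If $n$ is square-free then every prime factor of $n$ appears to the first power in $n$ and therefore must divide $c_i$, so $n\mid c_i$ for every $i$, and the square-free case is finished immediately.

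The main obstacle is the remaining regime, where $n=2m$ with $m$ square-free but $n$ itself is not square-free; this forces $m$ to be even, and hence $n=4m'$ with $m'$ odd and square-free. Here $n\mid c_i^2$ yields only $2m'\mid c_i$, so I would write $c_i=2m'e_i$ and try to prove that $e_i$ is even. The plan is to refine the diagonal identity by separating its integer-valued constituents: $\|L\mathbf{e}_i\|^2-\|L^{T}\mathbf{e}_i\|^2$ equals $d^{-}(i)-d^{+}(i)=-\iota(i)$ by a direct count of squared entries in column $i$ versus row $i$ of $L$, and the cross-term $\bigl(L^{T}L-LL^{T}\bigr)_{ik}$ is an integer because $L$ is. Expanding $\|L(\mathbf{e}_i-\mathbf{e}_k)\|^2-\|L^{T}(\mathbf{e}_i-\mathbf{e}_k)\|^2$ accordingly gives
\[
\frac{c_i^2}{n}=-\iota(i)-\iota(k)-2\bigl(L^{T}L-LL^{T}\bigr)_{ik},
\]
so $c_i^2/n+\iota(i)+\iota(k)$ must be an even integer. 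Substituting $c_i=2m'e_i$ converts the left side to $m'e_i^2+\iota(i)+\iota(k)$; because $c_i$ is even we have $\iota(i)\equiv\iota(k)\pmod{2}$, and since $m'$ is odd this reduces to $e_i^2\equiv 0\pmod{2}$, forcing $e_i$ to be even. Therefore $4m'=n$ divides $c_i$ for every $i$, and invoking Lemma~\ref{lem:imb-djoin} followed by Proposition~\ref{prop:rnrml-djoin} completes the argument.
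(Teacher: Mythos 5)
Your proof is correct and follows essentially the same route as the paper's: the diagonal case $i=j$ of Proposition~\ref{prop:rnrml-identity} gives $n\mid(\iota(i)-\iota(k))^2$, the square-free case is immediate, and the expansion $-\iota(i)-\iota(k)-2(\text{integer})$ supplies exactly the parity information needed to upgrade $2m'\mid c_i$ to $n\mid c_i$ in the remaining case, after which Lemma~\ref{lem:imb-djoin} and Proposition~\ref{prop:rnrml-djoin} finish as in the paper. The only cosmetic difference is that you reduce the second case to $n=4m'$ with $m'$ odd square-free, whereas the paper argues uniformly for $n=2m$ by dividing the identity by $2$ and invoking square-freeness of $m$; the two arguments are interchangeable.
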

\begin{proof}
Suppose that $\Gamma = (V,E)$ is restricted-normal and has Laplacian  matrix $L$.
Then, by Proposition~\ref{prop:rnrml-identity}, $\Gamma$ is not balanced and
\begin{equation}\label{eq:sqfree-rnrml1}
\frac{\left(\iota(i)-\iota(j)\right)^{2}}{n} = \norm{L(\mathbf{e}_{i}-\mathbf{e}_{j})}^{2} - \norm{L^{T}(\mathbf{e}_{i}-\mathbf{e}_{j})}^{2},
\end{equation}
for all $i,j\in V$.
Since the right side of~\eqref{eq:sqfree-rnrml1} is integer valued, we have $n\mid\left(\iota(i)-\iota(j)\right)^{2}$, for all $i,j\in V$.
Furthermore, if $n$ is square-free, then we have $n\mid\left(\iota(i)-\iota(j)\right)$, for all $i,j\in V$, and it follows from Lemma~\ref{lem:imb-djoin} that $\Gamma$ is a directed join. 
Moreover, since $\Gamma$ is restricted-normal, Proposition~\ref{prop:rnrml-djoin} implies that $\Gamma$ is a directed join of normal digraphs. 

Let $L=[l_{ij}]_{i,j=1}^{n}$ denote the Laplacian matrix of $\Gamma$.
Then, the right side of~\eqref{eq:sqfree-rnrml1} can be written as
\begin{align*}
 \norm{L(\mathbf{e}_{i}-\mathbf{e}_{j})}^{2} - \norm{L^{T}(\mathbf{e}_{i}-\mathbf{e}_{j})}^{2} &= \sum_{k=1}^{n}\left(l_{ki}-l_{kj}\right)^{2} - \left(l_{ik}-l_{jk}\right)^{2} \\
 &= \sum_{k=1}^{n}\left(l_{ki}^{2}-l_{ik}^{2}\right) + \left(l_{jk}^{2}-l_{kj}^{2}\right) - 2\left(l_{ki}l_{kj}-l_{ik}l_{jk}\right) \\
 &=-\iota(i) - \iota(j) - 2\sum_{k=1}^{n}\left(l_{ki}l_{kj}-l_{ik}l_{jk}\right).
\end{align*}
Therefore,~\eqref{eq:sqfree-rnrml1} can be re-written as
\begin{equation}\label{eq:sqfree-rnrml2}
\frac{\left(\iota(i)-\iota(j)\right)^{2}}{n} = -\left(\iota(i)+\iota(j)\right) - 2\sum_{k=1}^{n}\left(l_{ki}l_{kj}-l_{ik}l_{jk}\right).
\end{equation}

Now, suppose that $n=2m$, where $m$ is square-free.
Then,
\[
2\mid n\mid\left(\iota(i)-\iota(j)\right)^{2},
\]
which implies that $2\mid\left(\iota(i)-\iota(j)\right)$. 
Thus, we have $2\mid\left(\iota(i)+\iota(j)\right)$, and it follows that there exists an integer $s$ such that $2s=\left(\iota(i)+\iota(j)\right)$.
Therefore, we can re-write~\eqref{eq:sqfree-rnrml2} as
\[
\frac{\left(\iota(i)-\iota(j)\right)^{2}}{2m} = -2s - 2\sum_{k=1}^{n}\left(l_{ki}l_{kj}-l_{ik}l_{jk}\right),
\]
that is,
\[
\frac{\left(\iota(i)-\iota(j)\right)^{2}}{4m} = -s - \sum_{k=1}^{n}\left(l_{ki}l_{kj}-l_{ik}l_{jk}\right).
\]
Hence, $m\mid\left(\iota(i)-\iota(j)\right)^{2}/4$, which implies that $m\mid\left(\iota(i)-\iota(j)\right)/2$, that is, $n\mid\left(\iota(i)-\iota(j)\right)$.
Therefore, if $\Gamma$ is restricted-normal and $n=2m$, where $m$ is square-free, then $n\mid\left(\iota(i)-\iota(j)\right)$, for all $i,j\in V$, and Lemma~\ref{lem:imb-djoin} implies that $\Gamma$ is a directed join.
Moreover, since $\Gamma$ is restricted-normal, Proposition~\ref{prop:rnrml-djoin} implies that $\Gamma$ is a directed join of normal digraphs. 
\end{proof}

According to Theorem~\ref{thm:sqfree-rnrml}, $n=8$ is the smallest possible order for which there could exist restricted-normal digraphs that are not directed joins. 
In fact, $48$ of the $4349$ restricted-normal digraphs of order $8$ (see Table~\ref{tab:classes}) are not directed joins.
For example, consider the digraph in Figure~\ref{fig:rnrml-non-djoin}, which is clearly not a directed join based on the imbalances of its vertices, Proposition~\ref{prop:rnrml-djoin}, and Lemma~\ref{lem:imb-djoin}.
\begin{figure}[ht]
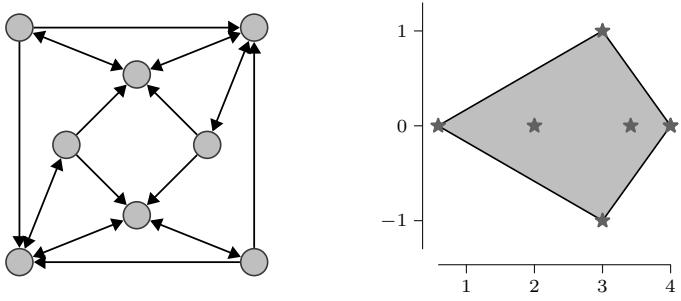

\centering
\drawFigure{rnrml-non-djoin}{y min=-1.3,y max=1.3}
\caption{A restricted-normal digraph of order $8$ that is not a directed join.}
\label{fig:rnrml-non-djoin}
\end{figure}

In what follows, we provide a construction for restricted-normal digraphs, where the order is neither square-free nor twice a square-free number, that are not directed joins. 
To this end, we make use of the following lemma which shows that restricted-normal digraphs have Laplacian matrices with a particular left-eigenvector structure. 

\begin{lemma}\label{lem:rnrm-leigvecs}
Let $\Gamma\in\mathbb{G}$ have order $n$ and Laplacian matrix $L$.
Then, $\Gamma$ is a restricted-normal digraph if and only if $\Gamma$ is not balanced and there exist orthogonal eigenvectors $\mathbf{u}_{1},\ldots,\mathbf{u}_{n-1}\in\mathbf{e}^{\perp}$ for $L^{T}$.
\end{lemma}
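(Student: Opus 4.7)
My plan is to reduce the statement to a claim about the restriction of $L^{T}$ to $\mathbf{e}^{\perp}$ and then apply the spectral theorem. The ``not balanced'' half of the biconditional is essentially free from Theorem~\ref{thm:rnrml&bal}: restricted-normal means $L$ is not normal while $Q^{*}LQ$ is, and Theorem~\ref{thm:rnrml&bal} tells us that the failure of normality of $L$, in the presence of normality of $Q^{*}LQ$, is exactly the failure of being balanced. So the real content is to show that $Q^{*}LQ$ is normal if and only if $L^{T}$ admits $n-1$ mutually orthogonal eigenvectors inside $\mathbf{e}^{\perp}$.

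The key observation is that since $L\mathbf{e}=0$, the image of $L^{T}$ is contained in $(\ker L)^{\perp}\subseteq\mathbf{e}^{\perp}$. In particular, $L^{T}$ maps $\mathbf{e}^{\perp}$ into itself, so we may speak of the operator $T:=L^{T}|_{\mathbf{e}^{\perp}}$ on the $(n-1)$-dimensional Hilbert space $\mathbf{e}^{\perp}$. Eigenvectors of $L^{T}$ lying in $\mathbf{e}^{\perp}$ are exactly eigenvectors of $T$ (with the same eigenvalues), so the condition ``$L^{T}$ has $n-1$ orthogonal eigenvectors in $\mathbf{e}^{\perp}$'' is equivalent to $T$ being unitarily diagonalizable, which by the finite-dimensional spectral theorem is equivalent to $T$ being a normal operator on $\mathbf{e}^{\perp}$.

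Next I would identify the matrix of $T$ in the orthonormal basis supplied by the columns of $Q$. For any $\mathbf{v}\in\mathbf{e}^{\perp}$, write $\mathbf{v}=Q\mathbf{c}$ with $\mathbf{c}=Q^{*}\mathbf{v}\in\mathbb{C}^{n-1}$; then $T\mathbf{v}=L^{T}Q\mathbf{c}$, which has coordinates $Q^{*}L^{T}Q\,\mathbf{c}$ in the basis. So $T$ is represented by $Q^{*}L^{T}Q=(Q^{*}LQ)^{T}$ (using that $Q$ is real). Since a complex matrix $M$ is normal if and only if $M^{T}$ is normal, $T$ is a normal operator iff $Q^{*}LQ$ is a normal matrix. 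Chaining the equivalences yields: $Q^{*}LQ$ normal $\Longleftrightarrow$ $L^{T}$ admits $n-1$ orthogonal eigenvectors inside $\mathbf{e}^{\perp}$.

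Combining this with Theorem~\ref{thm:rnrml&bal} gives the lemma: the forward direction follows because restricted-normal forces both ``not balanced'' and ``$Q^{*}LQ$ normal'', and the reverse direction observes that ``not balanced'' together with normality of $Q^{*}LQ$ means $L$ cannot itself be normal (again by Theorem~\ref{thm:rnrml&bal}), so $\Gamma$ is restricted-normal. I do not expect a serious obstacle here; the only things to be careful about are that $L^{T}$ genuinely stabilizes $\mathbf{e}^{\perp}$ (a one-line consequence of $L\mathbf{e}=0$) and that the transpose operation preserves normality, which is immediate from the definition.
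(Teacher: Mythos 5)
Your proof is correct, and it reaches the conclusion by a genuinely different middle step than the paper. The paper first passes from $Q^{*}LQ$ to the $n\times n$ matrix $PL$ via Proposition~\ref{prop:nrml-proj}, and then invokes Grone's eigenvector characterization of normality \cite{Grone1987} applied to $L^{T}P$, choosing $\mathbf{e}$ as the $n$th eigenvector and observing that the remaining $n-1$ lie in $\mathbf{e}^{\perp}$ where $L^{T}P$ and $L^{T}$ agree. You instead go the other direction: you note that $\operatorname{im}(L^{T})=(\ker L)^{\perp}\subseteq\mathbf{e}^{\perp}$, so $L^{T}$ restricts to an operator $T$ on $\mathbf{e}^{\perp}$ whose matrix in the $Q$-basis is $Q^{*}L^{T}Q=(Q^{*}LQ)^{*}$, and then apply the spectral theorem on the $(n-1)$-dimensional space. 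Both arguments hinge on the same invariance fact ($L\mathbf{e}=0$ forces $L^{T}$ to stabilize $\mathbf{e}^{\perp}$), but yours is more self-contained --- it needs neither Proposition~\ref{prop:nrml-proj} nor the external citation, only the spectral theorem and the (correct) observation that normality is preserved under transposition. The reduction of the ``not balanced'' clause to Theorem~\ref{thm:rnrml&bal} is handled identically in both proofs and is sound. One cosmetic remark: rather than asserting $Q$ is real, it is cleaner to write $Q^{*}L^{T}Q=(Q^{*}LQ)^{*}$ (valid because $L$ is real), since normality under conjugate transposition is immediate; this also covers the case of a complex restrictor matrix, which the paper's definition does not exclude.
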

\begin{proof}
Let $P$ be the projector from~\eqref{eq:projector}.
By Theorem~\ref{thm:rnrml&bal} and Proposition~\ref{prop:nrml-proj}, $\Gamma$ is restricted-normal if and only if $\Gamma$ is not balanced and $PL$ is normal. 
Furthermore, by~\cite[Condition 12 and Condition 14]{Grone1987}, $PL$ is normal if and only if there exists orthogonal eigenvectors $\mathbf{u}_{1},\ldots,\mathbf{u}_{n-1},\mathbf{u}_{n}\in\mathbb{C}^{n}$ for $L^{T}P$.
Without loss of generality, we assume that $\mathbf{u}_{n}=\mathbf{e}$. 
Hence, $\mathbf{u}_{1},\ldots,\mathbf{u}_{n-1}\in\mathbf{e}^{\perp}$ are orthogonal eigenvectors of $L^{T}$, and the result follows.
\end{proof}

Next, we provide a construction for restricted-normal digraphs of order $n^{2}$, where $n\geq 3$, that are not directed joins.
Note that we use $J_{k}$ to denote the $k\times k$ all ones matrix. 
\begin{theorem}\label{thm:rnrm-ndjoin}
Let $\Gamma\in\mathbb{G}$ have order $n^{2}$, where $n\geq 3$, and adjacency matrix
\[
A = \begin{bmatrix} & I_{n} & \cdots & I_{n} \\ J_{n}-I_{n} & & & \\ \vdots & & & \\ J_{n}-I_{n} \end{bmatrix},
\]
where all blank entries are zero.
Then, $\Gamma$ is a restricted-normal digraph that is not a directed join.
\end{theorem}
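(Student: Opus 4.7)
The plan is to verify two independent properties of $\Gamma$: that it cannot be written as a directed join, and that it is restricted-normal. For the first, I would use imbalances: reading off $A$ directly yields out-degree $n-1$ for every vertex, in-degree $(n-1)^2$ for each vertex in the first group $V_1$, and in-degree $1$ for each vertex in $V_2,\ldots,V_n$. The imbalances are thus $-(n-1)(n-2)$ on $V_1$ and $n-2$ elsewhere, so every pairwise imbalance difference equals $0$ or $\pm n(n-2)$. Since $n^2\nmid n(n-2)$ for $n\ge 3$, Lemma~\ref{lem:imb-djoin} rules out $\Gamma$ being a directed join of two balanced digraphs; combined with Proposition~\ref{prop:rnrml-djoin}, which forces any restricted-normal directed join to split into balanced factors, this shows that once restricted-normality is in hand, $\Gamma$ is not a directed join at all.

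For restricted-normality I would apply Lemma~\ref{lem:rnrm-leigvecs} and construct an orthogonal eigenbasis of $L^T$ inside $\mathbf{e}^\perp$ via a tensor-product decomposition. The key observation is that each block of $A$ is $0$, $I_n$, or $J_n-I_n$, and all three have $\mathbf{e}^n$ as an eigenvector while preserving $(\mathbf{e}^n)^\perp$. Fixing an orthonormal basis $\mathbf{y}_1=\mathbf{e}^n/\sqrt{n},\mathbf{y}_2,\ldots,\mathbf{y}_n$ of $\mathbb{C}^n$, the subspaces $S_j=\mathbb{C}^n\otimes\spn{\mathbf{y}_j}$ are mutually orthogonal, $L^T$-invariant, and collectively span $\mathbb{C}^{n^2}$. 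On $S_1$, $A$ acts through the $n\times n$ matrix $B^{(1)}$ obtained by substituting the scalar $1$ for $I_n$ and $n-1$ for $J_n-I_n$; on each $S_j$ with $j\ge 2$, $A$ acts through the matrix $B^{(2)}$ obtained by substituting $1$ for $I_n$ and $-1$ for $J_n-I_n$. A direct comparison of row and column squared-norms confirms that $B^{(2)}$ is normal, so $L^T|_{S_j}$ is normal and yields an orthonormal eigenbasis of $S_j$; and since $\mathbf{y}_j\perp\mathbf{e}^n$, the entire $S_j$ sits inside $\mathbf{e}^\perp$.

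The main obstacle is handling $S_1\cap\mathbf{e}^\perp$, since $B^{(1)}$ is \emph{not} normal. Under the identification $\mathbf{x}\otimes\mathbf{e}^n\leftrightarrow\mathbf{x}$, this subspace corresponds to $(\mathbf{e}^n)^\perp\subset\mathbb{C}^n$, and I would verify that an orthogonal eigenbasis still exists after restriction by writing out the eigenspaces of $(B^{(1)})^T$ explicitly: eigenvalue $n-1$ has eigenvector $(n-1,1,\ldots,1)^T$ (not orthogonal to $\mathbf{e}^n$, so discarded), eigenvalue $-(n-1)$ has eigenvector $(n-1,-1,\ldots,-1)^T\in(\mathbf{e}^n)^\perp$, and the kernel $\{(0,x_2,\ldots,x_n)^T:\sum_{i\ge 2}x_i=0\}$ is an $(n-2)$-dimensional subspace of $(\mathbf{e}^n)^\perp$. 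A short inner-product check shows $(n-1,-1,\ldots,-1)^T$ is orthogonal to every kernel vector; together they span $(\mathbf{e}^n)^\perp$. Splicing this orthogonal basis of $S_1\cap\mathbf{e}^\perp$ with the orthonormal bases from the $S_j$ for $j\ge 2$ produces the required orthogonal basis of $L^T$-eigenvectors in $\mathbf{e}^\perp$, so $\Gamma$ is restricted-normal by Lemma~\ref{lem:rnrm-leigvecs}, which together with the first paragraph completes the proof.
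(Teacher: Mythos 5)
Your proposal is correct, and it reaches the conclusion by a genuinely different organization of the eigenvector computation than the paper's proof. The paper works directly on $A^{T}$: it first shows $A\mathbf{x}=\mathbf{e}^{n^{2}}$ is solvable so that the $n(n-2)$-dimensional null space of $A^{T}$ lies in $\mathbf{e}^{\perp}$, and then solves the eigenvector equation for nonzero eigenvalues by hand, reducing it to the spectral decomposition of $J_{n}-I_{n}$ and producing the $2n$ vectors $\left[\pm\sqrt{(n-1)\lambda_{k}}\,\mathbf{v}_{k};\mathbf{v}_{k};\cdots;\mathbf{v}_{k}\right]$, of which $2n-1$ are perpendicular to $\mathbf{e}$; the total count $n(n-2)+2n-1=n^{2}-1$ then feeds into Lemma~\ref{lem:rnrm-leigvecs}. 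You instead exploit the fact that every block of $A$ has $\mathbf{e}^{n}$ as an eigenvector and preserves $(\mathbf{e}^{n})^{\perp}$, decomposing $\mathbb{C}^{n^{2}}$ into the $A^{T}$-invariant slices $S_{j}=\mathbb{C}^{n}\otimes\spn{\mathbf{y}_{j}}$ and reducing everything to the two $n\times n$ matrices $B^{(1)}$ and $B^{(2)}$. This buys a cleaner bookkeeping: the null space of $A^{T}$ is absorbed automatically into the kernels of $B^{(1)T}$ and $B^{(2)T}$ (which together have dimension $(n-2)+(n-1)(n-2)=n(n-2)$, matching the paper's nullity computation), the slices $S_{2},\ldots,S_{n}$ sit entirely inside $\mathbf{e}^{\perp}$, and only the single non-normal piece $B^{(1)}$ requires an explicit eigenvector calculation. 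The paper's route, by contrast, delivers the eigenvalues of $L^{T}$ explicitly. Your imbalance argument for the non-directed-join claim (values $-(n-1)(n-2)$ and $n-2$, differences $0$ or $\pm n(n-2)$, and $n^{2}\nmid n(n-2)$ for $n\geq 3$, combined with Proposition~\ref{prop:rnrml-djoin} and Lemma~\ref{lem:imb-djoin}) is exactly the paper's. One small repair is needed: equality of row and column squared-norms is a necessary but not sufficient condition for normality (the paper's own Figure~\ref{fig:bal-non-normal} is a balanced, hence row/column-sum-matched, non-normal example), so that check does not by itself establish that $B^{(2)}$ is normal. The claim is nonetheless true and immediate, since $B^{(2)}=\left[\begin{smallmatrix}0 & \mathbf{e}^{T}\\ -\mathbf{e} & 0\end{smallmatrix}\right]$ is real skew-symmetric; replace the norm comparison with that observation (or with a full computation of $B^{(2)}B^{(2)T}=B^{(2)T}B^{(2)}=\diag\left(n-1,\,J_{n-1}\right)$) and the argument is complete.
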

\begin{proof}
Note that the Laplacian matrix of $\Gamma$ is equal to
\[
L = (n-1)I_{n^{2}} - A.
\]
Let $\mathbf{x}_{1},\ldots,\mathbf{x}_{n}\in\mathbb{C}^{n}$ satisfy
\[
\left(J_{n}-I_{n}\right)\mathbf{x}_{1} = \mathbf{e}^{n}
~\text{and}~
\mathbf{x}_{2}+\cdots+\mathbf{x}_{n}=\mathbf{e}^{n}.
\]
Then, the vector $\mathbf{x} = \left[\mathbf{x}_{1},\cdots,\mathbf{x}_{n}\right]^{T}$ satisfies $A\mathbf{x} = \mathbf{e}^{n^{2}}$. 
Hence, every vector in the null space of $A^{T}$ is orthogonal to $\mathbf{e}^{n^{2}}$. 
Furthermore, the nullity of $A^{T}$ is equal to $n(n-2)$, which implies the existence of $n(n-2)$ orthogonal eigenvectors for $L^{T}$, corresponding to the eigenvalue $(n-1)$, that are perpendicular to $\mathbf{e}^{n^{2}}$.

Now, let $\mathbf{x}_{1},\ldots,\mathbf{x}_{n}\in\mathbb{C}^{n}$ be selected so that $\mathbf{x} = \left[\mathbf{x}_{1},\cdots,\mathbf{x}_{n}\right]^{T}$ is an eigenvector for $A^{T}$ corresponding to a non-zero eigenvalue $\lambda$. 
Then,
\begin{equation}\label{eq:rnrm-ndjoin1}
A^{T}\mathbf{x} =
\begin{bmatrix} (J_{n}-I_{n})\mathbf{x}_{2} + \cdots + (J_{n}-I_{n})\mathbf{x}_{n} \\ \mathbf{x}_{1} \\ \vdots \\ \mathbf{x}_{1} \end{bmatrix}
=
\lambda
\begin{bmatrix} \mathbf{x}_{1} \\ \mathbf{x}_{2} \\ \vdots \\ \mathbf{x}_{n} \end{bmatrix},
\end{equation}
which implies that $\mathbf{x}_{2}=\cdots=\mathbf{x}_{n} = \lambda^{-1}\mathbf{x}_{1}$ and
\begin{equation}\label{eq:rnrm-ndjoin2}
(J_{n}-I_{n})\mathbf{x}_{1} = \frac{\lambda^{2}}{n-1}\mathbf{x}_{1}.
\end{equation}
Hence, $\mathbf{x}_{1}$ is an eigenvector of $(J_{n}-I_{n})$. 

Note that $(J_{n}-I_{n})$ is a symmetric matrix with eigenvalues
\[
\lambda_{1}=(n-1),\lambda_{2}=-1,\ldots,\lambda_{n}=-1
\]
and corresponding orthogonal eigenvectors that we denote by $\mathbf{v}_{1},\ldots,\mathbf{v}_{n}$.
Also,~\eqref{eq:rnrm-ndjoin1} and~\eqref{eq:rnrm-ndjoin2} imply that
\[
\mu_{2k-1}=+\sqrt{(n-1)\lambda_{k}},~\mu_{2k}=-\sqrt{(n-1)\lambda_{k}}
\]
are eigenvalues of $A^{T}$ with corresponding orthogonal eigenvectors
\[
\mathbf{u}_{2k-1} = \begin{bmatrix}\mu_{2k-1}\mathbf{v}_{k} \\ \mathbf{v}_{k} \\ \vdots \\ \mathbf{v}_{k} \end{bmatrix},
~\mathbf{u}_{2k} = \begin{bmatrix}\mu_{2k}\mathbf{v}_{k} \\ \mathbf{v}_{k} \\ \vdots \\ \mathbf{v}_{k} \end{bmatrix},
\]
for $k=1,2,\ldots,n$.

Since $\mathbf{v}_{1}$ is a constant multiple of $\mathbf{e}^{n}$, it follows that $\mathbf{u}_{2},\ldots,\mathbf{u}_{2n}$ are perpendicular to $\mathbf{e}^{n^{2}}$.
Furthermore, $\mathbf{u}_{2},\ldots,\mathbf{u}_{2n}$ are in the column space of $A$, so these vectors are orthogonal to every null vector of $A^{T}$. 
Therefore, there are $n^{2}-1$ orthogonal eigenvectors of $L^{T}$ that are perpendicular to $\mathbf{e}^{n^{2}}$, and Lemma~\ref{lem:rnrm-leigvecs} implies that $\Gamma$ is restricted-normal. 
Furthermore, since $\Gamma$ is clearly not balanced and its imbalances don't satisfy~\eqref{eq:imb-djoin}, Proposition~\ref{prop:rnrml-djoin} and Lemma~\ref{lem:imb-djoin} imply that $\Gamma$ is not a directed join.
\end{proof}

Next, we use the Kronecker product to construct new restricted-normal digraphs of order $nk$, where $k\geq 1$, from old restricted-normal digraphs of order $n$.
\begin{theorem}\label{thm:rnrm-tprod}
Let $\Gamma=(V,E)\in\mathbb{G}$ be a restricted-normal digraph of order $n$ with adjacency matrix $A$.
For $k\geq 1$, the digraph $\hat{\Gamma}=(\hat{V},\hat{E})\in\mathbb{G}$ associated with the adjacency matrix $A\otimes J_{k}$ is a restricted-normal digraph of order $nk$. 
Furthermore, $\hat{\Gamma}$ is a directed join if and only if $\Gamma$ is a directed join. 
\end{theorem}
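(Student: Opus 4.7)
The plan is to work directly with the Laplacian $\hat L = k(D\otimes I_k) - A\otimes J_k$, where $D=\diag(d^+_\Gamma(1),\ldots,d^+_\Gamma(n))$, and to exploit the block Kronecker structure. First I would observe that every vertex $(i,s)\in\hat V$ inherits out-degree $kd^+_\Gamma(i)$ and in-degree $kd^-_\Gamma(i)$ (the all-ones block $J_k$ contributes a factor of $k$ to each row and column sum of the corresponding block of $A$), so $\iota_{\hat\Gamma}(i,s)=k\iota_\Gamma(i)$. Since the restricted-normal hypothesis on $\Gamma$ rules out balancedness, the same holds for $\hat\Gamma$.

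For restricted-normality of $\hat\Gamma$, my plan is to invoke Lemma~\ref{lem:rnrm-leigvecs} by exhibiting $nk-1$ mutually orthogonal eigenvectors of $\hat L^T$ inside $(\mathbf{e}^{nk})^\perp$. The key identities are $J_k\mathbf{e}^k=k\mathbf{e}^k$ and $J_k\mathbf{w}=0$ for $\mathbf{w}\perp\mathbf{e}^k$, which yield
\[
\hat L^T(\mathbf{e}_i\otimes\mathbf{w}) = kd^+_\Gamma(i)(\mathbf{e}_i\otimes\mathbf{w}), \qquad \hat L^T(\mathbf{u}\otimes\mathbf{e}^k) = k(L^T\mathbf{u})\otimes\mathbf{e}^k.
\]
Choosing an orthogonal basis $\mathbf{w}_1,\ldots,\mathbf{w}_{k-1}$ of $(\mathbf{e}^k)^\perp$ then gives $n(k-1)$ pairwise orthogonal eigenvectors $\mathbf{e}_i\otimes\mathbf{w}_m$ lying in $(\mathbf{e}^{nk})^\perp$. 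On the other summand, since $\Gamma$ is restricted-normal, Lemma~\ref{lem:rnrm-leigvecs} supplies $n-1$ orthogonal eigenvectors $\mathbf{u}_1,\ldots,\mathbf{u}_{n-1}$ of $L^T$ in $(\mathbf{e}^n)^\perp$, whose lifts $\mathbf{u}_j\otimes\mathbf{e}^k$ are eigenvectors of $\hat L^T$ and again lie in $(\mathbf{e}^{nk})^\perp$. Cross-orthogonality between the two families is immediate because their second tensor factors are orthogonal ($\mathbf{e}^k$ versus a vector in $(\mathbf{e}^k)^\perp$), producing the needed $nk-1$ vectors.

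For the directed-join equivalence, I would combine Proposition~\ref{prop:rnrml-djoin} with Lemma~\ref{lem:imb-djoin}. Since both $\Gamma$ and $\hat\Gamma$ are restricted-normal but not balanced, any directed-join decomposition of either must have normal (hence balanced) summands, so the divisibility criterion from Lemma~\ref{lem:imb-djoin} applies to both: $\Gamma$ is a directed join iff $n\mid(\iota_\Gamma(i)-\iota_\Gamma(j))$ for all $i,j\in V$, and $\hat\Gamma$ is a directed join iff $nk\mid(\iota_{\hat\Gamma}(u)-\iota_{\hat\Gamma}(v))$ for all $u,v\in \hat V$. The identity $\iota_{\hat\Gamma}(i,s)=k\iota_\Gamma(i)$ translates the latter divisibility into the former, closing the equivalence.

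The main obstacle is the bookkeeping for the eigenvector construction, particularly confirming that the two tensor-product families together span an $(nk-1)$-dimensional orthogonal subspace of $(\mathbf{e}^{nk})^\perp$; once the Kronecker identities above are in hand, the rest is a direct application of Lemma~\ref{lem:rnrm-leigvecs} and the imbalance bookkeeping supplied by Lemma~\ref{lem:imb-djoin}.
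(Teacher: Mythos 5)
Your proposal is correct and follows essentially the same route as the paper: the same Kronecker eigenvector families $\mathbf{u}_j\otimes\mathbf{e}^{k}$ and $\mathbf{e}_i\otimes\mathbf{w}_m$ verified against Lemma~\ref{lem:rnrm-leigvecs}, and the same imbalance scaling $\iota_{\hat\Gamma}=k\,\iota_\Gamma$ combined with Proposition~\ref{prop:rnrml-djoin} and Lemma~\ref{lem:imb-djoin} for the directed-join equivalence.
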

\begin{proof}
Note that the Laplacian matrix of $\Gamma$ can be written as $L = D - A$, where $D=\diag\left(d^{+}(1),\ldots,d^{+}(n)\right)$.
Also, the vertex set of $\hat{\Gamma}$ can be written as $\hat{V} = \left\{i_{j}\colon~i\in V,~j=1,\ldots,k\right\}$ and the Laplacian matrix of $\hat{\Gamma}$ is equal to
\[
\hat{L} = k\left(D\otimes I_{k}\right) - \left(A\otimes J_{k}\right).
\]
Since the imbalance of vertex $i_{j}\in\hat{V}$ is equal to the corresponding column sum of $\hat{L}$, we have
\begin{equation}\label{eq:rnrm-tprod1}
\iota_{\hat{\Gamma}}(i_{j}) = kd_{\Gamma}^{+}(i) - kd_{\Gamma}^{-}(i) = k\cdot\iota_{\Gamma}(i),
\end{equation}
for all $i\in V$ and $j=1,\ldots,k$.

Now, since $\Gamma$ is a restricted-normal digraph, Lemma~\ref{lem:rnrm-leigvecs} implies that there exist orthogonal eigenvectors $\mathbf{u}_{1},\ldots,\mathbf{u}_{n-1}$ for $L^{T}$ that are perpendicular to $\mathbf{e}^{n}$. 
Therefore, for $i=1,\ldots,n-1$, we have
\begin{align*}
\hat{L}^{T}\left(\mathbf{u}_{i}\otimes\mathbf{e}^{k}\right) &= k\left(D^{T}\otimes I_{k}\right)\left(\mathbf{u}_{i}\otimes\mathbf{e}^{k}\right) - \left(A^{T}\otimes J_{k}\right)\left(\mathbf{u}_{i}\otimes\mathbf{e}^{k}\right) \\
&= k\left(D^{T}\mathbf{u}_{i}\otimes\mathbf{e}^{k}\right) - k\left(A^{T}\mathbf{u}_{i}\otimes\mathbf{e}^{k}\right) \\
&= k\left(L^{T}\mathbf{u}_{i}\otimes\mathbf{e}^{k}\right) \\
&=k\lambda_{j}\left(\mathbf{u}_{i}\otimes\mathbf{e}^{k}\right),
\end{align*}
where $\lambda_{i}$ is the eigenvalue of $L^{T}$ corresponding to the eigenvector $\mathbf{u}_{i}$.
It follows from the mixed-product property of Kronecker producst that the eigenvectors $\left(\mathbf{u}_{i}\otimes\mathbf{e}^{k}\right)$ are orthogonal, for $i=1,\ldots,n-1$, and are perpendicular to $\mathbf{e}^{nk}=\left(\mathbf{e}^{n}\otimes\mathbf{e}^{k}\right)$.

Next, let $\mathbf{v}_{1},\ldots,\mathbf{v}_{k-1}$ denote the orthogonal eigenvectors of $J_{k}$ corresponding to the zero eigenvalue.
Also, let $\mathbf{e}_{1},\ldots,\mathbf{e}_{n}$ denote the column vectors of $I_{n}$, which are eigenvectors of $D^{T}$ corresponding to the eigenvalues $d^{+}(1),\ldots,d^{+}(n)$, respectively.
Then, for $i'=1,\ldots,n$ and $j=1,\ldots,k-1$, we have
\begin{align*}
\hat{L}^{T}\left(\mathbf{e}_{i'}\otimes\mathbf{v}_{j}\right) &= k\left(D^{T}\otimes I_{k}\right)\left(\mathbf{e}_{i'}\otimes\mathbf{v}_{j}\right) - \left(A^{T}\otimes J_{k}\right)\left(\mathbf{e}_{i'}\otimes\mathbf{v}_{j}\right) \\
&= k\left(D^{T}\mathbf{e}_{i'}\otimes\mathbf{v}_{j}\right) - \left(A^{T}\mathbf{e}_{i'}\otimes J_{k}\mathbf{v}_{j}\right) \\
&=k\left(d^{+}(i')\mathbf{e}_{i'}\otimes\mathbf{v}_{j}\right) - \left(A^{T}\mathbf{e}_{i'}\otimes 0\right) \\
&= kd^{+}(i')\left(\mathbf{e}_{i'}\otimes\mathbf{v}_{j}\right).
\end{align*}
Again, by the mixed-product property of Kronecker products, it follows that the eigenvectors $\left(\mathbf{e}_{i'}\otimes\mathbf{v}_{j}\right)$ are orthogonal, for $i'=1,\ldots,n$ and $j-1,\ldots,k-1$, and are perpendicular to $\mathbf{e}^{nk}$.
Moreover, since $\mathbf{v}_{1},\ldots,\mathbf{v}_{k-1}$ are perpendicular to $\mathbf{e}^{k}$, it follows that the eigenvectors
\[
\left(\mathbf{u}_{i}\otimes\mathbf{e}^{k}\right),~\left(\mathbf{e}_{i'}\otimes\mathbf{v}_{j}\right)
\]
are orthogonal, for $i=1,\ldots,n-1$, $i'=1,\ldots,n$, and $j=1,\ldots,k-1$.

Therefore, there are $nk-1$ orthogonal eigenvectors of $\hat{L}^{T}$ that are perpendicular to $\mathbf{e}^{nk}$, and Lemma~\ref{lem:rnrm-leigvecs} implies that $\hat{\Gamma}$ is restricted-normal.
Furthermore, by~\eqref{eq:rnrm-tprod1}, Proposition~\ref{prop:rnrml-djoin}, and Lemma~\ref{lem:imb-djoin}, $\Gamma$ is a directed join if and only if $\hat{\Gamma}$ is a directed join.
\end{proof}

We conclude this section by noting that the digraph in Figure~\ref{fig:rnrml-non-djoin} along with Theorems~\ref{thm:sqfree-rnrml}, ~\ref{thm:rnrm-ndjoin}, and~\ref{thm:rnrm-tprod} imply that there exist restricted-normal digraphs that are not directed joins if and only if the order is neither square-free nor twice a square-free number. 
Indeed, the natural numbers can be partitioned into square-free and non-square-free numbers.
Furthermore, the non-square-free numbers can be partitioned into the forms $kn^{2}$ and $4k$, where $n\geq 3$ and $k\geq 1$ is square-free.
Note that every number of the form $4k$ can be written as $8$ times an odd square-free number or twice an even square-free number.
All restricted-normal digraphs with an order that is square-free or twice a square-free number are directed joins by Theorem~\ref{thm:sqfree-rnrml}.
For all other possible orders a restricted-normal
digraph that is not a directed join can be constructed
either by using
Theorem~\ref{thm:rnrm-ndjoin} and Theorem~\ref{thm:rnrm-tprod}, in the case
of order $kn^{2}$ with $n\geq 3$ and $k\geq 1$ square-free, or by using
Figure~\ref{fig:rnrml-non-djoin} and Theorem~\ref{thm:rnrm-tprod},
in the case of order $8k$ where $k\geq 1$ is an odd square-free number.

\section{Conclusion}
The restricted numerical range is a novel tool for characterizing digraphs and studying their algebraic connectivity. 
In~\cite{Cameron2020_RNR}, digraphs with a restricted numerical range as a degenerate polygon, that is, a point or a line segment, are completely  described. 
In this article, we extended these results to include digraphs whose restricted numerical range is a non-degenerate convex polygon in the complex plane.
We refer to digraphs whose restricted numerical range is a degenerate or non-degenerate convex polygon in the complex plane as polygonal.

In Section~\ref{sec:comp}, we gave computational methods for finding polygonal digraphs, and showed that these digraphs can be partitioned into three classes: normal, restricted-normal, and pseudo-normal digraphs, all of which are closed under the digraph complement.
In Section~\ref{subsec:analysis-nrml}, we noted that normal digraphs include any digraph whose Laplacian matrix can be written as a circulant matrix.
Moreover, Proposition~\ref{prop:bal-nr-ops} and Theorem~\ref{thm:rnrml&bal} show that both the disjoint union and the bidirectional join of two normal digraphs results in another normal digraph.
Also, Theorem~\ref{thm:nrml-twin-splitting} provides a method for constructing normal digraphs that cannot be described as the disjoint union or bidirectional join of two (non-null) normal digraphs. 

Proposition~\ref{prop:bal-nr-ops} and Theorem~\ref{thm:rnrml&bal} also show that the directed join of two normal digraphs results in a restricted-normal digraph.
Moreover, Theorem~\ref{thm:sqfree-rnrml} shows that when the order is square-free or twice a square-free number, the only restricted-normal digraphs are directed joins of two normal digraphs.
Finally, Figure~\ref{fig:rnrml-non-djoin} along with Theorems~\ref{thm:rnrm-ndjoin} and~\ref{thm:rnrm-tprod} provide a construction for restricted-normal digraphs that are not directed joins when the order is neither square-free nor twice a square-free number.

Future research includes further investigation of the subclass of restricted-normal digraphs that are not directed joins and the class of pseudo-normal digraphs.



\end{document}